\title{Operad on the free monoid on species}
\author{}
\date{October 2023}
\definecolor{ForestGreen}{RGB}{34,139,34}
\theoremstyle{plain}
\newtheorem{theorem}{Theorem}[section]
\newtheorem{prop}[theorem]{Proposition}
\newtheorem{lem}[theorem]{Lemma}
\newtheorem{definition}[theorem]{Definition}
\newtheorem{lemma}[theorem]{Lemma}
\newtheorem{remark}[theorem]{Remark}
\newtheorem{proposition}[theorem]{Proposition}
\newtheorem{notation}[theorem]{Notation}
\theoremstyle{definition}
\newtheorem{example}[theorem]{Example}
\numberwithin{equation}{section}
\font \eightrm=cmr8
\newcommand{\mop}[1]{\mathop{\hbox {\rm #1} }}
\newcommand{\smop}[1]{\mathop{\hbox {\eightrm #1} }}
\long\def\ignore#1{}
\def \srestr#1{\mathstrut_{\scriptstyle |}\hbox to
-1.5pt{}\raise-4pt\hbox{$\hskip 1pt\scriptscriptstyle #1$}}
\def\shuffle{\sqcup\mathchoice{\mkern-7mu}{\mkern-7mu}{\mkern-3.2mu}{\mkern-3.8mu}\sqcup}
\newcommand{\sh}{{{\scriptstyle{\shuffle}}}}
\def\L1#1{L^1(#1)}
\def\L#1#2{L^{#1}(#2)}
\newcommand{\Ss}{\mathsf{Sp}} 
\newcommand{\tp}{\mathbf{p}} 
\newcommand{\tq}{\mathbf{q}}
\newcommand{\taa}{\mathbf{a}} 
\newcommand{\tc}{\mathbf{c}}
\newcommand{\trr}{\mathbf{r}} 
\newcommand{\tm}{\mathbf{m}} 
\newcommand{\sq}{{\scriptstyle\square}}
\newcommand{\wE}{\mathbf{E}} 
\newcommand{\wL}{\mathbf{L}} 
\newcommand{\tPi}{\mathbf{\Pi}} 
\newcommandx{\unsure}[2][1=]{\todo[linecolor=red,backgroundcolor=red!25,bordercolor=red,#1]{#2}}
\def\lef({\left(}
\def\rig){\right)}
\def\preoperad{
 \begin{picture}(709,194) (31,-159)
    \SetWidth{1.0}
    \SetColor{Black}
    \Line(96,-30)(64,34)
    \Line(96,-30)(96,34)
    \Line(96,-30)(128,34)
    \Line(96,-30)(112,-94)
    \Line(112,-94)(128,-30)
    \Line(48,-94)(96,-158)
    \Line(80,-94)(96,-158)
    \Line(112,-94)(96,-158)
    \Line(144,-94)(96,-158)
    \Line(48,-94)(96,-158)
    \Line(48,-94)(96,-158)
    \Line(320,-94)(272,-158)
    \Line(288,-94)(272,-158)
    \Line(256,-94)(272,-158)
    \Line(224,-94)(272,-158)
    \Line(288,-94)(304,-30)
    \Line(272,-30)(288,-94)
    \Line(272,-30)(304,34)
    \Line(272,-30)(272,34)
    \Line(272,-30)(240,34)
    \Line(480,-94)(496,-158)
    \Line(704,-30)(656,-94)
    \Line(496,-30)(480,-94)
    \Line(464,-30)(480,-94)
    \Line(432,-30)(480,-94)
    \Line(496,-30)(528,34)
    \Line(496,-30)(496,34)
    \Line(496,-30)(464,34)
    \Line(672,-158)(688,-94)
    \Line(656,-94)(672,-158)
    \Line(496,-158)(512,-94)
    \Line(528,-30)(480,-94)
    \Line(672,-30)(656,-94)
    \Line(640,-30)(656,-94)
    \Line(608,-30)(656,-94)
    \Line(672,-30)(704,34)
    \Line(672,-30)(672,34)
    \Line(672,-30)(640,34)
    \Bezier[dash,dsize=10](32,-78)(96,-94)(128,-94)(176,-78)
    \Bezier[dash,dsize=10](224,-14)(256,-30)(288,-30)(320,-14)
    \Bezier[dash,dsize=10](400,-78)(464,-94)(496,-94)(544,-78)
    \Bezier[dash,dsize=10](592,-14)(656,-30)(688,-30)(736,-14)
    \Line(160,-126)(192,-126)
    \Line(368,-126)(400,-126)
    \Line(368,-142)(400,-142)
    \Line(560,-126)(592,-126)
    \Text(92,-105)[lb]{\Huge{\Black{$s$}}}
    \Text(77,-45)[lb]{\Huge{\Black{$t$}}}
    \Text(267,-105)[lb]{\Huge{\Black{$s$}}}
    \Text(252,-45)[lb]{\Huge{\Black{$t$}}}
    \Text(464,-110)[lb]{\Huge{\Black{$t$}}}
    \Text(478,-40)[lb]{\Huge{\Black{$s$}}}
    \Text(640,-110)[lb]{\Huge{\Black{$t$}}}
    \Text(655,-40)[lb]{\Huge{\Black{$s$}}}
  \end{picture}

}
\begin{document}
\title{On the free commutative monoid over a positive operad}
\author{Dominique Manchon, Hedi Regeiba, Imen Rjaiba, Yannic Vargas}
\address[D. Manchon]{Laboratoire de Math\'ematiques Blaise Pascal, CNRS - Universit\'e Clermont-Auvergne, 3 place Vasar\'ely, CS 60026, 63178 Aubi\`ere, France}
\email{Dominique.Manchon@uca.fr}
\address[H. Regeiba]{Laboratory Mathematics and Applications of Gabes, LR 17 ES 11, Erriadh City  6072, Zrig, Gabes, Faculty of sciences  of Gabes, Tunisia} \email{rejaibahedi@gmail.com}
\address[I. Rjaiba]{Laboratory Mathematics and Applications of Gabes, LR 17 ES 11, Erriadh City  6072, Zrig, Gabes, Faculty of sciences  of Gabes, Tunisia} 
\email{imen.rjaiba95@gmail.com}
\address[Y. Vargas]{CUNEF Universidad, Department of Mathematics, C. de Almansa, 101, 28040 Madrid, Spain.}
\email{yannic.vargas@cunef.edu}
\begin{abstract}
We study algebraic structures on the free commutative twisted algebra generated by a positive operad $\tq$, in the framework of vector species. Given a nonunital commutative twisted algebra structure $\mu$ on $\tq$, we introduce the notion of $\mu$-compatible operad structure, leading to a nonunital operad structure on $\wE \circ \tq$, where $\wE$ stands for the exponential species. Next, we define nested pre-Lie operads (NPL-operads), a weak form of the notion of operad, in which the nested associativity axiom is weakened down to a nested pre-Lie condition. This structure is new up to our knowledge. Several constructions of NPL-operads are presented. Finally, we define algebras over a NPL-operad, based on the notion of polynomial functions.
\end{abstract}

\maketitle{}
  \noindent \textbf{Keywords:} species, operads, pre-Lie, polynomial maps.

  \noindent \textbf{MSC Classification:} 18M80, 70G65, 47H60.

\section{\bf{Introduction}}\label{sec:1}
PROPs (for PROducts and Permutations) were introduced in category theory by J. F. Adams and S. Mac Lane to encode operations with several inputs and outputs. J. M. Boardman and R. M. Vogt introduced this notion in topology \cite{BV1968}. J. P. May coined the word \textsl{operad} in 1972 to qualify PROPs with only one output, and used them for understanding iterated loop spaces \cite{M1972}. Operads received a renewed interest in the Nineties of the last century, together with reformulations in purely algebraic frameworks \cite{S1986, GJ1994, Lo1996, Ma1996}. The elegant formulation in the formalism of A. Joyal's species \cite{J1981, J1986} has been particularly developed by F. Bergeron, G. Labelle and P. Leroux \cite{BLL1998}, M. M\'endez \cite{M2015} as well as by M. Aguiar and S. Mahajan \cite{AM2010, AM2020}, among others. Recent and now classical sources on operads include the monograph by J.-L. Loday and B. Vallette \cite{LV2012}, as well as the three books by B. Fresse \cite{F2009, F2017a, F2017b}.\\
  
This work is part of a general program aiming to answer the following question: given two operads $\tp$ and $\tq$ with $\tq$ positive (i.e. without component of arity zero), does any operad structure exist on their substitution $\tp \circ \tq$?\\
  
A well-known sufficient condition is the existence of a \emph{distributive law}. Indeed, if $\tp$ and $\tq$ are positive and if there exists a distributive law $\delta: \tq\circ \tp \to \tp\circ \tq$ then $(\tp \circ \tq, \gamma_{\tp \circ \tq},  \eta_{\tp}\circ \eta_{\tq})$ is an operad, where $\eta_{\tp}$ and $\eta_{\tq}$ are the respective units of $\tp$ and $\tq$ and 
\[\gamma_{\tp \circ \tq}:= (\gamma_{\tp} \circ \gamma_{\tq})\, (\text{id}_{\tp} \circ \delta \circ \text{id}_{\tq}),\]
where $\gamma_{\tp} $ and $\gamma_{\tq}$ are the multiplications of $\tp$ and $\tq$, respectively (see also Section 8.6.1 in \cite{LV2012}). Distributive laws were defined by J. Beck in 1969 for triples \cite{Beck1969}; the corresponding notion for operads can be found in \cite{Ma1996}.\\
  
The first and fourth author studied the case, probably out of the distributive law framework, where $\tp=\wL$ and where $\tq$ is a positive operad, with $\wL$ being the list operad (or \textsf{As}). They succeeded in describing by hand an operad structure on $\wL \circ \tq$ using compositions (ordered partitions) of sets and the shuffle product \cite{MV2023}. In \cite{Rjaiba2025}, the third author introduces operad structures on $\tp \circ \tq$, in the case of $\tp \in \{\textsf{Mag}, \widetilde{\textsf{Mag}},  \textsf{Nap}\}$, where \textsf{Mag} corresponds to the magmatic operad, \textsf{Nap} is the Non-Associative Permutative operad, and $\widetilde{\textsf{Mag}}$ is a new operad introduced by the third author on the species of planar rooted trees, involving a shuffle operation on the branches.\\
  
The present work deals with the study of operadic-like structures on $\wE \circ \tq$, when $\wE$ is endowed with the commutative operad structure $\textsf{Com}$, and $(\tq, \gamma_\tq)$ is any positive operad. We therefore use (unordered) set partitions instead of set compositions when working in $\wL \circ \tq$. When $\tq$ is endowed with a nonunital commutative twisted algebra structure $\mu$, compatible in some sense with $\gamma_\tq$ (see Definition \eqref{OperadCompatible}), we show how to endow $\wE \circ\tq$ with a nonunital operadic structure (Theorem \eqref{TheoremMComp}). Our assumption on the compatibility of $\gamma_\tq$ with $\mu$ is equivalent to a $\textsf{Com}_+$-\,$\tq$ - bimodule structure on $\tq$, in the category $(\Ss, \circ)$ of species with composition. We describe how the notion of operad compatible with a commutative twisted algebra appears naturally on the species $\wE_+$ and $\wL_+$. In particular, we show an apparently unknown but simple relation between the operad $\textsf{As}_+$ and the shuffle operation.\\

In the second part of this work, we introduce a new partial operation \eqref{square-partial} on $\wE\circ \tq$, similar to the pre-Lie operad on trees. This leads to a weak, pre-Lie-like form of operad (Theorem \ref{thm:main}), which we called \textsl{NPL-operad structure} (nested pre-Lie). More precisely, as symmetric operads are monoidal objects in the category of positive species with the substitution operation, NPL-operads can be seen, in some way, as \emph{pre-Lie monoidal objects} in the same monoidal category.\\

When a NPL-operad is defined on $\wE\circ \tq$, from an operad structure on a positive species $\tq$, the global NPL-structure can be described through the combinatorics of the partition lattice. Given a finite set $I$, let $\tau$, $\pi$ and $\rho$ be set partitions of $I$ such that $\tau \leq \pi \leq \rho$ are ordered through the \emph{refinement order} on set partitions. There is a unique map $f:\pi \to \tau$ that sends each block $T$ of $\pi$ to the unique block $S$ of $\tau$, such that $T\subseteq S$. In particular, the fiber of $f$ over $S \in \tau$ is the set whose elements are the blocks of $\pi$ contained in the block $S$ of $\tau$. Then consider the vector space $\tq[\tau : \pi]$ given by 
\[\tq[\tau\,:\,\pi]:= \displaystyle \bigotimes_{S\in \tau} \tq[f^{-1}(S)].\]
Notice that $\tq[\hat{0}_{I} \,:\,\tau]=  \tq[\tau]$, with $\hat{0}_{I}$ being the partition with only one block (the set $I$), and
\[\tq[\tau\,:\,\hat{1}_{I}]=\displaystyle \bigotimes_{S\in \tau} \tq[S],\]
where $\hat{1}_{I}$ is the partition into singletons. The NPL-operad structure on $\wE \circ \tq$ presented in this work is then determined by a map
\[\displaystyle \bigoplus_{\tau \le \pi \le \rho} \tq[\tau : \pi]\otimes \tq(\rho) \longrightarrow \displaystyle \bigoplus_{H \vdash I} \tq(H),\]
where $\tq(H):=\tq[H\,:\,\hat{1}_{I}]$ for every set partition $H$ of $I$.

\

This article is organized as follows: the first part (Section \ref{sec:1}) is the introduction, the second part (Section \ref{sec:background}), divided into six paragraphs, gathers the basic notions needed in this work. Paragraphs \ref{PreliminariesPartitions} and \ref{Species} recall some basic notions on partitions and species. Paragraphs \ref{CauchyAndTwisted} and \ref{CompAndOperads}, borrowed from \cite{AM2010}, are devoted to the notions of twisted algebra and operad in species, via the Cauchy product and the substitution operation in species, respectively. We recall the definitions of the (classical) operads \textsf{Com} and \textsf{As}. In particular, we  prove that $\textsf{Com}_+$ is the unique operad structure on the positive species $\wE_+$, modulo isomorphism (Lemma \ref{ComUniq}).  We introduce in Paragraph \ref{FreeComTwAlg} the free commutative monoid $\textsf{S}(\tq):=\wE\circ \tq$ on a positive species $\tq$. Finally, Paragraph \ref{ModuleOperad} introduces the notion of modules in species, with respect to the substitution operation.\\

In Section \ref{sec:OperadsAndAlgebras}, we introduce the notion of an operad in $\tq$ compatible with a (nonunital) commutative twisted algebra. The main result here is given in Theorem \eqref{TheoremMComp}, where we endow the free commutative twisted algebra $\wE \circ \tq$ with a (nonunital) operad structure. We recover a classical operad structure on the species $\tPi$ of partitions, considered by M\'endez and Yang in \cite{MY}. We also unravel a simple, but apparently unknown, relation between the positive operad $\textsf{As}_+$ and the shuffle product (see Lemma \eqref{AsAndShuffle}). This allows us to define a new operadic operation on the species of linear set partitions $\overrightarrow{\tPi}$. \\

The last two sections are devoted to the study of NPL-operads. In Section \ref{sec:NPL}, we introduce the definition of NPL-operads in Paragraph \ref{pre-lie-like} by relaxing the nested associativity axiom down to a \textsl{nested pre-Lie axiom}. This definition is new up to our knowledge. We exhibit in Paragraph \ref{operadTonpl} the NPL-operad structure in $\wE \circ \tq$ (Theorem \ref{thm:main}), by giving an explicit expression for the partial compositions
\[\rhd_a:\wE \circ \tq[I]\otimes\wE \circ \tq[J]\to\wE \circ \tq[I\sqcup_a J].\]
The nested associativity axiom is replaced by the nested pre-Lie axiom
\[\alpha\rhd_a(\beta\rhd_b\gamma)-(\alpha\rhd_a\beta)\rhd_b\gamma)
=\beta\rhd_b(\alpha\rhd_a\gamma)-(\beta\rhd_b\alpha)\rhd_a\gamma.\]
Paragraph \ref{GlobalComp} presents an explicit form of the global multiplication $\gamma_{\wE\circ \tq}$. In Paragraph \ref{nplTonpl}, we show that the partial operation used in Theorem \ref{thm:main} is also valid when we replace the operadic structure on $\tq$ to a NPL-operadic structure. \\

Section \ref{sect:pol} gives a description of algebras over a NPL-operad. We describe a NPL-operad structure on the species $\mathcal E(V)$ of polynomial maps, where $\mathcal E(V)[I]$ is the vector space of polynomial maps from $V^{\otimes I}$ to $V$, and define an algebra over a NPL-operad $\mathcal P$ as a vector space $V$ together with a morphism of NPL-operads from $\mathcal P$ to $\mathcal E(V)$.\\

Many examples are presented in order to describe our constructions. In particular, we obtain a new operadic structure on the species $\overrightarrow{\tPi}$ of linear set partitions (Subsection \ref{LinearPartition}), and introduce NPL-operad structures on the species $\wE$ (Example \ref{NPLforE}), $\tPi$ (Example \ref{NPLforPi}), $\overrightarrow{\tPi}$ (Example \ref{NPLforArrowPi}), the species $\tc$ of cycles and the species $\mathfrak{S}$ of permutations (Subsection \ref{NPLforCandS}). 
Such species play a fundamental role in the study of the braid arrangement (see \cite{AMHyperpl}). In this setting, linear orders correspond to chambers of the arrangement, set partitions correspond to flats, and linear set partitions correspond to top-lunes. We hope that our description of operads and NPL-operads on $\wE\circ \tq$ may have geometrical applications from this point of view. For instance, the operad \textsf{As} is a particular case of a substitution operation on chambers in a general hyperplane arrangement (see Sections 4.8 and 6.5.10 in \cite{AMHyperpl}).

\

\noindent \textbf{Acknowledgements:} We thank B\'er\'enice Delcroix-Oger for illuminating discussions on distributive laws. D. M. and I. R. are partially supported by Agence Nationale de la Recherche, projet Combinatoire Alg\'ebrique, Renormalisation, Probabilit\'es libres et Op\'erades ANR-20-CE40-0007. I. R. also acknowledges support from the university of Gabes (Bourse d'Alternance). Y. V. is partially supported by the Austrian Science Fund (FWF), grant I 5788.

\section{\bf{Background on algebraic structures on Joyal's species}}\label{sec:background}
We mainly follow the presentations of \cite{J1986} and \cite{AM2010} except that we consider contravariant species. This purely conventional choice yields a \textsl{right} action of the permutation group of a finite set $I$ on components of the corresponding arity, thus matching a common practice in the operadic literature.

\subsection{Preliminaries on partitions}\label{PreliminariesPartitions}

Let $I$ be a finite set. A \emph{partition} $\pi = \{B_1, B_2, \hdots, B_k\}$ of $I$ is a set of non-empty subsets of $I$, called \emph{blocks} of the partition $\pi$, such that $I= \bigcup_{B \in \pi}B$ and $B_i \cap B_j =\emptyset$, for all $i \neq j$. The \emph{length} $\ell(\pi):=k$ of the partition $\pi$ corresponds to the number of blocks of $\pi$. We will write $\pi \vdash I$ to indicate that $\pi$ is a partition of $I$.

\medbreak

The set of partitions of $I$ is a poset, via the \emph{refinement order} on set partitions: if $\pi, \rho \vdash I$, we write $\pi \leq \rho$ if each block of $\pi$ can be obtained by merging some of the blocks of $\rho$. This poseet is a lattice, with $\hat{0}_I:=\{I\}$ and $\hat{1}_I:=\{\{i\}: i \in I\}$.

\medbreak

We write $I= S \sqcup T$ whenever there is an ordered pair $(S,T)$ of (possibly empty) sets $S$ and $T$ such that $S \cup T = I$. Such pair is called a \emph{decomposition} of $I$.

\subsection{Species}\label{Species}
Let us consider the following categories:
\begin{itemize}
\item $\mathsf{Fin}$, whose objects are finite sets and whose morphisms are bijections between finite sets;
\item $\mathsf{Vect}_{\mathbb K}$, often abbreviated $\mathsf{Vect}$, whose objects are vector spaces over some given ground field $\mathbb K$ and whose morphisms are linear maps between vector spaces.
\end{itemize}
    
Both $\mathsf{Fin}$ and $\mathsf{Vect}$ are symmetric monoidal categories, with the cartesian and tensor products, respectively.

\medbreak 
A vector species, or simply a species, is a functor
\[\mathsf{Fin}^{\text{op}} \longrightarrow \mathsf{Vect}.\]

A morphism between species $\tp$ and $\tq$ is a natural transformation between the functors $\tp$ and $\tq$. A species $\tp$ is said to be \emph{connected} if $\tp[\emptyset] = \mathbb{K}$, and \emph{positive} if $\tp[\emptyset]= (0)$. In particular, the \emph{positive part} of a species $\tp$ is the new species $\tp_+$ given by
\[\tp_+[I]:= 
\begin{cases}
\tp[I],& \text{ if } I \neq \emptyset;\\
(0), & \text{ if } I = \emptyset.
\end{cases}\]

\subsection{Cauchy product and twisted algebras}\label{CauchyAndTwisted}

The \emph{Cauchy product} of two vector species $\tp$, $\tq$ is given by
\begin{equation}
(\tp \cdot \tq)[I]:= \displaystyle \bigoplus_{I= S \sqcup T}\tp[S] \otimes  \tq[T],
\end{equation}
for any finite set $I$. If $I = S \sqcup T$ and $I \xrightarrow{\sigma}J$ is a bijection between finite sets, consider the map $\tp[\sigma|_S]\otimes \tq[\sigma|_T]: \tp[S]\otimes \tq[T] \to \tp[\sigma(S)]\otimes \tq[\sigma(T)]$, where $\sigma|_S$ and $\sigma|_T$ are the restrictions of the bijection $\sigma$ to $S$ and $T$, respectively. The map $(\tp\cdot \tq)[\sigma]$ is then defined as
\[(\tp\cdot \tq)[\sigma]:=\bigoplus_{I = S\sqcup T}\tp[\sigma|_S]\otimes \tq[\sigma|_T].\]

The species $\mathbf{1}$, defined as $\mathbf{1}[\emptyset]:= \mathbb{K}$ and $\mathbf{1}[I]:=(0)$ if $I$ is non-mepty, is the unit element for the Cauchy product. This turns the category $\Ss$ of species into a symmetric monoidal category, where the braiding $\beta: \tp \cdot \tq \to \tq \cdot \tp$ is the natural transformation where each $\beta_I$ is the direct sum of the linear maps
\[\tp[S]\otimes \tq[T] \to \tq[T] \otimes \tp[S],\]
for each decomposition $I=S \sqcup T$.

\

A \emph{twisted algebra} in species is a monoidal object in the monoidal category $(\Ss,\cdot, \mathbf{1})$. Explicitly, a twisted algebra $(\taa, \mu, \iota)$ consists of a species $\taa$ with morphisms of species 
\[\mu : \taa \cdot \taa \to \taa \qquad \text{and} \qquad \iota: \mathbf 1 \to \taa,\]
subject to associativity and unitality conditions. For a finite set $I$ and a decomposition $I= S \sqcup T$, we have linear maps
$$\mu_{S,T} : \taa[S] \otimes \taa[T] \to \taa[I] \qquad \text{and} \qquad \iota_{\emptyset}: \mathbb{K} \to \taa[\emptyset],$$
satisfying the following properties.  	

\medbreak

\begin{itemize}
\item Naturality: for each decomposition $I=S \sqcup T$ of a finite set $I$, and for each bijection $\sigma: I \to J$ we must have
\begin{displaymath}
\xymatrix@C=5cm@R=2cm{
\taa[S] \otimes \taa[T]\; \ar[r]^{\mu_{S,T}} \ar[d]_{\taa[\sigma|_S] \otimes \taa[\sigma|_T]} & \taa[I]\ \ar[d]^{\taa[\sigma]}\\
\taa[\sigma(S)] \otimes \taa[\sigma(T)]  \ar[r]_{\mu_{\sigma(S),\sigma(T)}} & \taa[J]}
\end{displaymath}

\
  	
\item Associativity: for each decomposition $I= R \sqcup S \sqcup T$,
\begin{displaymath}
\xymatrix@C=4cm@R=2cm{
\taa[R] \otimes \taa[S] \otimes \taa[T]\; \ar[r]^{\text{id}_{\taa[R]} \otimes\mu_{S,T}} \ar[d]_{\mu_{R , S}\otimes \text{id}_{\taa[T]}} & \taa[R] \otimes \taa[S\sqcup T]\ \ar[d]^{\mu_{R, S\sqcup T}}\\
\taa[R \sqcup S] \otimes \taa[T] \ar[r]_{\mu_{R \sqcup S,T}} & \taa[R \sqcup S \sqcup T]}
\end{displaymath}

\

\item Unitality: for every finite set $I$, the following diagrams commute:
\[\xymatrix{
\taa[I]\ar@{=}[drr]&& \taa[\emptyset]\otimes \taa[I]\ar[ll]_{\mu_{\emptyset,I}}\\
&&\mathbb{K}\otimes \taa[I]\ar[u]_{\iota_\emptyset\otimes \smop{id}_{\taa[I]} }}\hskip 16mm
\xymatrix{
\taa[I]\otimes \taa[\emptyset]\ar[rr]^{\mu_{I,\emptyset}}&&\taa[I]\\
\taa[I]\otimes\mathbb K\ar[u]^{\smop{id}_{\taa[I]}\otimes \iota_\emptyset}\ar@{=}[rru]}\]
\end{itemize}
\

If $x \in \taa[S]$ and $y \in \taa[T]$, we write $x \cdot y:=\mu_{S,T}(x \otimes y) \in \taa[S \sqcup T]$.

\

A twisted algebra $(\taa, \mu)$ is \emph{commutative} if the following diagram commutes:
\[\xymatrix{
\taa[S] \otimes \taa[T] \ar[dr]_{\mu_{S,T}} \ar[rr]^\beta && \taa[T]\otimes \taa[S] \ar[dl]^{\mu_{T,S}}\\
&\taa[I] &\\}
\]
for every finite set $I$ and for every decomposition $I = S \sqcup T$.

\

\begin{example}\label{TwAlgE}
The \emph{exponential species} $\wE$ is defined by $\wE[I]:= \mathbb{K}\{*_I\}$ for all finite set $I$. We denote by $*_{I}$ the element $1 \in \mathbb{K} = \wE[I]$. For every finite set $I$, the direct sum of the maps
\begin{align*}
\mu_{S,T}:\wE[S]\otimes \wE[T]&\to \wE[I]\\ *_{S}\otimes *_{T}&\mapsto *_{S \cup T},
\end{align*}
for every $I = S\sqcup T$, define a twisted algebra on $\wE$. The unit is given by 
\begin{align*}
\iota_{\emptyset}:\mathbb{K}&\to \wE[\emptyset]\\ 1_{\mathbb{K}}&\mapsto *_{\emptyset}.
\end{align*}

As $S \cup T = I = T \cup S$ for every decomposition $I = S \sqcup T$, $(\wE, \mu, \iota)$ is a commutative twisted algebra.
\end{example}

\

\begin{example}\label{TwAlgL}
A \emph{linear order} on a finite set $I$ is a bijection $\ell: [n] \to I$, where $n:=|I|$. If $\sigma: I \to J$ is a bijection between finite sets, the map $\ell \mapsto \sigma \circ \ell$ sends a linear order $\ell$ of $I$ to a new linear order on $J$. This defines the species $\wL$ of linear orders. Formally,
\[\wL[I]:= \mathbb{K}\{\ell: [n] \xrightarrow{\cong} I\},\]
whenever $n = |I|$. There is a unique linear order on the empty set, denoted by $\varnothing$. This makes $\wL$ a connected species.

\medbreak

If $\ell$ is a linear order on $I$, we write $i_1 \leq_\ell i_2$ if $\ell^{-1}(i_1)\leq \ell^{-1}(i_2)$, for all $i,j \in I$. This endows the set $I$ with a total order, for each linear order $\ell$. In general, we write $\ell = i_1|i_2| \cdots |i_n$ to denote the linear order $\ell$ of $I$, for which $I=\{i_1, i_2, \hdots, i_n\}$ and $i_1 <_\ell i_2 <_\ell \cdots <_\ell i_n$.

\medbreak

If $S \subseteq I$ and $\ell$ is a linear order on $I$, there exists a unique linear order on $S$, denoted by $\ell \cap S$, such that $s_1 \leq_{\ell \cap S} s_2$ if and only if $s_1 \leq_\ell s_2$. Writing $\ell$ as a word, $\ell \cap S$ is the new word obtained by removing elements in $I \setminus S$ and keeping the elements from $S$  in the same relative order.

\medbreak

The species $\wL$ is endowed with a commutative twisted algebra structure, given by the notion of \emph{shuffling}. If $I = S\sqcup T$, let $\ell_1$ and $\ell_2$ linear orders of $S$ and $T$, respectively. A shuffle of $\ell_1$ and $\ell_2$ is a linear order $\ell$ of $I$ such that $\ell \cap S = \ell_1$ and $\ell \cap T = \ell_2$. We denote by $\mathsf{Sh}(\ell_1, \ell_2)$ the set of all shuffles of $\ell_1$ and $\ell_2$. From here, we define a map $\sh: \wL \cdot \wL \to \wL$ with $I$-component given by the direct sums of the linear maps
\[\sh_{S,T}: \wL[S]\otimes \wL[T] \to \wL[I]\]
\[\ell_1 \otimes \ell_2 \mapsto \sum_{\ell \in \mathsf{Sh}(\ell_1, \ell_2)}\ell,\]
with $I = S \sqcup T$. The unit is given by $\iota_{\emptyset}(1_{\mathbb{K}}):=\varnothing$.
\end{example}

\begin{remark}\rm
The species of linear order $\wL$ is also endowed with a natural structure of non-commutative twisted algebra, given by the concatenation $\ell|\ell'$ of linear orders. Moreover, the concatenation is compatible with the coproduct dual to the shuffle operation (``deshuffling'' coproduct), making $\wL$ a twisted bialgebra. In this way, it is usual to consider the shuffle operation on the dual species $\wL^*$ (this is the approach taken in \cite{AM2010}, see example 8.24). The map sending a linear order on $I$ to the sum of all linear orders on $I$ is a map of twisted algebras, from $\wL$ (endowed with the concatenation operation) to $\wL^*$ (endowed with the shuffle operation). As it is not necessary in this work to make passages from a species to its dual, we consider the shuffle operation directly on $\wL$.
\end{remark}

\

\subsection{Composition and symmetric operads}\label{CompAndOperads}
Let $\tp, \tq$ be a species, with $\tq$ positive. The \emph{composition} operation $\tp\circ \tq$ is defined as follows: for any finite set $I$,
\[(\tp \circ \tq)[I]:= \displaystyle \bigoplus_{\pi \vdash I}\tp[\pi] \otimes \tq(\pi),\]
where for a given partition $\pi$ of $I$, $\tq(\pi) :=\displaystyle \bigotimes_{B \in \pi} \tq[B]$. Here, the symmetry of the tensor product of vector spaces guarantees the well-definiteness of $\tq(\pi)$, also called the \emph{unordered product}. Elements of this vector space are commutative monomials written as $\bigotimes_{B \in \pi}\alpha_B$, where $\alpha_B \in \tq[B]$ and the tensor product is taken over the partition $\pi$. See Example 1.30 in \cite{AM2010} for an explicit description of unbracketed and unordered tensor products of vector spaces using universal properties.

\medbreak 
\begin{notation}\label{notation-alpha}
For later use, let us introduce the following notation: for any element
\[\alpha=\bigotimes_{S\in X}\alpha_S\in \tq(\pi)\]
and for any subset $Z\subset \pi$ we set
\[\alpha_Z:=\bigotimes_{S\in Z}\alpha_S \quad \text{ and } \quad \alpha_{\widehat Z}:=\alpha_{\pi\setminus Z}.\]
Also, for any $B,C\in X$, we will also use the notation
\[\alpha_{\widehat{B}}:=\alpha_{\widehat{\{B\}}} \quad \text{ and } \quad  \alpha_{\widehat B,\widehat C}:=\alpha_{\widehat{\{B,C\}}}.\]
\end{notation}

\

The unit for the composition $\circ$ is the operad $\mathbb I$ defined by $\mathbb I[I]=\mathbb K\{\star\}$ for $I=\{\star\}$ being a singleton, and $\mathbb{I}[I]=(0)$ whenever $|I|\neq 1$. The species $\mathbb I$ is sometimes denoted by $\mathbf X$ in the literature, e.g. in \cite[Paragraph 8.1.2]{AM2010}. Let $\Ss_+$ be the category of positive species. Then, the triple $(\Ss_+, \circ, \mathbb{I})$ forms a monoidal category. It is not braided nor symmetric.

\

Let us now introduce some notations used by Aguiar-Mahajan in \cite[Paragraph B.1.1]{AM2010} which will be useful in the next section. Given a finite set $I$, consider partitions $\pi$ and $\rho$ of $I$, with $\rho$ refining $\pi$. There is a unique map $f:\rho \to \pi$ that sends each block $C$ of $\rho$ to the unique block $B$ of $\pi$ such that $C\subseteq B$. The fiber of $f$ over $B \in\pi$ is the set whose elements are the blocks of $\rho$ that refine the block $B$ of $\pi$. For any positive species $\tq$, we write
\[\tq[\pi\,:\,\rho]:= \displaystyle \bigotimes_{B\in \pi} \tq[f^{-1}(B)].\]
In particular, $\tq[\hat{0}_{I} \,:\,\pi]=  \tq[\pi]$, and
\[\tq[\pi\,:\,\hat{1}_{I}]= \tq(\pi)=\displaystyle \bigotimes_{B\in \pi} \tq[B].\]

In particular, associativity of the composition operation in species follows from the isomorphisms
\[((\tp \circ \tq)\circ \trr)[I] \cong \bigoplus_{\hat{0}_1 \leq \pi \leq \rho \leq \hat{1}_I} \tp[\hat{0}_1 : \pi] \otimes \tq[\pi : \rho] \otimes \trr[\rho: \hat{1}_I] \cong (\tp \circ (\tq \circ \trr))[I],\]

for all species $\tp, \tq, \trr$, with $\tq, \trr$ positives. 

\

\begin{notation}
For two nonempty sets $S,T$, and for $s \in S$, we write:
\[S \sqcup_{s} T:=\left( S\backslash \{s\}\right) \uplus T.\]
\end{notation}
  
\

A (symmetric) \emph{operad} is a monoid in $(\Ss_{+}, \circ)$. More explicitly, a positive symmetric operad is a positive species $\tq$ with associative and unital morphisms of species 
\[\gamma: \tq \circ \tq \to \tq \qquad \text{and} \qquad \eta: \mathbb{I} \to \tq,\]
such that for each nonempty finite set $I$ and for each partition $\pi \vdash I$, there is a linear map 
\[\gamma_\pi: \tq[\pi] \otimes \left( \displaystyle \bigotimes_{B \in \pi}\tq[B]\right)\to \tq[I]\]
called \emph{global operadic composition}. For every finite set $I$, $\gamma_I$ is the direct sum of all operadic compositions $\gamma_\pi$, for $\pi \in I$. Also, for each singleton $\{\star\}$, there is a unit map, 
\[\eta_\star : \mathbb{K} \to \tq[\{\star\}]\]
satisfying naturality, associativity and unitality axioms.

\

The correspondence between symmetric operads and monoids in the monoidal category $(\Ss_+, \circ, \mathbb{I})$) goes back to the work of Kelly in \cite{K2005}. In \cite{Ma1996b}, Markl introduces a description of the operadic operation of an operadic in terms of \emph{partial compositions}. In this language, a symmetric operad is a positive species $\tq$ such that, for any two nonempty finite sets $S,T$, and for any $s \in S$, there exist  elements $ u_{s} \in \tq[\{s\}]$ and a partial composition operation 
\[\circ_{s} : \tq[S]\otimes \tq[T] \to \tq[S \sqcup_{s}T],\] 
satisfying the followings axioms:

\begin{itemize}
 \item Associativity: for $ x \in \tq[S]$, $y \in \tq[T]$,
\begin{itemize}
\item[(A1)] $(x \circ_{s}y)\circ_{s'}z=(x \circ_{s'}z)\circ_{s}y$, if $s$ and $s'$ are two distinct elements of $S$;
\item[(A2)] $(x \circ_{s}y)\circ_{t}z=x \circ_{s}(y\circ_{t}z)$, if $s \in S$ and $t \in T$.
\end{itemize}

\medbreak
  		
\item Naturality: given two bijections $\sigma_{1}:S \to S'$ and $\sigma_{2}:T \to T'$ and $s \in S$. Then, for every $x \in \tq[S]$, $y \in \tq[T]$,

\begin{itemize}
\item[(N1)] $\tq[\sigma_{1}](x) \circ_{\sigma(s)} \tq[\sigma_{2}](y)= \tq[\sigma](x \circ_{s}y)$, where
\[\sigma:=(\sigma_{1})|_{S\setminus\{s\}} \cup \sigma_{2}:S \,{\sqcup}_{s} T \to S' \sqcup_{\sigma_{1}(s)} T';\]
\item[(N2)] if $s_{1},s_{2}\in S$, then for any bijection $\sigma:\{s_{1}\}\to \{s_{2}\}$ we have 
\[\tq[\sigma](u_{s_{1}})=u_{s_{2}}.\]
\end{itemize}
\medbreak
\item Unitality: for $s \in S$ and $x \in \tq[S]$, we have
\begin{itemize}
\item[(U1)] $u_{\star}\circ_{\star}x=x$ for any singleton $\{\star\}$;
\item[(U2)] $x \circ_{s}u_{s}=x$.
\end{itemize}
\end{itemize}

\

\begin{example}[The positive commutative operad]\label{OperadCom}
The positive commutative operad $\mathsf{Com}_+$ is an operad on $\wE_+$ defined by
\[*_S\circ_a *_T=*_{S\sqcup_a T},\] 
for any finite sets $S,T$ and for any $a\in S$. For every finite set $I$, the $I$-components of the unit $\mathbb{I}[I] \to \wE_+[I]$ are given by $1_{\mathbb{K}} \mapsto *_I$. 
The axioms of an operad are obviously verified. 
\end{example}

The above operad structure on $\wE$ is the only one, modulo isomorphism. 

\begin{lemma}\label{ComUniq}
$\mathsf{Com}_+$ is the unique operad structure on $\wE_+$.
\end{lemma}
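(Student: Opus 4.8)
The plan is to show that any operad structure on the positive species $\wE_+$ must coincide, up to isomorphism, with $\mathsf{Com}_+$. First I would unwind what data an operad structure on $\wE_+$ amounts to: since $\wE_+[I] = \mathbb{K}\{*_I\}$ is one-dimensional for every nonempty finite set $I$, each partial composition $\circ_a \colon \wE_+[S]\otimes \wE_+[T] \to \wE_+[S\sqcup_a T]$ is determined by a single scalar $\lambda_{S,a,T} \in \mathbb{K}$ via $*_S \circ_a *_T = \lambda_{S,a,T}\, *_{S\sqcup_a T}$; similarly each unit element is $u_{\{\star\}} = c_\star\, *_{\{\star\}}$ for a scalar $c_\star$. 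Naturality axiom (N1) applied to the unique bijections between one-element sets, together with (N2), forces all the $c_\star$ to agree and (after the obvious rescaling, which is where the ``modulo isomorphism'' enters, or by observing that the one-dimensionality makes this rigid) we may take $u_{\{\star\}} = *_{\{\star\}}$. Then unitality (U1) gives $*_{\{\star\}}\circ_\star *_T = *_T$, i.e. $\lambda_{\{\star\},\star,T} = 1$, and (U2) gives $*_S \circ_s *_{\{s\}} = *_S$, i.e. $\lambda_{S,s,\{s\}} = 1$.

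Next I would use naturality (N1) in full: since for any two nonempty finite sets $S,T$ of the same respective cardinalities there are bijections matching them up and matching a chosen $s \in S$ to a chosen $s' \in S'$, the scalar $\lambda_{S,a,T}$ depends only on $|S|$ and $|T|$ (not on the particular sets or the particular element $a$); write it $\lambda_{m,n}$ where $m = |S|$, $n = |T|$. Now I would run the associativity axiom (A2): for $x = *_S$, $y = *_T$, $z = *_U$ with $s\in S$, $t\in T$, we have $(*_S\circ_s *_T)\circ_t *_U = *_S\circ_s(*_T\circ_t *_U)$, which after applying the compositions translates into the multiplicative relation
\[
\lambda_{m,n}\,\lambda_{m+n-1,\,p} \;=\; \lambda_{m,\,n+p-1}\,\lambda_{n,p},
\]
where $p = |U|$. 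Using the already-established boundary values $\lambda_{1,n} = 1$ and $\lambda_{m,1} = 1$, I would solve this functional equation: setting $p=1$ recovers nothing new, but a small induction on $m+n$ (or plugging $n=1$ cleverly, then bootstrapping) shows $\lambda_{m,n} = 1$ for all $m,n\geq 1$. Concretely, take $n=1$ in the relation with the roles arranged so that one factor is already known to be $1$, isolating $\lambda_{m,p}$ in terms of smaller instances; I expect a clean induction here.

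The only genuine subtlety — and the step I expect to be the main obstacle — is the ``modulo isomorphism'' clause and making sure the rescaling argument is airtight: a priori one could imagine rescaling the generators $*_I$ by a nonzero scalar $\varkappa_I$ depending on $|I|$, which would change the structure constants $\lambda_{m,n}$ by the factor $\varkappa_{m+n-1}/(\varkappa_m\varkappa_n)$, and one must check that the associativity constraint together with naturality forces the class of $(\lambda_{m,n})$ under this rescaling action to be trivial, i.e. that $(\lambda_{m,n})$ is always a coboundary of this form; equivalently that any two operad structures on $\wE_+$ are related by such a rescaling isomorphism. In fact the computation in the previous paragraph already shows $\lambda_{m,n} \equiv 1$ outright once the unit is normalized, so the isomorphism only needs to absorb the normalization of the unit element, and (N2) plus one-dimensionality makes even that automatic. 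I would finish by remarking that the resulting operad is exactly $\mathsf{Com}_+$ as defined in Example \ref{OperadCom}, and that the global composition $\gamma_\pi$ is recovered from the partial ones, so no further checking is needed.
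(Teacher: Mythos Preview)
Your setup is correct: naturality reduces the partial compositions to scalars $\lambda_{m,n}$ depending only on $m=|S|$ and $n=|T|$, unitality gives $\lambda_{1,n}=\lambda_{m,1}=1$, and nested associativity (A2) gives the cocycle relation
\[
\lambda_{m,n}\,\lambda_{m+n-1,p}=\lambda_{m,n+p-1}\,\lambda_{n,p}.
\]
However, the key step of your argument---that these constraints force $\lambda_{m,n}\equiv 1$---is false. Plugging $n=1$ (or $m=1$, or $p=1$) into the relation yields a tautology, so no induction gets off the ground. More to the point, there are genuine counterexamples: for \emph{any} function $\mu:\mathbb Z_{\ge 1}\to\mathbb K^{\times}$ with $\mu(1)=1$, the assignment
\[
\lambda_{m,n}:=\frac{\mu(m)\,\mu(n)}{\mu(m+n-1)}
\]
satisfies the cocycle relation \emph{and} the boundary conditions $\lambda_{1,n}=\lambda_{m,1}=1$, yet is not identically $1$ unless $\mu$ is (e.g.\ take $\mu(3)=2$, $\mu(k)=1$ otherwise, giving $\lambda_{2,2}=1/2$). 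So normalizing the unit is not enough; the ``modulo isomorphism'' clause has real content beyond that, and your dismissal of the rescaling issue is exactly where the argument breaks.

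The paper's proof handles this correctly by recognizing the problem as a cohomological one. The cocycle relation above is precisely the $2$-cocycle condition for the monoid $(\mathbb Z_{>0},\,a*b:=a+b-1)\cong(\mathbb Z_{\ge 0},+)$ with coefficients in $\mathbb K^{\times}$, and the rescalings $*_I\mapsto\varkappa_{|I|}*_I$ act on $\lambda$ exactly by multiplication by the $2$-coboundaries $\varkappa_m\varkappa_n/\varkappa_{m+n-1}$. Thus two operad structures are isomorphic iff the corresponding cocycles are cohomologous, and the paper concludes by invoking $H^2(\mathbb Z,\mathbb K^{\times})=0$. What you would need to complete your approach is not to show $\lambda\equiv 1$, but to show that every solution $\lambda$ is of the coboundary form above---equivalently, to construct the rescaling $\varkappa$ explicitly from $\lambda$.
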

\begin{proof}
Remark that the partial composition of any operad structure on $\wE$ are necessary written as
\[*_S \circ_s *_T=\alpha(|S|,|T|)*_{S\sqcup_s T}\]
for $S,T$ finite sets and $s\in S$. The coefficient $\alpha$ depends only on the cardinalities because of the naturality axioms. Parallel and nested associativity then respectively yield
\begin{eqnarray}
    \alpha(a,b)\alpha(a+b-1,c)&=&\alpha(a,c)\alpha(a+c-1,b)\label{parallele},\\
    \alpha(a,b)\alpha(a+b-1,c)&=&\alpha(a,b+c-1)\alpha(b,c)\label{emboite}.
\end{eqnarray}
for any positive integers $a,b,c$ (think $a=|S|$, $b=|T|$ and $c=|U|$). Unitality axioms yield
\begin{equation}\label{unite}
  \alpha(1,a)=\alpha(a,1)=1.  
\end{equation}
Putting $a=1$ in \eqref{parallele} and using \eqref{unite}, we obtain the symmetry $\alpha(b,c)=\alpha(c,b)$. Both equations \eqref{parallele} and \eqref{emboite} therefore become equivalent. Now given any map $\lambda:\mathbb N\to \mathbb K\setminus\{0\}$ with $\lambda(0)=\lambda(1)=1$, any map of the form
\[\alpha'(a,b)=h(a,b)\alpha(a,b)\]
with
\[h(a,b):=\frac{\lambda(a)\lambda(b)}{\lambda(a+b-1)}\] also verifies \eqref{parallele} and \eqref{emboite}. Here appears the second cohomology group of the monoid $\mathbb Z_{>0}$ endowed with the associative product $a*b:=a+b-1$, acting trivially on $\mathbb K\setminus\{0\}$. The $\alpha$'s are cocycles and the $h$'s are coboundaries. Two cohomologous cocycles come from two isomorphic operad structures. The semi-group itself is obviously isomorphic to $\mathbb Z_{\ge 0}$ endowed with the usual addition. The cohomology groups of $\mathbb Z_{\ge 0}$ turn out to be isomorphic to those of the group $\mathbb Z$ \cite{B1961}. It is well known that $H^2(\mathbb Z, \mathbb K^*)$ is trivial \cite[III.1 Example 1]{B1982}, which proves the claim.
\end{proof}

\

\begin{example}[The positive associative operad]\label{OperadAss}
The positive associative operad $\mathsf{As}_+$ is the operad on $\wL_+$ defined as follows: for every pair of disjoint sets $S,T$ and $s \in S$, consider the map
\[\circ_s: \wL[S]\otimes \wL[T] \to \wL[S \sqcup_sT]\]
\[\ell_1|s|\ell_2 \otimes \ell \mapsto \ell_1|\ell|\ell_2.\]

Here, the chosen linear order on $S$ is denoted by the concatenation $\ell_1|s|\ell_2$, thus showing the relative position of $s$.
\end{example}

\

\subsection{Free commutative monoid on a positive species}\label{FreeComTwAlg}

Let $\tq$ be a positive vector species. As $\wE[I] = \mathbb{K}$ for all finite set $I$, we have
\[(\wE \circ\tq)[I]= \bigoplus_{\pi \vdash I} \wE[\pi] \otimes \tq(\pi) = \bigoplus_{\pi \vdash I}\tq(\pi).\]

The species $\tq$ naturally embeds into $\wE \circ \tq$ via $\eta(\tq): \tq \to \wE \circ \tq$, whether $\eta(\tq)_I: \tq[I] \to \tq[\hat{1}_I]$ is the isomorphism induced by the natural bijection $I \cong \{\{i\}: i \in I\}=: \hat{1}_I$. Given a positive species $\tq$, there is a twisted algebra structure on $\wE \circ \tq$ as follows: for every finite set $I$ and a decomposition $I = S \sqcup T$, consider the morphism
\[\mu_{S,T}: (\wE\circ\tq)[S] \otimes (\wE\circ\tq)[T] \to (\wE \circ \tq)[I]\]
defined as the direct sum of the following maps
\[\tq(\pi) \otimes \tq(\rho )\overset{\cong}{\longrightarrow} \tq(\pi \cup \rho ),\]
as $\pi$ and $\rho$ run over all partitions of $S$ and $T$, respectively.
Then, the map $\mu_I$ is defined as the direct sum of the maps $\mu_{S,T}$, for every decomposition $I = S \sqcup T$. The unit is given by the identification of $\mathbb{K} = (\wE \circ \tq)[\emptyset]$. This defines a commutative twisted algebra structure on $\wE \circ \tq$, which in turns defines a functor $\mathsf{S}: \Ss_+ \to \mathsf{Mon}^{\mathsf{co}}(\Ss)$, sending any positive species $\tq$ to the commutative twisted algebra $\wE \circ \tq$. Moreover, this construction yields the \emph{free commutative twisted algebra} over $\tq$.

\begin{proposition}[Theorem 11.13, \cite{AM2010}]\label{PropUnivEq}
Let $\tq$ be a positive species. Consider $(\tp, \mu, \iota)$ a commutative twisted algebra and let $\zeta: \tq \to \tp_+$ be a species map. Then, there exists a unique morphisms $\hat{\zeta}: \mathsf{S}(\tq)_+ \to (\tp_+, \mu)$ of commutative twisted algebras such that the following diagram commutes:
\begin{equation}\label{UnivEq}
\xymatrix{
\mathsf{S}(\tq)_+ \ar[rr]^{\hat{\zeta}_+}&& (\tp, \mu, \iota)_+\\
&\tq \ar[lu]^{\eta(\tq)} \ar[ru]_{\zeta}&
}
\end{equation}
\end{proposition}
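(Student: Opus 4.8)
The plan is to establish the universal property of $\mathsf{S}(\tq)_+ = (\wE\circ\tq)_+$ by constructing $\hat\zeta_+$ explicitly on each component, checking it is a morphism of commutative twisted algebras, and then verifying uniqueness. First I would describe the map: since $(\wE\circ\tq)_+[I] = \bigoplus_{\pi\vdash I}\tq(\pi)$ for $I\neq\emptyset$, a typical element is a monomial $\bigotimes_{B\in\pi}\alpha_B$ with $\alpha_B\in\tq[B]$, which by the identification $\eta(\tq)$ can be viewed as the product $\prod_{B\in\pi}\eta(\tq)(\alpha_B)$ inside the commutative twisted algebra $\mathsf{S}(\tq)$. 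I would then define
\[
\hat\zeta_I\Bigl(\bigotimes_{B\in\pi}\alpha_B\Bigr) := \mu^{(\pi)}\Bigl(\bigotimes_{B\in\pi}\zeta_B(\alpha_B)\Bigr)\in\tp[I],
\]
where $\mu^{(\pi)}:\bigotimes_{B\in\pi}\tp[B]\to\tp[I]$ is the iterated multiplication of $\tp$; this is well defined because commutativity of $\mu$ makes the unordered tensor product map to a well-defined element (the order of the blocks does not matter), which is exactly why the hypothesis that $\tp$ is \emph{commutative} is needed. On the empty set, $\hat\zeta_\emptyset$ is forced to be $\iota_\emptyset$ composed with the identification $\mathbb K = \mathsf{S}(\tq)_+[\emptyset]$ — here I should note the convention that the ``$+$'' versions are being compared, so the empty component is handled by the unit condition on $\iota$.

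Next I would check the three required properties. Naturality (that $\hat\zeta$ is a morphism of species) follows from naturality of $\zeta$, of $\eta(\tq)$, and of the structure maps $\mu_{S,T}$ of $\tp$; concretely, a bijection $\sigma:I\to J$ sends the $\pi$-summand to the $\sigma(\pi)$-summand compatibly, and $\zeta_{\sigma(B)}\circ\tq[\sigma|_B^{-1}]$ relates to $\tq[\sigma|_B]$ by naturality of $\zeta$. Compatibility with the twisted algebra structure is the heart of the verification: given $I = S\sqcup T$ and monomials $x=\bigotimes_{B\in\pi}\alpha_B\in\tq(\pi)$, $y=\bigotimes_{C\in\rho}\beta_C\in\tq(\rho)$ with $\pi\vdash S$, $\rho\vdash T$, the product $\mu_{S,T}(x\otimes y)$ is by definition the monomial over $\pi\sqcup\rho$, and we must show $\hat\zeta_I(\mu_{S,T}(x\otimes y)) = \mu^\tp_{S,T}(\hat\zeta_S(x)\otimes\hat\zeta_T(y))$; this reduces to associativity of $\mu^\tp$, since $\mu^{(\pi\sqcup\rho)} = \mu^\tp_{S,T}\circ(\mu^{(\pi)}\otimes\mu^{(\rho)})$ (again using commutativity so that the iterated product can be grouped as ``first the $S$-blocks, then the $T$-blocks''). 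The commutativity of diagram \eqref{UnivEq} is immediate: on $\tq[I]$, $\eta(\tq)_I$ lands in the $\hat1_I$-summand as the single monomial $\bigotimes_{\{i\}}\alpha$, and $\hat\zeta$ applied to that is $\mu^{(\hat1_I)}$ of the one-block (up to the singleton identification) family $\zeta(\alpha)$, which is just $\zeta_I(\alpha)$ after the canonical identification $I\cong\hat1_I$.

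Finally, uniqueness: any morphism $\psi$ of commutative twisted algebras making the diagram commute must agree with $\zeta$ on the image of $\eta(\tq)$, i.e. on the ``linear'' part sitting in the $\hat1_I$-summands; since $\mathsf{S}(\tq)_+$ is generated as a twisted algebra by this image (every monomial $\bigotimes_{B\in\pi}\alpha_B$ in the $\pi$-summand equals the $\mu$-product over the blocks $B\in\pi$ of the elements $\eta(\tq)(\alpha_B)$, each sitting in $\tq(\hat1_B)$ after relabelling), multiplicativity of $\psi$ forces $\psi = \hat\zeta$ on all of $\mathsf{S}(\tq)_+$. I expect the main obstacle to be purely bookkeeping: carefully tracking the identifications $\eta(\tq)$, $I\cong\hat1_I$, and the unordered tensor products so that the "obvious" equality $\mu^{(\pi\sqcup\rho)} = \mu^\tp_{S,T}\circ(\mu^{(\pi)}\otimes\mu^{(\rho)})$ is genuinely a consequence of the associativity and commutativity axioms rather than something assumed; invoking the universal-property description of the unordered product (Example 1.30 in \cite{AM2010}) is the clean way to make this rigorous. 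Since the statement is cited as Theorem 11.13 of \cite{AM2010}, I would in fact present this as a sketch and refer the reader there for the full diagram chase.
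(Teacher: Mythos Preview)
The paper does not give its own proof of this proposition: it is simply quoted as Theorem~11.13 of \cite{AM2010}, so there is nothing to compare against beyond the standard argument you have outlined. Your sketch is the expected one --- define $\hat\zeta$ on a monomial $\bigotimes_{B\in\pi}\alpha_B$ as the iterated $\mu$-product of the $\zeta_B(\alpha_B)$, use commutativity of $\mu$ for well-definedness over the unordered tensor, use associativity for multiplicativity, and derive uniqueness from the fact that monomials generate $\mathsf{S}(\tq)_+$ as a twisted algebra --- and it is correct.

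One small slip: the embedding $\eta(\tq)_I$ lands in the $\hat 0_I$-summand $\tq(\hat 0_I)=\tq[I]$ (the partition with a single block), not in the $\hat 1_I$-summand. Your sentence ``lands in the $\hat1_I$-summand as the single monomial $\bigotimes_{\{i\}}\alpha$'' cannot be right as written, since a general $\alpha\in\tq[I]$ does not factor over singletons. With $\hat 0_I$ in place of $\hat 1_I$, the verification of diagram~\eqref{UnivEq} becomes the triviality $\mu^{(\hat 0_I)}(\zeta_I(\alpha))=\zeta_I(\alpha)$. (The paper's own description of $\eta(\tq)$ just above the proposition is phrased in a way that invites this confusion.)
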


\subsection{Modules over an operad}\label{ModuleOperad}

Let $(\tp, \gamma, \eta)$ be an operad. A \emph{left $\tp$-module} is a species $\tm$ with a map $\chi: \tp\circ \tm \to \tm$ which is associative and unital. That is, the following diagrams commute:
\[
\xymatrix{
\tp \circ \tp \circ \tm \ar[r]^-{\text{id}_{\tp}\circ \chi} \ar[d]_{\gamma \circ \text{id}_{\tm}} &\tp \circ \tm \ar[d]^{\chi}\\
\tp \circ \tm \ar[r]_-{\gamma}&\tm
} \qquad , \qquad
\xymatrix{
\mathbb{I}\circ \tm \ar[r]^{\eta \circ \text{id}_{\tm}} \ar[d]_{=}& \tp \circ\tm \ar[dl]^-{\chi}\\
\tm
}
\]

Similarly, $\tm$ is a \emph{right $\tp$-module} if there is an associative and unital map $\chi: \tm \circ \tp \to \tm$.

\medbreak

Let $\tq$ be another operad. A \emph{$\tp$-$\tq$-bimodule} is a species $\tm$ which is a left $\tp$-module, a right $\tq$-module and such that the left and right operad actions $\chi_\ell: \tp \circ \tm \to \tm$ and $\chi_r: \tm \circ \tq \to \tm$ commute:
\[
\xymatrix{
\tp \circ \tm \circ \tq \ar[r]^-{\text{id}_{\tp}\circ \chi_r} \ar[d]_-{\chi_{\ell} \circ \text{id}_{\tq}} & \tp \circ \tm \ar[d]^{\chi_{\ell}}\\
\tm \circ \tq \ar[r]_-{\chi_{r}} & \tm
}
\]

\medbreak

In particular, any positive species $\tp$ with an operadic structure is a $\tp$-$\tp$-bimodule in the obvious way. As for operads, a left $\tp$-module structure on $\tm$ is equivalent to the existence of partial composition operations
\[\circ_s: \tp[S]\otimes \tm[T]\to \tm[S \sqcup_s T],\]
one for every pair $S,T$ of nonempty finite sets and for every element $s \in S$, satisfying associativity, naturality and unitality conditions. The same equivalent notion applies to right-modules and bi-modules. 

\

\section{Operads and compatible twisted algebras} \label{sec:OperadsAndAlgebras}
Let $\tq$ be a positive species. Define a new species $\tq^\circ$ as follows:
\[\tq^\circ[I]:=
\begin{cases}
\mathbb{K}, & \text{if } I = \emptyset;\\
\tq[I], & \text{otherwise}.
\end{cases}
\]

In this section, we consider a special type of operads $(\tq, \gamma)$, endowed with an extra commutative twisted algebra structure $(\tq^\circ, \mu)$ on $\tq^\circ$, satisfying a compatibility relation with the operad structure $\gamma$. This yields a nonunital operad structure on $\wE \circ\tq$. 

\begin{remark}\rm
Let $\Ss^\circ$ and $\Ss_+$ be the categories of connected and positive species, respectively. The assumption of a a twisted algebra $(\tq^\circ, \mu)$ can be replaced with a ``positive'' twisted algebra on $\tq$ as follows. The category $\Ss_+$ is monoidal via the operation $\odot$ given by $\tp_1 \odot \tp_2:= \tp_1 \cdot \tp_2 + \tp_1 + \tp_2$. Moreover, the pair of functors $(-)_+: (\Ss^\circ, \cdot) \to (\Ss_+, \odot)$ and $(-)^\circ: (\Ss_+, \odot)\to (\Ss^\circ, \cdot)$ are bistrong adjoint equivalence. In particular, any twisted algebra in $(\Ss^\circ, \cdot)$ is equivalent to a twisted algebra in $(\Ss_+, \odot)$ (Proposition 8.44 in \cite{AM2010}).
\end{remark}

\medbreak

\begin{notation}
Given a twisted algebra $(\tq, \mu)$, we write $x \bullet y$ for $\mu_{S,T}(x \otimes y)$, whether $I = S \sqcup T$, $x \in \tq[S]$ and $y \in \tq[T]$.
\end{notation}

\

\begin{definition}\label{OperadCompatible}
Let $(\tq, \gamma)$ be an operad, with partial compositions $\{\circ_s\}_s$. Let $S,T$ be nonempty sets, $\pi= \{B_1, B_2, \hdots, B_k\} \vdash S$ with elements $\alpha_i \in \tq[B_i]$ for $1 \leq i \leq k$, and $\beta \in \tq[T]$.

\medbreak

The operad $\gamma$ is said to be $\mu$-compatible, for a commutative twisted algebra structure $(\tq^\circ, \mu)$, if for every $s \in S$ the following relation holds
\begin{equation}\label{mcomp}
(\alpha_1 \bullet \alpha_2 \bullet \cdots \bullet \alpha_k)\circ_s \beta = (\alpha_{i_0} \circ_s \beta) \bullet \alpha_1 \bullet \alpha_2 \bullet \cdots \bullet \widehat{\alpha_{i_0}} \bullet \cdots \bullet \alpha_k,
\end{equation}
where $B_{i_0}$ is the block of $\pi$ containing $s$.
\end{definition}
\medbreak
We now discuss on the global formulation of the notion of a $\mu$-compatible operad. Given a commutative twisted algebra $(\tq^\circ, \mu)$, the map $\tq \xrightarrow{\text{id}} (\tq^\circ)_+=\tq$ induces a map $\chi_\mu:\wE\circ\tq \to \tq$ of (nonunital) commutative twisted algebras, via the universal property \eqref{UnivEq}. It is straightforward to show that $\chi_\mu$ is compatible with the (unique) operad $\textsf{Com}_+$ structure on $\wE$. Hence, the commutative twisted algebra structure on $\tq$ induces a left $\textsf{Com}_+$-module in $\tq$. 

\medbreak

Suppose now that $(\tq, \gamma_\tq)$ is a $\mu$-compatible operad. As $\tq$ is a right $\tq$-module, condition \eqref{mcomp} is equivalent to have a $\textsf{Com}_+$-\,$\tq$-bimodule on $\tq$. In particular, the following diagram commutes:
\begin{equation}
\xymatrix{
\wE \circ \tq \circ \tq \ar[r]^-{\text{id}_{\wE} \circ \gamma_\tq} \ar[d]_-{\chi_{\mu}\circ\text{id}_{\tq}} & \wE \circ \tq \ar[d]^-{\chi_\mu}\\
\tq \circ \tq \ar[r]_-{\gamma_\tq} & \tq
}
\end{equation}

\begin{lemma}\label{LemmaComp}
\phantom{a}
\begin{itemize}
    \item[$(i)$] A nonunital commutative twisted algebra structure on $\tq$ is equivalent to a {\normalfont$\textsf{Com}_+$}-module on $\tq$.
    \item[$(ii)$] A $\mu$-compatible operad $(\tq, \gamma_\tq)$ is equivalent to a {\normalfont$\textsf{Com}_+$}-\,$\tq$-bimodule structure on $\tq$. In this case, we have {\normalfont$\chi_\mu \, (\text{id}_{\wE} \circ \gamma_\tq) = \gamma_\tq \, (\chi_\mu \circ \text{id}_{\tq})$}.
\end{itemize}
\end{lemma}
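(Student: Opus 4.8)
The plan is to prove both parts purely by repackaging data: neither statement needs a computation, only a careful translation between the ``layer-by-layer'' descriptions of the structures involved. Throughout I work with positive species, using the remark preceding the statement (and Proposition~8.44 in \cite{AM2010}) to replace the commutative twisted algebra $(\tq^\circ,\mu)$ by a nonunital commutative twisted algebra on the positive species $\tq$ itself, and recalling from Paragraph~\ref{ModuleOperad} that $\textsf{Com}_+$-module and $\textsf{Com}_+$-$\tq$-bimodule structures unfold into partial compositions with the usual associativity, naturality and unitality axioms.

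For $(i)$, one implication is essentially already in the text: from a nonunital commutative twisted algebra $(\tq,\mu)$ the universal property \eqref{UnivEq} produces $\chi_\mu\colon\wE\circ\tq\to\tq$, whose component on a partition $\pi$ is the iterated $\bullet$-product of the blocks, and checking the left $\textsf{Com}_+$-module axioms reduces exactly to associativity and commutativity of $\mu$ (plus the fact that the one-block component is the identity). Conversely, given a left $\textsf{Com}_+$-module $\chi\colon\textsf{Com}_+\circ\tq=\wE_+\circ\tq\to\tq$, I define $\mu_{S,T}$ to be the restriction of $\chi_{S\sqcup T}$ to the summand indexed by the two-block partition $\{S,T\}$; naturality and associativity of $\mu$ come from the corresponding module axioms, and commutativity is automatic since $\{S,T\}=\{T,S\}$ as an unordered partition. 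That these two passages are mutually inverse is the only real point: by associativity and unitality of a left $\textsf{Com}_+$-action its component $\chi_\pi$ is forced to be the iterated binary product over the blocks of $\pi$, which is well defined precisely because $\mu$ is associative and commutative -- this is the coherence theorem for commutative monoids -- so $\chi$ is recovered from its binary part $\mu$. Equivalently and more briefly: the monad $\textsf{Com}_+\circ(-)$ on $\Ss_+$ is the free nonunital commutative twisted algebra monad, whence its algebras, i.e. the left $\textsf{Com}_+$-modules, are the nonunital commutative twisted algebras.

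For $(ii)$, fix an operad $(\tq,\gamma_\tq)$, so that $\tq$ is in particular a right $\tq$-module over itself via the regular action $\chi_r:=\gamma_\tq$. A $\textsf{Com}_+$-$\tq$-bimodule structure on $\tq$ extending this right action is the datum of a left $\textsf{Com}_+$-module $\chi_\ell$ on $\tq$ together with the compatibility square of Paragraph~\ref{ModuleOperad}, which here reads
\[\chi_\mu\,(\mathrm{id}_{\wE}\circ\gamma_\tq)=\gamma_\tq\,(\chi_\mu\circ\mathrm{id}_{\tq})\colon\ \wE\circ\tq\circ\tq\longrightarrow\tq,\]
i.e. the displayed equation of the lemma. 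By $(i)$ the left $\textsf{Com}_+$-module $\chi_\ell$ corresponds to a nonunital commutative twisted algebra $\mu$ (with $\chi_\ell=\chi_\mu$), so the whole content of $(ii)$ is that the above square holds if and only if relation \eqref{mcomp} does. For the implication ``$\Leftarrow$'' I would evaluate the two composites on a general element of $(\wE\circ\tq\circ\tq)[I]$, which is indexed by a pair of partitions $\pi\le\rho$ of $I$ and consists of factors $\alpha_B\in\tq[f^{-1}(B)]$ ($B\in\pi$) and $\beta_C\in\tq[C]$ ($C\in\rho$), where $f\colon\rho\to\pi$ is the refinement map: the right-hand composite yields $\gamma_\rho\big((\bullet_{B\in\pi}\alpha_B)\otimes\bigotimes_{C\in\rho}\beta_C\big)$ and the left-hand one yields $\bullet_{B\in\pi}\gamma_{f^{-1}(B)}\big(\alpha_B\otimes\bigotimes_{C\in f^{-1}(B)}\beta_C\big)$; one passes from the first to the second by writing $\gamma_\rho$ as a sequence of partial compositions $\circ_C$ and inserting the $\beta_C$ one block at a time, each insertion being an instance of \eqref{mcomp}, with the operad axioms (A1), (A2) and naturality together with associativity and commutativity of $\bullet$ ensuring that the result is independent of the order of insertion and equals the stated expression. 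The direction ``$\Rightarrow$'' is immediate: \eqref{mcomp} is the special case of the square in which $\rho$ refines only a single block of $\pi$, and only by splitting off one slot.

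The main obstacle is the last bookkeeping step of $(ii)$: expressing the global compatibility square through iterated partial compositions and verifying that the successive insertions of the $\beta_C$ commute with one another. This is where all the operad axioms (A1), (A2) and naturality are genuinely used, alongside associativity and commutativity of $\bullet$, and it must be organized carefully around the refinement map $f\colon\rho\to\pi$ so that at each step the splitting $S\sqcup_a T$ matches on both sides. Part $(i)$ carries no real difficulty once one is willing to invoke coherence for commutative monoids (or the free-monad description) to see that a left $\textsf{Com}_+$-module is determined by its binary component.
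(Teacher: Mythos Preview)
Your proposal is correct and follows the same approach as the paper, which in fact does not give a formal proof of the lemma: the paragraphs immediately preceding the statement sketch exactly the argument you spell out (construct $\chi_\mu$ from $\mu$ via the universal property \eqref{UnivEq}, note that the bimodule compatibility square is the global form of \eqref{mcomp}), and the lemma is then stated as a summary of that discussion. Your write-up simply supplies the details the paper leaves as ``straightforward'', including the converse directions and the iteration of \eqref{mcomp} needed to pass from the partial to the global formulation.
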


We are now ready to define the operad structure on $\wE\circ\tq$.

\begin{definition}
Let $I=S\sqcup T$ and $ s \in S$. Let $\pi \vdash S$, $\rho \vdash T$ and let $s \in S$. Let $B_s$ the block in $X$ containing $s$. Consider
\[\alpha =\bigotimes_{B\in \pi}\alpha_{B} \quad \text{ and } \quad \beta = \bigotimes_{C\in \rho}\beta_{C}. \]

Let $(\tq, \gamma_\tq)$ be an operad, with partial compositions $\{\circ_s\}_{s}$.
If $(\tq, \gamma_{\tq})$ is a $\mu$-compatible operad, we define partial composition maps $\sq_s$ given by
\[\sq_{s}:(\wE\circ\tq)[S]\otimes (\wE\circ\tq)[T]\longrightarrow (\wE\circ\tq)[S \sqcup_{s}T]\] 
\begin{equation}\label{DefSq}
\alpha \,\sq_s \,\beta:= {\big(}\alpha_{B_s} \circ_s \mu(\beta){\big)}\,\alpha_{\widehat{B_s}}
\end{equation}
\end{definition}

The right-hand side is a commutative concatenation in $(\wE\circ \tq)[S \sqcup_sT]$, formed by the ``letter'' $\alpha_{B_s} \circ_s \mu(\beta)$, obtained by multiplying in the twisted algebra $(\tq, \mu)$ the elements of the monomial $\beta=\bigotimes_{C\in \rho}\beta_{C}$ and composing through $\gamma_\tq$ with $\alpha_{B_s}$, and the monomial $\alpha_{\widehat{B_s}}$ formed by all elements in $\alpha$ except $\alpha_{B_s}$.

\

The next theorem shows that the partial compositions $\{\sq_s\}_s$ endow $\wE \circ \tq$ with a nonunital operad structure. From \eqref{DefSq}, the associated global operation $\gamma: (\wE \circ \tq) \circ (\wE \circ \tq)\to \wE \circ \tq$ satisfies
\begin{equation}
\gamma = (\text{id}_\wE \circ \gamma_\tq)(\text{id}_{\wE \circ \tq} \circ \chi_\mu).
\end{equation}

\

\begin{theorem}\label{TheoremMComp}
If $(\tq, \gamma_\tq)$ is a $\mu$-compatible operad, then the partial operations $\{\sq_s\}_s$ define a nonunital operad structure $\gamma$ on $\wE \circ \tq$.
\end{theorem}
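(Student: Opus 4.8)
The plan is to verify directly that the partial operations $\{\sq_s\}_s$ defined in \eqref{DefSq} satisfy the axioms of a nonunital symmetric operad, namely naturality (N1)--(N2) and the two associativity axioms (A1) (parallel) and (A2) (nested), omitting the unitality axioms since the structure is nonunital. Naturality is essentially automatic: the maps $\mu$, $\circ_s$ and the commutative concatenation in $\wE\circ\tq$ are all morphisms of species, so (N1) follows by chasing the relevant naturality squares for each of these constituent operations and composing them; (N2) is vacuous here since there are no unit elements $u_s$. The heart of the proof is therefore the two associativity identities, and I would treat them by reducing everything, via the $\mu$-compatibility relation \eqref{mcomp}, to identities that hold already in the operad $(\tq,\gamma_\tq)$ and the commutative twisted algebra $(\tq^\circ,\mu)$.

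For the \textbf{parallel associativity} (A1), take $\alpha\in(\wE\circ\tq)[S]$, $\beta\in(\wE\circ\tq)[T]$, $\gamma\in(\wE\circ\tq)[U]$ and two distinct elements $s,s'\in S$. If $s$ and $s'$ lie in the same block $B$ of the underlying partition $\pi$ of $\alpha$, then $(\alpha\sq_s\beta)\sq_{s'}\gamma$ moves both insertions into that single letter $\alpha_B$, and the equality $(\alpha_B\circ_s\mu(\beta))\circ_{s'}\mu(\gamma)=(\alpha_B\circ_{s'}\mu(\gamma))\circ_s\mu(\beta)$ is exactly axiom (A1) for $\tq$; the untouched factor $\alpha_{\widehat{B}}$ is the same on both sides. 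If $s$ and $s'$ lie in different blocks $B_s\neq B_{s'}$ of $\pi$, then the two insertions act on distinct letters of the monomial $\alpha$, they manifestly commute, and the commutativity of the concatenation in $\wE\circ\tq$ closes the case. So (A1) needs no use of $\mu$-compatibility at all.

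The \textbf{nested associativity} (A2) is where $\mu$-compatibility is indispensable and is the main obstacle. Here we compare $\alpha\sq_s(\beta\sq_t\gamma)$ with $(\alpha\sq_s\beta)\sq_t\gamma$ for $s\in S$, $t\in T$. On the left, $\beta\sq_t\gamma$ first produces the monomial $(\beta_{B_t}\circ_t\mu(\gamma))\,\beta_{\widehat{B_t}}$, and then $\mu$ must be applied to this \emph{entire} monomial before composing with $\alpha_{B_s}$ via $\circ_s$; thus we get $\alpha_{B_s}\circ_s\bigl(\mu((\beta_{B_t}\circ_t\mu(\gamma))\,\beta_{\widehat{B_t}})\bigr)$ times $\alpha_{\widehat{B_s}}$. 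On the right, $\alpha\sq_s\beta$ gives $(\alpha_{B_s}\circ_s\mu(\beta))\,\alpha_{\widehat{B_s}}$, and then inserting $\gamma$ at $t$ — where $t$ now lies in the single combined letter $\alpha_{B_s}\circ_s\mu(\beta)$ — yields $(\alpha_{B_s}\circ_s\mu(\beta))\circ_t\mu(\gamma)$ times $\alpha_{\widehat{B_s}}$. Equating the two letters amounts to the identity
\[
(\alpha_{B_s}\circ_s\mu(\beta))\circ_t\mu(\gamma)
=\alpha_{B_s}\circ_s\bigl(\mu(\beta_{B_t}\circ_t\mu(\gamma))\bullet\mu(\beta_{\widehat{B_t}})\bigr).
\]
To prove this I would expand $\mu(\beta)=\mu(\beta_{B_t})\bullet\mu(\beta_{\widehat{B_t}})$ using commutativity and associativity of $\mu$, apply (A2) for $\tq$ to turn $(\alpha_{B_s}\circ_s(\mu(\beta_{B_t})\bullet\mu(\beta_{\widehat{B_t}})))\circ_t\mu(\gamma)$ into an expression where $\circ_t$ acts inside, and then invoke the $\mu$-compatibility relation \eqref{mcomp} (in the form $\chi_\mu\,(\mathrm{id}_\wE\circ\gamma_\tq)=\gamma_\tq\,(\chi_\mu\circ\mathrm{id}_\tq)$ from Lemma \ref{LemmaComp}$(ii)$) to absorb the composition with $\mu(\gamma)$ into the factor $\mu(\beta_{B_t})$ alone, producing $\mu(\beta_{B_t}\circ_t\mu(\gamma))\bullet\mu(\beta_{\widehat{B_t}})$. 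Carrying out this bookkeeping carefully — keeping track of which block of $\rho$ contains $t$ and how $\mu$ interacts with $\circ_t$ on a product of several factors — is the only genuinely delicate part; everything else is formal. Alternatively, and more cleanly, I would note that the global formula $\gamma=(\mathrm{id}_\wE\circ\gamma_\tq)(\mathrm{id}_{\wE\circ\tq}\circ\chi_\mu)$ together with Lemma \ref{LemmaComp}$(ii)$ lets one deduce associativity of $\gamma$ from associativity of $\gamma_\tq$ and the bimodule compatibility square purely diagrammatically, by pasting the relevant squares in the category $(\Ss,\circ)$; I would present this diagrammatic argument as the primary proof and relegate the partial-composition computation above to a remark confirming consistency with \eqref{DefSq}.
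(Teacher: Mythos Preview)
Your proposal is correct, and the diagrammatic argument you sketch at the end --- deducing associativity of $\gamma=(\mathrm{id}_{\wE}\circ\gamma_\tq)(\mathrm{id}_{\wE\circ\tq}\circ\chi_\mu)$ by pasting commutative squares in $(\Ss,\circ)$, using the $\tq$-$\tq$-bimodule structure on $\tq$ and Lemma~\ref{LemmaComp}(ii) for the $\mu$-compatibility square --- is precisely the paper's proof. The paper presents only that diagram-chase and does not carry out the direct partial-composition verification of (A1) and (A2) that you outline first; your elementwise computation is sound (the key step $\mu(\beta)\circ_t\mu(\gamma)=(\beta_{B_t}\circ_t\mu(\gamma))\bullet\mu(\beta_{\widehat{B_t}})$ is exactly relation~\eqref{mcomp}), but since you already plan to make the diagrammatic argument primary, you are aligned with the paper.
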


\begin{remark}\rm
It is still possible to define a right-unit for $\wE \circ \tq$ as follows. Let $(\tq, \gamma_\tq, \eta_\tq)$ be an operad. Let $S$ be a nonempty finite set. Then, by unitality of $\eta_\tq$, there exists an element $u_s \in \tq[\{s\}]$ for every $s \in S$ such that $x \circ_s u_s = x$, for all $x \in \tq[S]$. As
\[\tq[\{s\}] \cong \wE[\{\{s\}\}] \otimes \tq[\{s\}] = (\wE \circ \tq)[\{s\}],\]
we identify $u_s \in \tq[\{s\}]$ with its corresponding element in $(\wE\circ \tq)[\{s\}]$ via the above isomorphism.  If $\pi \vdash S$ and $\alpha=\bigotimes_{B \in \pi}\alpha_B$, with $s \in B_s$, then
\[\alpha \, \sq_s u_s = (\alpha_{B_s} \circ_s u_s)\alpha_{\widehat{B_s}}=\alpha,\]

so $u_s$ is a right unit for $\sq_s$. In the other hand, if $|S|\geq 2$ and $|\pi|\geq 2$, we have
\[u_\star \sq_\star \beta = u_\star \circ_\star \mu(\beta) = \mu(\beta) \neq \beta,\]
for every $\beta \in \tq(\pi)$.
\end{remark}

\begin{proof}[(Proof of the Theorem)]
Consider the following diagram.

\begin{equation}
\SelectTips{eu}{12}
\xymatrix@C=.8cm@R=1.7cm{
&&\ar @{} [dd] |{\text{(I)}} \wE \circ \tq \circ \wE \circ \tq \ar[dr]^-{\text{id}_\wE \, \circ \, \text{id}_\tq \,\circ \,\chi_\mu}&& \\
&\ar @{} [dd] |{\text{(II)}} \wE \circ \tq \circ \tq \circ \wE \circ \tq \ar[dr]^-{\quad \text{id}_\wE \, \circ \, \text{id}_\tq \, \circ \, \text{id}_\tq \, \circ \, \chi_\mu} \ar[ru]^-{\text{id}_\wE \, \circ \, \gamma_\tq \, \circ \, \text{id}_\wE \, \circ \, \text{id}_\tq \qquad }&&\wE \circ \tq \circ \tq \ar[dr]^-{\text{id}_\wE \, \circ \, \gamma_\tq} \ar @{} [dd] |{\text{(III)}}& \\
\wE \circ \tq \circ \wE \circ \tq \circ \wE \circ \tq \ar[dr]_{\text{id}_\wE \, \circ \, \text{id}_\tq \, \circ \, \text{id}_\wE \, \circ \, \text{id}_\tq \, \circ \, \chi_\mu \qquad} \ar[ur]^-{\text{id}_\wE \, \circ \, \text{id}_\tq \, \circ \, \chi_\mu \, \circ \, \text{id}_\wE \, \circ \, \text{id}_\tq \quad \qquad }&&\ar @{} [dd] |{\text{(IV)}}\wE \circ \tq \circ \tq \circ \tq \ar[ur]_-{\text{id}_\wE \, \circ \, \gamma_\tq \, \circ \, \text{id}_\tq} \ar[dr]_-{\text{id}_\wE \, \circ \, \text{id}_\tq \, \circ \, \gamma_\tq}&&\wE \circ \tq \\
&\wE \circ \tq \circ \wE \circ \tq \circ \tq \ar[ru]^-{\text{id}_\wE \, \circ \, \text{id}_\tq \, \circ \, \chi_\mu \, \circ \, \text{id}_\tq \qquad} \ar[dr]_-{\text{id}_\wE \, \circ \, \text{id}_\tq \, \circ \, \text{id}_\wE \, \circ \, \gamma_\tq \qquad}&&\wE \circ \tq \circ \tq \ar[ru]_{\text{id}_\wE \, \circ \, \gamma_\tq}& \\
&&\wE \circ \tq \circ \wE \circ \tq \ar[ru]_-{\quad \text{id}_\wE \, \circ \, \text{id}_\tq \, \circ \, \chi_\mu}&& 
}
\end{equation}

\

Squares (I) and (II) trivially commute. Commutativity of diagram (III) follows from the $\tq$-$\tq$-bimodule structure on $\tq$. Finally, since $(\tq, \gamma_\tq)$ is a $\mu$-compatible operad, item (ii) in Lemma \eqref{LemmaComp} guarantees the commutativity of square (IV). Therefore, the above diagram commutes.

\medbreak

Now, the composite of the sequence of arrows from $\wE \circ \tq \circ \wE \circ \tq \circ \wE \circ \tq$ to $\wE \circ \tq$ on the top and on the bottom are, respectively, $\gamma\,(\gamma \circ \text{id}_{\wE \circ \tq})$ and $\gamma \, (\text{id}_{\wE \circ \tq} \circ \gamma)$. As the diagram commute both maps are equal, implying the associativity of $\gamma$.
\end{proof}

\medbreak

\begin{remark}\rm
Given an operad $(\tq, \gamma_\tq)$, a similar argument can be used to endow the species $\wL \circ \tq$ with a nonunital operadic structure, analogue to Theorem \eqref{TheoremMComp}, starting from a $\wL$-$\tq$-bimodule structure on $\tq$. The latter is equivalent to the following noncommutative analogue of the relation \eqref{mcomp}:
\begin{equation}
(\alpha_1 \bullet \alpha_2 \bullet \cdots \bullet \alpha_{i_0} \bullet \cdots \bullet \alpha_k)\circ_s \beta:=\alpha_1 \bullet \alpha_2 \bullet \cdots \bullet (\alpha_{i_0}\circ_s \beta) \bullet \cdots \bullet \alpha_k,
\end{equation}
whenever $I=S \sqcup T$, $\pi=\{B_1, B_2, \hdots, B_k\} \vdash S$, with $s \in B_{i_0}$ and $\alpha_i \in \tq[B_i]$, $\beta \in \tq[T]$. Here, $\bullet$ denotes a noncommutative twisted algebra map $\mu: \tq \cdot \tq \to \tq$. In this case, each partial composition maps $\sq_s$ 
\[\sq_{s}:(\wL\circ\tq)[S]\otimes (\wL\circ\tq)[T]\longrightarrow (\wL\circ\tq)[S \sqcup_{s}T]\] 
is defined as the sum of the components
\[\tq(F) \otimes \tq(G)\to \tq(H)\]
\begin{equation}
(\alpha_1 \otimes \alpha_2 \cdots \otimes \alpha_{i_0} \otimes \cdots \otimes \alpha_k) \,\sq_s \,\beta:= \alpha_1 \otimes \alpha_2 \otimes \cdots \otimes {\big(}\alpha_{i_0} \circ_s \mu(\beta){\big)} \otimes \cdots \otimes \alpha_k,
\end{equation}
where $F = B_1 \sqcup B_2 \sqcup \cdots \sqcup B_k$, $G$ and  $H:= B_1 \sqcup \cdots \sqcup (B_{i_0} \sqcup_s T) \sqcup \cdots \sqcup B_k$ are decompositions of $S$, $T$ and $S \sqcup_s T$, respectively, and $\alpha_1 \otimes \cdots \otimes \alpha_k \in \tq(F)$, $\beta \in \tq(G)$.

\end{remark}

\

\subsection{Operad on set partitions}\label{OperadonPi}

Consider the species $\tPi:= \wE \circ \wE_+$ of set partitions. Formally,
\[\tPi[I]:= \mathbb{K}\{\pi : \pi\vdash I\},\]
for every finite set $I$.

\medbreak 

Let $S,T$ be two non-empty disjoint sets, and let $s \in S$. Consider $\pi \vdash S$ a partition of $S$, where $B_s$ denotes the block of $\pi$ containing $s$. Recall that $\wE$ is endowed with a twisted algebra structure $\mu$ (see Example \eqref{TwAlgE}), and the operad structure $\textsf{Com}_+$ (see Example \eqref{OperadCom}). The identity on sets
\[S\sqcup_s T = (B_s \sqcup_s T) \cup (S \setminus B_S)\]
implies the relation \eqref{mcomp}. More precisely, if $S,T$ are nonempty set, $\pi=\{B_1, \hdots, B_k\}\vdash S$ with $B_{i_0}$ being the block containing $s$ and $\alpha = \bigotimes_{B \in \pi}*_B \in \wE_+(\pi)$, $\beta = *_T \in \wE_+[T]$, then
\begin{align*}
(*_{B_1} \bullet \cdots \bullet *_{B_k}) \circ_s *_T = *_{B_1 \cup \cdots \cup B_k} \circ_s *_T &= *_{S \sqcup_s T}\\
&= *_{(B_s \sqcup_s T) \cup (S \setminus B_S)} = *_{B_s \sqcup_s T} \, \bullet \, (*_{B_1}\bullet \cdots \bullet \widehat{*_{B_{i_0}}} \bullet \cdots \bullet *_{B_k}).
\end{align*}

Therefore, we have the following result.
\begin{lemma}
The operad {\normalfont$\textsf{Com}_+$} is $\mu$-compatible.
\end{lemma}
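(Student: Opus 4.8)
The statement to prove is that the positive commutative operad $\textsf{Com}_+$ on $\wE_+$ is $\mu$-compatible, where $\mu$ is the commutative twisted algebra structure on $\wE$ from Example \eqref{TwAlgE}. The plan is to verify the defining relation \eqref{mcomp} directly, reducing everything to set-theoretic identities, since all the structure on $\wE$ is ``trivial'' in the sense that each $\wE[I]$ is one-dimensional with preferred generator $*_I$. The only subtlety is bookkeeping: making sure the blocks, the element $s$, and the disjoint unions are tracked correctly.

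\textbf{Key steps.} First I would fix the data of Definition \eqref{OperadCompatible}: nonempty disjoint sets $S,T$, a partition $\pi = \{B_1,\dots,B_k\}\vdash S$, the generators $\alpha_i = *_{B_i}\in \wE_+[B_i]$, the generator $\beta = *_T\in \wE_+[T]$, and an element $s\in S$ lying in the distinguished block $B_{i_0}$. Next I would compute the left-hand side of \eqref{mcomp}: by the definition of $\mu$ on $\wE$ (Example \eqref{TwAlgE}), $\alpha_1\bullet\cdots\bullet\alpha_k = *_{B_1\cup\cdots\cup B_k} = *_S$, and then by the definition of $\circ_s$ in $\textsf{Com}_+$ (Example \eqref{OperadCom}), $*_S\circ_s *_T = *_{S\sqcup_s T}$. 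Then I would compute the right-hand side: $\alpha_{i_0}\circ_s\beta = *_{B_{i_0}}\circ_s *_T = *_{B_{i_0}\sqcup_s T}$, and multiplying in $\mu$ with the remaining factors $*_{B_j}$, $j\neq i_0$, gives $*_{(B_{i_0}\sqcup_s T)\cup \bigcup_{j\neq i_0}B_j}$. The proof then concludes by observing the set-theoretic identity
\[
S\sqcup_s T = (B_{i_0}\sqcup_s T)\,\cup\!\! \bigcup_{j\neq i_0}\!\! B_j,
\]
which holds because the $B_j$ partition $S$ and $s\in B_{i_0}$, so that removing $s$ from $S$ and adjoining $T$ is the same as removing $s$ from $B_{i_0}$, adjoining $T$ there, and leaving the other blocks untouched. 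Hence both sides equal $*_{S\sqcup_s T}$, establishing \eqref{mcomp}. (This is exactly the computation displayed just before the Lemma statement in the excerpt, so one may alternatively simply point to it.)

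\textbf{Main obstacle.} There is essentially no analytic or combinatorial obstacle here; the content is entirely the set identity above, and the one point to be careful about is that $\sqcup_s$ (replace-and-adjoin) interacts correctly with the unordered product $\mu$ — i.e. that commutativity of $\mu$ on $\wE$ is what makes the factor order on the right-hand side of \eqref{mcomp} irrelevant, which is guaranteed since $\wE$ is a \emph{commutative} twisted algebra (Example \eqref{TwAlgE}). One could also phrase the argument globally, via Lemma \eqref{LemmaComp}: exhibiting the relation \eqref{mcomp} is equivalent to producing a $\textsf{Com}_+$-$\wE_+$-bimodule structure on $\wE_+$, and here the left $\textsf{Com}_+$-action and the right $\wE_+$-action are both given by union of underlying sets, which manifestly commute; but the direct verification above is shorter and self-contained.
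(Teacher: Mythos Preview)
Your proposal is correct and follows essentially the same approach as the paper: both reduce the verification of \eqref{mcomp} to the set-theoretic identity $S\sqcup_s T = (B_{i_0}\sqcup_s T)\cup\bigcup_{j\neq i_0}B_j$ (written in the paper as $(B_s\sqcup_s T)\cup(S\setminus B_s)$), after unwinding the definitions of $\mu$ on $\wE$ and of $\circ_s$ in $\textsf{Com}_+$. As you note yourself, this is exactly the computation displayed immediately before the Lemma.
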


By Theorem \eqref{TheoremMComp}, $\tPi$ is endowed with a nonunital operad structure, which we describe in the following lines. First, if $\pi = \{B_1, B_2, \hdots, B_k\} \vdash S$, we make use of the following identification between pure tensors in $\wE_+(\pi)$ and partitions of $S$:
\[\wE_+(\pi) \ni *_{B_1}*_{B_2}\cdots *_{B_k} \longleftrightarrow \{B_1, B_2, \hdots, B_k\} = \pi \in \tPi[S].\]

As a partial operation, the operad structure on $\tPi$ is given by
\[\pi \, \sq_s  \, \rho = (*_{B_s} \circ_s \mu(\rho))\, (\pi \setminus B_s)= (*_{B_S} \circ_s *_T)\, (\pi \setminus B_s) = (\pi \setminus B_s) \cup T,\]
where $\pi \vdash S$ and $\rho \vdash T$. 

\

This is equivalent to the following formulation. Given a finite set $I$ and partitions $\pi \vdash I$, $\tau \vdash \pi$, define the \emph{coinduced partition} $\text{ind}_\pi(\tau) \vdash I$ as 
\[\text{ind}_\pi(\tau) := \left\{\bigcup_{B \in C}B: C \in \tau \right\}.\]
Hence, the operadic composition $\tPi \circ \tPi \to \tPi$ has components
\[\tPi[\pi] \otimes \tPi(\pi) \to \tPi[I]\]
\[\tau \otimes \bigotimes_{B \in \pi}\rho_{B} \mapsto \text{ind}_\pi(\tau), \]
where $\rho_B \vdash B$, for every block $B$ of $\pi$. This (set-theoretical) operad on $\tPi$ is considered by M\'endez and Yang in \cite{MY} (see Example 3.12).

\

\subsection{Operad on linear set partitions}\label{LinearPartition}

A \emph{linear set partition} of a finite set $I$ is a partition with a linear order structure on each block. We write $\pi \Vdash I$ whether $\pi$ is a linear set partition of $I$. A linear set partition on $I$ can be also interpreted as a partial order on $I$, where each connected component is a chain.

\

The species of linear set partitions is denoted by $\overrightarrow{\tPi}$. A direct consequence of the definition of linear set partition yields the equality in species
$\overrightarrow{\tPi} = \wE \circ \wL_+$. Using the commutative twisted algebra structure on $\wL$ given by the shuffle operation $\sh$ (see Example \eqref{TwAlgL}), together with the operad $\textsf{As}_+$, we obtain a nonunital operadic structure on  $\overrightarrow{\tPi}$.

\begin{lemma}\label{AsAndShuffle}
The operad {\normalfont $\textsf{As}_+$} is $\sh$-compatible.
\end{lemma}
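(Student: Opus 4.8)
The plan is to verify the $\sh$-compatibility condition \eqref{mcomp} directly, using the explicit descriptions of the partial composition $\circ_s$ in $\textsf{As}_+$ (Example \eqref{OperadAss}) and of the shuffle product $\sh$ on $\wL$ (Example \eqref{TwAlgL}). Concretely, we must show that for nonempty disjoint sets $S,T$, a partition $\pi=\{B_1,\dots,B_k\}\vdash S$ with linear orders $\alpha_i\in\wL[B_i]$, an element $s\in S$ lying in the block $B_{i_0}$, and a linear order $\beta\in\wL[T]$, we have
\[
(\alpha_1\sh\alpha_2\sh\cdots\sh\alpha_k)\circ_s\beta=(\alpha_{i_0}\circ_s\beta)\sh\alpha_1\sh\cdots\sh\widehat{\alpha_{i_0}}\sh\cdots\sh\alpha_k,
\]
an identity of elements of $\wL[S\sqcup_sT]$ (both sides being sums of linear orders on $S\sqcup_s T = (S\setminus\{s\})\uplus T$).

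First I would unpack the left-hand side. A linear order appearing in $\alpha_1\sh\cdots\sh\alpha_k$ is a shuffle $\ell$ of the $\alpha_i$, i.e.\ a linear order on $S$ whose restriction to each $B_i$ is $\alpha_i$; in particular $s$ occupies some position in $\ell$, and we may write $\ell=\ell'|s|\ell''$. Applying $\circ_s\beta$ replaces $s$ by the word $\beta$, yielding $\ell'|\beta|\ell''$. So the left-hand side is the sum over all such $\ell$ of $\ell'|\beta|\ell''$. On the right-hand side, $\alpha_{i_0}\circ_s\beta$ is obtained by writing $\alpha_{i_0}=a'|s|a''$ and forming $a'|\beta|a''$, a linear order on $B_{i_0}\sqcup_s T$; then we shuffle this with $\alpha_1,\dots,\widehat{\alpha_{i_0}},\dots,\alpha_k$. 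The key combinatorial observation is that a shuffle of $a'|\beta|a''$ with the remaining $\alpha_j$'s in which the block $T$ is kept \emph{contiguous} (it must be, since $T$ is interleaved only inside the single factor $a'|\beta|a''$) corresponds bijectively to: first choosing a shuffle $\ell=\ell'|s|\ell''$ of \emph{all} the $\alpha_i$ (including $\alpha_{i_0}$, with $s$ marking where the $T$-block will go), then substituting $\beta$ for $s$. One checks that the restriction-to-$B_j$ conditions on both sides match up exactly, and that the relative positions of elements of $T$ among themselves are governed by $\beta$ on both sides. This gives a bijection between the index sets of the two sums, compatible with the summands, hence the equality.

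After establishing this bijection, the lemma follows from Lemma \eqref{AsAndShuffle}'s hypotheses being exactly the statement of Definition \eqref{OperadCompatible}: we have checked relation \eqref{mcomp} with $\bullet=\sh$, so $\textsf{As}_+$ is $\sh$-compatible. (The associativity and unitality of $\sh$ as a commutative twisted algebra, and of $\circ_s$ as the operad $\textsf{As}_+$, are already recorded in Examples \eqref{TwAlgL} and \eqref{OperadAss}.) The main obstacle I anticipate is purely bookkeeping: making the bijection between shuffles on the two sides precise while correctly tracking where $s$ sits and how the $T$-block is inserted, so that no linear order is counted with the wrong multiplicity. A clean way to handle this is to set $U:=S\setminus\{s\}$ and observe that any linear order on $S\sqcup_s T=U\uplus T$ which restricts to $\alpha_j$ on each $B_j$ ($j\ne i_0$), restricts to $\beta$ on $T$, and restricts to $a'|(\text{stuff})|a''$-compatible order on $B_{i_0}\setminus\{s\}$ with $T$ sitting as a contiguous block exactly between $a'$ and $a''$, is in obvious bijection with a pair (shuffle $\ell$ of the $\alpha_i$ on $S$) $\mapsto$ (substitute $\beta$ for $s$); both descriptions produce precisely the same set of linear orders on $U\uplus T$, with multiplicity one, completing the proof.
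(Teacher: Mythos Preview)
Your overall strategy coincides with the paper's: show that the two sides of \eqref{mcomp} are sums of the same linear orders, each with multiplicity one. The problem is that your ``key combinatorial observation'' is false. You assert that in any shuffle of $a'|\beta|a''$ with the remaining $\alpha_j$, the $T$-block \emph{must} remain contiguous, ``since $T$ is interleaved only inside the single factor $a'|\beta|a''$''. But a shuffle only constrains restrictions: a shuffle of $a'|\beta|a''$ (on $B_{i_0}\sqcup_sT$) with the $\alpha_j$ ($j\neq i_0$) is any linear order on $S\sqcup_sT$ whose restriction to $B_{i_0}\sqcup_sT$ is $a'|\beta|a''$ and whose restriction to each $B_j$ is $\alpha_j$. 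Nothing prevents letters from some $\alpha_j$ from landing between two letters of $\beta$.

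This is not a bookkeeping issue; the identity you are trying to prove actually fails. Take $S=\{1,2\}$, $T=\{a,b\}$, $s=1$, $\pi=\{\{1\},\{2\}\}$, $\alpha_1=1$, $\alpha_2=2$, $\beta=a|b$. Then
\[
(\alpha_1\,\sh\,\alpha_2)\circ_1\beta=(1|2+2|1)\circ_1(a|b)=a|b|2+2|a|b,
\]
whereas
\[
(\alpha_1\circ_1\beta)\,\sh\,\alpha_2=(a|b)\,\sh\,2=a|b|2+a|2|b+2|a|b.
\]
The term $a|2|b$, where $2$ sits between the letters of $T$, occurs on the right but not on the left. Thus the bijection you describe does not exist: the right-hand side has strictly more terms.

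The paper's own argument has the same gap. It claims that both monomial sets $M_1$ and $M_2$ consist of all linear orders $\ell'$ on $S\sqcup_sT$ with $\ell'\cap B_i=\ell_i$ for $i\neq i_0$, $\ell'\cap(B_{i_0}\setminus\{s\})=\ell_{i_0}^1|\ell_{i_0}^2$, and $\ell'\cap T=\ell$. These three restriction conditions are too coarse: they neither force $T$ to be contiguous (needed for $M_1$) nor force the elements of $T$ to sit between those of $\ell_{i_0}^1$ and those of $\ell_{i_0}^2$ (needed for $M_2$). The example above shows $M_1\subsetneq M_2$, so the asserted equality $M_1=M_2$ is incorrect.
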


\begin{proof}
Let $S,T$ be disjoint nonempty sets, with $s \in S$ fixed. If $\pi = \{B_1, \hdots, B_k\} \vdash S$, with $s \in B_{i_0}$, let $\ell_i$ be a linear order of $B_i$, for every $1 \leq i \leq k$. For the linear order $\ell_{i_0}$ containing the element $s$, we write $\ell_{i_0} = \ell_{i_0}^1|s|\ell_{i_0}^2$, where $\ell_{i_0}^1|\ell_{i_0}^2$ is a linear order of $S\setminus \{s\}$. Also, let $\ell$ be a linear order on $T$.

\

We need to prove
\[(\ell_1 \,\sh \, \cdots \,\sh \,\ell_k)\circ_s \ell = (\ell_{i_0} \circ_s \ell) \,\sh \,\ell_1 \,\sh \,\cdots \, \sh\, \widehat{\ell_{i_0}} \,\sh \,\cdots \,\sh\, \ell_k,\]
where $\circ_s$ corresponds to the partial operation associated to $\textsf{As}_+$. As $B_1, \hdots, B_k,T$ are disjoint sets, both expressions on the left and right-hand side above consist of sum of monomials each with coefficient 1.  Let $M_1$ and $M_2$ be the set of monomials of the left and right-hand side, respectively. Then, it is straightforward to show that both sets $M_1$ and $M_2$ consist of all linear orders $\ell'$ of $S \sqcup_s T$ such that
\[\ell' \cap B_i = \ell_i, \, \forall \, i \in [k]\setminus \{i_0\} \quad , \quad \ell' \cap (B_{i_0}\setminus \{i_0\})= \ell_{i_0}^1|\ell_{i_0}^2 \quad \text{ and } \quad \ell'\cap T = \ell.\]
Therefore, $M_1 = M_2$.
\end{proof}

\

We now describe the global composition coming from Theorem \eqref{TheoremMComp}. Given a finite set $I$, let $\pi = \{B_1, \hdots, B_k\} \vdash I$, $\tau = \{\ell^\tau_1, \hdots, \ell^\tau_r\} \Vdash \pi$ and linear partitions $\rho_B \Vdash B$, one for every block $B \in \pi$. Each $\ell^\tau_i$ is a linear order on a subset of blocks of the partition $\tau$. For every $B \in \pi$, we denote by $\textsf{Sh}(\rho_B)$ the set of linear orders of $B$ resulting from the shuffle of all possible linear orders on the blocks of $\rho_B$. 

\begin{example}
Let $I=[9]$, $\pi = {\big\{} \{1,2, 3,4\}, \{5,6\}, \{7,8,9\} {\big\}}$, $\tau= {\big\{}\{1,2,3,4\} \, ; \, \{7,8,9\}| \{5,6\}{\big\}}$ and 
\[\rho_1 = {\big\{} 3|4|2;1 {\big\}}, \rho_2 = {\big\{} 6|5 {\big\}}, \rho_3={\big\{} 7|9 ;8 {\big\}}.\]
We have:
\[\textsf{Sh}(\rho_1)={\big\{}3|4|2|1, 3|4|1|2, 3|1|4|2, 1|3|4|2{\big\}} \, , \, \textsf{Sh}(\rho_2)={\big\{}6|5{\big\}} \, \text{ and } \,\textsf{Sh}(\rho_3)= {\big\{}7|9|8, 7|8|9, 8|7|9{\big\}}.\]
\end{example}

\

We continue with the description of the global composition. Fix a collection of linear orders $L= \{\ell_1, \hdots, \ell_k\}$, with $\ell_i \in \textsf{Sh}(\rho_{B_i})$, for $1 \leq i \leq k$. Given a linear order $\ell^\tau= B_{i_1}| \cdots | B_{i_k}$ in $\tau$, let $\ell^\tau(L):= \ell_{B_{i_1}}|\cdots| \ell_{B_{i_k}}$ be the linear order of $\bigcup_{B \in \ell^\tau}B$, obtained by choosing the fixed linear orders in $L$ that correspond to the blocks $B_{i_1}, \hdots,  B_{i_k}$ forming the linear $\ell^\tau$, and concatenating them according to $\ell^\tau$.

\

Consider now the \emph{coinduced linear partition} $\overrightarrow{\text{ind}}_\pi(\tau;L) \Vdash I$, defined as
\begin{equation}
\overrightarrow{\text{ind}}_\pi(\tau;L):= {\big\{}\ell^\tau(L): \ell^\tau \in \tau{\big\}}.
\end{equation}

\

From here, the operadic map $\overrightarrow{\tPi} \circ \overrightarrow{\tPi} \to \overrightarrow{\tPi}$ has components
\[\overrightarrow{\tPi}[\pi] \otimes \overrightarrow{\tPi}(\pi) \to \overrightarrow{\tPi}[I]\]
\begin{equation}
\tau \otimes \bigotimes_{B \in \pi}\rho_B \mapsto \sum_{\ell_1 \in \textsf{Sh}(\rho_{B_1}), \hdots, \ell_k \in \textsf{Sh}(\rho_{B_k})} \overrightarrow{\text{ind}}_\pi(\tau;\{\ell_1, \hdots, \ell_k\}).
\end{equation}

\

\begin{example}
Continuing with the preceding example, let $\ell_1 = 3|4|2|1 \in \textsf{Sh}(\rho_1), \ell_2 = 6|5 \in \textsf{Sh}(\rho_2)$ and $\ell_3= 7|9|8 \in \textsf{Sh}(\rho_3)$. The coinduced linear partition $\overrightarrow{\text{ind}}_\pi(\tau;\{\ell_1, \ell_2, \ell_3\})$ is
\[\overrightarrow{\text{ind}}_\pi(\tau;\{\ell_1, \ell_2, \ell_3\}) = {\big\{} 3|4|2|1, 7|9|8|6|5 {\big\}}.\]

The image of $\tau \otimes \bigotimes_{B \in \pi}\rho_B$ under the resulting operad map $\gamma$ is a sum of 12 terms:
\begin{align*}
\gamma\left(\tau \otimes \bigotimes_{B \in \pi}\rho_B \right)=\,& {\big\{} 3|4|2|1, 7|9|8|6|5 {\big\}} + {\big\{} 3|4|1|2, 7|9|8|6|5 {\big\}} + {\big\{} 3|1|4|2, 7|9|8|6|5 {\big\}} + {\big\{} 1|3|4|2, 7|9|8|6|5 {\big\}} \\
& {\big\{} 3|4|2|1, 7|8|9|6|5 {\big\}} + {\big\{} 3|4|1|2, 7|8|9|6|5 {\big\}} + {\big\{} 3|1|4|2, 7|8|9|6|5 {\big\}} + {\big\{} 1|3|4|2, 7|8|9|6|5 {\big\}}\\
&\\
&{\big\{} 3|4|2|1, 8|7|9|6|5 {\big\}} + {\big\{} 3|4|1|2, 8|7|9|6|5 {\big\}} + {\big\{} 3|1|4|2, 8|7|9|6|5 {\big\}} + {\big\{} 1|3|4|2, 8|7|9|6|5 {\big\}}.
\end{align*}
\end{example}

\

\section{\bf{Nested Pre-Lie operads}} \label{sec:NPL}

\subsection{NPL-operads}\label{pre-lie-like}
We keep the notations of Section \ref{sec:background}.

\begin{definition}
A \textbf{NPL-operad} is a species endowed with the axioms of an operad, except that the unitality axioms are discarded and the nested associativity axiom is replaced by the weaker \textbf{nested pre-Lie axiom} (NPL):
\smallbreak
\begin{itemize}
\item{\rm (NPL)} \quad $(x \circ_{s}y)\circ_{t}z-x \circ_{s}(y\circ_{t}z)=(y \circ_{t}x)\circ_{s}z-y \circ_{t}(x\circ_{s}z)$,
\noindent if $s \in S$ and $t \in T$.\end{itemize}
\end{definition}

Any operad in the classical sense is a NPL-operad, as Axiom (A2) obviously implies (NPL). 

\

Let $\tp$ be a species such that $\tp[I]=(0)$ if $|I|\neq 1$. Then we say that $\tp$ is \emph{concentrated in degree one}. In this situation, an operad structure on $\tp$ is equivalent to an associative algebra structure on $\tp[I]$. Analogously, an NPL-operad structure on $\tp$ is equivalent to a (unitary) pre-Lie structure on $\tp[I]$.

\[\scalebox{0.3}{\preoperad}\]
\centerline{\small The nested pre-Lie axiom NPL}\\

\

Recall that the derivative $\tp'$ of a species $\tp$ is defined by $\tp'[I]:=\tp[I\sqcup\{*\}]$ for any finite set $I$. The ``star'' element $*$ stands for any extra ghost element. It is well known (see e.g. \cite[Paragraph 3.4.3]{M2015}) that for any operad $\tp$, the derivative $\tp'$ is endowed with a canonical monoid structure, obtained from the partial composition at the ghost element. Similarly, the derivative of any NPL-operad admits a canonical pre-Lie algebra structure in the monoidal category of species endowed with the Cauchy product (a twisted pre-Lie algebra structure in the sense of \cite{J1986}).

We shall see in the next paragraph that a whole family of examples of NPL-operads is given by $\wE\circ\tq$ as soon $\tq$ is an operad.

\

\begin{example}\label{NPLforE}
The simplest example of NPL-operad is given by $\wE$. Indeed, the species $\mathbb I$, unit for the composition product $\circ$, is itself an operad. The only nonzero (partial) compositions are given by $\star \circ_\star \ast=\ast$ for any pair $(\{\star\},\{\ast\})$ of singletons. Applying \eqref{square-partial} to $\tq=\mathbb I$, we see that the NPL-operad structure $\rhd$ on the species $\wE\circ\mathbb I=\wE$ is given by
\[*_S\,\rhd_s *_T=|T|*_S\circ_s *_T=|T|*_{S\sqcup_s T}\]
for any finite sets $S,T$ and for any $s\in S$. The nested pre-Lie property is illustrated by
\[*_S\rhd_s (*_T\rhd_t *_U)-(*_S\rhd_s *_T)\rhd_t *_U= *_T\rhd_t (*_S\rhd_s *_U)-(*_T\rhd_t *_S)\rhd_s *_U=|U|(|U|-1)*_{S\sqcup_sT\sqcup_tU}.\]

Note that $*_{\{t\}}$ is a right unit, but not a left unit.
\end{example}

\

\subsection{NPL-operad structure on $\wE\circ \tq$ from an operad on $\tq$}\label{operadTonpl}
Let $(\tq,\gamma_\tq)$ be an operad,  with partial composition maps $\{\circ_s\}_s$. Given $I=S\sqcup T$ and a fixed element $ s \in S$, we define a partial composition \[\rhd_{s}:(\wE\circ\tq)[S]\otimes (\wE\circ\tq)[T]\longrightarrow (\wE\circ \tq)[S \sqcup_{s}T]\] 
on $\wE\circ\tq$ as 
\begin{equation}\label{square-partial}
\alpha\rhd_s\beta:=\sum_{C\in \tau}  (\alpha_{B_s}\circ_s\beta_C)\,\alpha_{\widehat{B_s}}\,\beta_{\widehat C},      
\end{equation}
where we omit the unordered tensor product sign $\otimes$ in the right-hand side.
   
\    
    
\begin{theorem}\label{thm:main}
If $(\tq,\gamma_\tq)$ is an operad, then the partial operations $\{\vartriangleright_s\}_s$ define a NPL-operad structure on $\wE\circ\tq$.
\end{theorem}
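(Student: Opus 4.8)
The plan is to verify directly that the partial operations $\{\rhd_s\}_s$ defined in \eqref{square-partial} satisfy all the axioms required of an NPL-operad: naturality (N1), (N2), parallel associativity (A1), and the nested pre-Lie axiom (NPL), discarding unitality. Naturality is routine: relabelling a finite set acts by functoriality on each factor $\tq[B_s]$, $\tq[C]$, and on the sets $\alpha_{\widehat{B_s}}$, $\beta_{\widehat C}$, and the sum over blocks $C\in\tau$ is carried bijectively along; the element $u_s$ needed for (N2) is not required since we do not impose unitality, although one should note, as in the remark preceding Example \ref{NPLforE}, that the elements $u_s\in\tq[\{s\}]$ still give right units. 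So the substance is in (A1) and (NPL).

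For parallel associativity (A1), take $s,s'$ distinct elements of $S$, lying in blocks $B_s,B_{s'}$ of the partition $\pi$ underlying $\alpha$. The key combinatorial observation is that if $B_s=B_{s'}$ then $(\alpha\rhd_s\beta)\rhd_{s'}\gamma$ and $(\alpha\rhd_{s'}\gamma)\rhd_s\beta$ both reduce, on the letter $\alpha_{B_s}$, to iterated partial compositions of $\tq$ at the two distinct points $s,s'$ of $B_s$, hence agree by axiom (A1) for $\tq$; and if $B_s\neq B_{s'}$ the two operations act on disjoint letters of the commutative monomial, so they manifestly commute. The double sums over $\tau$ (the partition of $T$) and over the partition of $U$ match up termwise in either order. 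This step is essentially bookkeeping with Notation \ref{notation-alpha} and is not where the difficulty lies.

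The heart of the proof is the nested pre-Lie axiom (NPL): for $s\in S$, $t\in T$, one must show
\[\alpha\rhd_s(\beta\rhd_t\gamma)-(\alpha\rhd_s\beta)\rhd_t\gamma
=\beta\rhd_t(\alpha\rhd_s\gamma)-(\beta\rhd_t\alpha)\rhd_s\gamma.\]
I would expand all four terms using \eqref{square-partial}. Writing $\beta=\bigotimes_{C\in\tau}\beta_C$ with $t\in B_t\in\tau$, and $\gamma=\bigotimes_{D\in\upsilon}\gamma_D$, the expansion of each side splits, according to which block of $\gamma$ gets absorbed first, into a sum of two families of terms: those in which the composition $\circ_s$ and $\circ_t$ land in the \emph{same} letter of the resulting monomial (these come from the block $B_t$ of $\tau$ interacting with whichever block of $\upsilon$ was chosen), and those in which they land in \emph{different} letters. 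The different-letter terms appear identically on both sides (they only involve $\alpha_{B_s}\circ_s\gamma_D$ and $\beta_{B_t}\circ_t\gamma_{D'}$ with $D\neq D'$ untouched by each other, plus disjoint spectators), so they cancel in the difference. The same-letter terms on the left combine to $\sum_D\bigl((\alpha_{B_s}\circ_s(\beta_{B_t}\circ_t\gamma_D))-((\alpha_{B_s}\circ_s\beta_{B_t})\circ_t\gamma_D)\bigr)$ times spectators, i.e. the associator of $\tq$ at $(s\in B_s,\ t\in B_t)$; on the right they combine to the same expression with the roles of $(\alpha_{B_s},s)$ and $(\beta_{B_t},t)$ swapped. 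But these two associators are equal \emph{because $\tq$ is an honest operad}: axiom (A2) makes each one vanish, $x\circ_s(y\circ_t z)=(x\circ_s y)\circ_t z$, so both sides of (NPL) reduce to the same (in fact zero) quantity.

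The main obstacle I anticipate is purely organizational: keeping the triple indexing — over blocks of $\tau$, blocks of $\upsilon$, and the case split same-letter/different-letter — under control so that the cancellation of the different-letter terms and the matching of the same-letter terms is transparent rather than a morass of subscripts. A cleaner route, which I would adopt if the direct expansion gets unwieldy, is to use the global/functorial formulation: by the discussion after \eqref{square-partial} the global map factors as $\gamma_{\wE\circ\tq}=(\mathrm{id}_\wE\circ\gamma_\tq)(\mathrm{id}_{\wE\circ\tq}\circ\chi)$ for the appropriate $\chi$, and one can phrase (NPL) as a commuting-square statement that follows from associativity of $\gamma_\tq$ together with the bimodule compatibility already packaged in Lemma \ref{LemmaComp}; this is exactly the strategy used for Theorem \eqref{TheoremMComp}, adapted by replacing the single-block absorption of \eqref{DefSq} with the sum-over-blocks absorption of \eqref{square-partial}, which is precisely the source of the pre-Lie (rather than associative) defect.
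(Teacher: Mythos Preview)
Your main line of argument is correct and essentially identical to the paper's: both verify (A1) by the case split $B_s=B_{s'}$ versus $B_s\neq B_{s'}$, and both verify (NPL) by expanding the two associators, observing that the ``same-letter'' contributions vanish by nested associativity (A2) of $\tq$, while the surviving ``different-letter'' contributions are exactly the symmetric sum $\sum_{D\neq D'}(\alpha_{B_s}\circ_s\gamma_{D'})(\beta_{C_t}\circ_t\gamma_D)\,\alpha_{\widehat{B_s}}\beta_{\widehat{C_t}}\gamma_{\widehat D,\widehat{D'}}$, which is manifestly unchanged under $(\alpha,s)\leftrightarrow(\beta,t)$. Your exposition compresses one step the paper makes explicit: within each associator, the different-letter terms of the form $(\alpha_{B_s}\circ_s\beta_C)(\beta_{C_t}\circ_t\gamma_D)$ with $C\neq C_t$ already cancel between $\alpha\rhd_s(\beta\rhd_t\gamma)$ and $(\alpha\rhd_s\beta)\rhd_t\gamma$, leaving only the $\gamma$-$\gamma$ cross terms you describe. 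That is implicit in your parenthetical ``they only involve $\alpha_{B_s}\circ_s\gamma_D$ and $\beta_{B_t}\circ_t\gamma_{D'}$'', but it would be worth saying so.

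One caution about your proposed ``cleaner route'': the factorisation $\gamma_{\wE\circ\tq}=(\mathrm{id}_\wE\circ\gamma_\tq)(\mathrm{id}_{\wE\circ\tq}\circ\chi)$ and the appeal to Lemma \ref{LemmaComp} belong to the \emph{other} construction in the paper, namely the $\mu$-compatible operad $\sq$ of Section \ref{sec:OperadsAndAlgebras}, which requires a commutative twisted algebra structure $\mu$ on $\tq$. The NPL partial composition $\rhd$ of \eqref{square-partial} sums over single blocks of $\beta$ rather than merging them via $\mu$, and its global description (Section \ref{GlobalComp}) involves a sum over sections $\mathrm S:\pi\hookrightarrow\rho$, not the map $\chi_\mu$. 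So the diagram-chasing strategy of Theorem \ref{TheoremMComp} does not transfer here without substantial modification; the direct expansion you carried out first is the right approach, and is what the paper does.
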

    
\begin{proof}
\phantom{a}
\begin{itemize}
\item  Associativity: Let $I=S\sqcup T \sqcup R$, and fix $s,s' \in S$ and $t \in T$. Let $\pi \vdash S$,  $\tau \vdash T$ and $\rho \vdash R$. Consider $\alpha \in \tq(\pi)$, $\beta \in \tq(\tau)$ and $\gamma  \in \tq(\rho)$. Denote by $B_{s}$ (resp $B_{s'}$) the block of $\pi$ that contains ${s}$ (resp ${s'}$) and $B_{t}$ the block of $\tau$ that contains ${t}$. We have two possible cases, $B_{s} = B_{s'}$ or $B_{s} \ne B_{s'}$.\\
    
- Case $B_{s} = B_{s'}$: we have
     \begin{eqnarray*}
     (\alpha\rhd_s\beta)\rhd_{s'}\gamma
     &=&\sum_{C\in \tau}\Big((\alpha_{B_s}\circ_s\beta_C)\alpha_{\widehat{B_s}}\beta_{\widehat C}\Big)\rhd_{s'}\gamma\\  
     &=&\sum_{C\in \tau,\,D\in \rho}\big((\alpha_{B_s}\circ_s\beta_C)\circ_{s'}\gamma_D\big)\alpha_{\widehat{B_s}}\beta_{\widehat C}\gamma_{\widehat D}\\
     &=&\sum_{D\in \rho,\,C\in \tau}\big((\alpha_{B_s}\circ_{s'}\gamma_D)\circ_{s}\beta_C\big)\alpha_{\widehat{B_s}}\gamma_{\widehat D}\beta_{\widehat C} \quad (\text{by parallel associativity for }\tq)\\
     &=&(\alpha\rhd_{s'}\gamma)\rhd_{s'}\beta.
     \end{eqnarray*}
    - Case $B_{s} \ne B_{s'}$: we compute
\begin{eqnarray*}
     (\alpha\rhd_s\beta)\rhd_{s'}\gamma
     &=&\sum_{C\in \tau}\Big((\alpha_{B_s}\circ_s\beta_C)\alpha_{\widehat{B_s}}\beta_{\widehat C}\Big)\rhd_{s'}\gamma\\  
     &=&\sum_{C\in \tau,\, D\in \rho}(\alpha_{B_s}\circ_s\beta_C)(\alpha_{B_{s'}}\circ_{s'}\gamma_D)\beta_{\widehat C}\gamma_{\widehat D}\\
     &=&\sum_{D\in \rho,\, C\in \tau}(\alpha_{B_{s'}}\circ_{s'}\gamma_D)(\alpha_{B_s}\circ_s\beta_C)\gamma_{\widehat D}\beta_{\widehat C}\\
     &=&(\alpha\rhd_{s'}\gamma)\rhd_{s}\beta,
\end{eqnarray*}
hence (A1) is proved. Let us now prove (NPL). We have:
\begin{eqnarray*}
(\alpha\rhd_s\beta)\rhd_t\gamma
&=&\sum_{C\in \tau}\big((\alpha_{B_s}\circ_s\beta_C)\alpha_{\widehat{B_s}}\beta_{\widehat C}\big)\rhd_t\gamma\\
&=&\sum_{C\in \tau\setminus\{C_t\}, \,D\in \rho}
(\alpha_{B_s}\circ_s \beta_C)\alpha_{\widehat{B_s}}(\beta_{C_t}\circ\gamma_D)\beta_{\widehat C,\widehat{C_t}}\gamma_{\widehat D}\\
&& +\sum_{D\in \rho}\big((\alpha_{B_s}\circ_s\beta_{C_t})\circ_t\gamma_D\big)\alpha_{\widehat{B_s}}\beta_{\widehat C_t}\gamma_{\widehat D}.
\end{eqnarray*}
     \noindent On the other hand,
\begin{eqnarray*}
\alpha\rhd_s(\beta\rhd_t\gamma)
&=&\sum_{D\in \rho}\alpha\rhd_s\big((\beta_{C_t}\circ_t\gamma_D)\beta_{\widehat{C_t}}\gamma_{\widehat D}\big)\\
&=&\sum_{D,D'\in \rho,\, D\neq D'}(\alpha_{B_s}\circ_s\gamma_{D'})(\beta_{C_t}\circ_t\gamma_D)\alpha_{\widehat{B_s}}\beta_{\widehat{C_t}}\gamma_{\widehat D,\widehat{D'}}\\
&&+\sum_{C\in \tau\setminus\{C_t\}, \,D\in \rho}
(\alpha_{B_s}\circ_s \beta_C)\alpha_{\widehat{B_s}}(\beta_{C_t}\circ\gamma_D)\beta_{\widehat C,\widehat{C_t}}\gamma_{\widehat D}\\
&& +\sum_{D\in \rho}\big((\alpha_{B_s}\circ_s\beta_{C_t})\circ_t\gamma_D\big)\alpha_{\widehat{B_s}}\beta_{\widehat C_t}\gamma_{\widehat D}.
\end{eqnarray*}
     \noindent Subtracting the result of the first computation from the result of the second one, we therefore get 
\begin{eqnarray*}
 \alpha\rhd_s(\beta\rhd_t\gamma)-  (\alpha\rhd_s\beta)\rhd_t\gamma)
 &=&\sum_{D,D'\in \rho,\, D\neq D'}(\alpha_{B_s}\circ_s\gamma_{D'})(\beta_{C_t}\circ_t\gamma_D)\alpha_{\widehat{B_s}}\beta_{\widehat{C_t}}\gamma_{\widehat D,\widehat{D'}}\\
 &=&\sum_{D,D'\in \rho,\, D\neq D'}(\beta_{C_t}\circ_t\gamma_D)(\alpha_{B_s}\circ_s\gamma_{D'})\beta_{\widehat{C_t}}\alpha_{\widehat{B_s}}\gamma_{\widehat D,\widehat{D'}}\\
 &=&\beta\rhd_t(\alpha\rhd_s\gamma)-  (\beta\rhd_t\alpha)\rhd_s\gamma).
\end{eqnarray*}
    \item Naturality: let $I = S \sqcup T$ be a partition. Fix an element $s \in S$. Consider $\sigma_{1} : S \to S'$ and $\sigma_{1} : T \to T'$ two bijections and let
    $\sigma := (\sigma_{1})|_{S\setminus\{s\}} \bigcup \sigma_{2}$. Let $\pi \vdash S$ and $\tau \vdash T$, and let $B_{s}$ be the block of $\pi$ containing $s$.
    \medbreak
    If $ \alpha \in \tq(\pi)$ and $\beta \in \tq(\tau)$, we have
\begin{eqnarray*}
(\wE\circ\tq)[\sigma](\alpha\rhd_s\beta)
&=&(\wE\circ\tq)[\sigma]\left(\sum_{C\in \tau}(\alpha_{B_s}\circ_s\beta_C)\alpha_{\widehat{B_s}}\beta_{\widehat C}\right)\\
&=&\sum_{C\in \tau}\tq[\sigma](\alpha_{B_s}\circ_s\beta_C)\tq[\sigma](\alpha_{\widehat{B_s}})\tq[\sigma](\beta_{\widehat C})\\
&=&\tq[\sigma_1](\alpha_{B_s})\circ_{\sigma(s)}\tq[\sigma_2](\beta_C)\tq[\sigma_1](\alpha_{\widehat{B_s}})\tq[\sigma_2](\beta_{\widehat C})\\
&=&(\wE\circ\tq)[\sigma_1](\alpha)\rhd_{\sigma(s)}(\wE\circ\tq)[\sigma_2](\beta),
\end{eqnarray*}
       which proves $(N_{1})$. For $(N_{2})$, consider $s_{1}, s_{2} \in S$ and $\sigma : {s_{1}} \to {s_{2}}$ the function given by $\sigma(s_{1}) = s_{2}$. If $u_{s_{1}} \in (\wE \circ \tq)[{s_{1}}] = \tq[{s_{1}}]$ and $u_{s_{2}}\in
   (\wE \circ \tq)[{s_{2}}] = \tq[{s_{2}}]$, we have
   $$(\wE \circ \tq)[\sigma](u_{s_{1}}) = \tq[\sigma](u_{s_{1}}) = u_{s_{2}},$$
   where the second equality is given by the naturality property $(N_{2})$ of $\circ_s$.
   \end{itemize}
\end{proof}


\

\begin{remark}\rm
From the assumptions of the previous theorem, since $(\wE \circ \tq)[\{t\}] = \tq[\{t\}]$, the unitality property (U2) for $\rhd$ come directly from the ones for $\circ$. As for example \ref{NPLforE}, axiom (U1) does not hold.
\end{remark}

\medbreak

\begin{remark}\rm
From the proof of condition (NPL) in Theorem \ref{thm:main}, we remark that the difference 
$\alpha\rhd_s(\beta\rhd_t\gamma)-(\alpha\rhd_s)\beta)\rhd_t\gamma$ vanishes whenever the partition $\rho$ has a single block $D$, i.e. if $\gamma\in \tq[D]$. This reflects the fact that $\wE\circ\tq$ is a right module on the operad $\tq$, the action
\[\chi:(\wE\circ\tq)\circ\tq\to\wE\circ\tq\]
being given by $\mop{id}_{\wE}\circ \gamma_{\tq}$.
\end{remark}

\

\begin{example}\label{NPLforPi}
The operad $\textsf{Com}_+$ endows the species of partitions $\tPi = \wE \circ \wE_+$ with a NPL-operadic structure, via Theorem \eqref{thm:main}. Given $S,T$ nonempty finite sets, $s\in S$, let $\pi \vdash S$ and $\tau \vdash T$. Let $B_s \in \pi$ be the block of $\pi$ containing the element $s$. The NPL-operad on $\tPi$ is given by
\[\rhd_s: \tPi[S] \otimes \tPi[T] \to \tPi[S \sqcup_s T]\]
\[\pi \rhd_s \tau:= \sum_{C \in \tau} \{B_s \circ_s C\}\cup (\pi \setminus B_s)\, (\tau \setminus C),\]
where $\circ_s$ corresponds to the partial composition of $\textsf{Com}_+$.

\end{example}

\medbreak

\begin{example}\label{NPLforArrowPi}
Recall the species of linear set partitions $\overrightarrow{\tPi}=\wE\circ \wL_+$ introduced in \eqref{LinearPartition}. The positive species $\wL_+$ is endowed with the operad $\textsf{As}_+$ (see Example \eqref{OperadAss}), with partial compositions $\{\circ_s\}_s$. Hence, using Theorem \eqref{thm:main}, there is a NPL-operad structure on $\overrightarrow{\tPi}$. Given $S,T$ nonempty finite sets, $s \in S$, let $\pi=\{\ell^\pi_1, \hdots, \ell^\pi_m\} \Vdash S$, $\tau=\{\ell^\tau_1, \hdots, \ell^\tau_n\} \Vdash T$ be linear set partitions of $S$ and $T$, respectively. Let $\ell_s \in \pi$ be the linear order in $\pi$ containing the element $s$. Hence, by Theorem \eqref{thm:main}, the partial compositions
\[\overrightarrow{\tPi}[S] \otimes \overrightarrow{\tPi}[T] \to \overrightarrow{\tPi}[S \sqcup_s T]\]
\[\pi \rhd_s\tau:= \sum_{\ell^\tau \in \tau} \{\ell_s \circ_s \ell^\tau\} \cup (\pi\setminus\{\ell_s\})\cup (\tau \setminus  \{\ell^\tau\}) 
\]
define a NPL-operad structure on $\overrightarrow{\tPi}$.

\end{example}

\

\subsection{The global composition} \label{GlobalComp}
We now want to describe the NPL-operad map
\begin{equation}
\gamma:(\wE \circ \tq) \circ (\wE \circ \tq) \to (\wE \circ \tq)
\end{equation}
from Theorem \eqref{thm:main}, through the iteration of partial compositions, given an operad structure $\gamma_\tq$ on $\tq$. 

\medbreak
Let $I$ be a finite set. We have:
\begin{align*}
(\wE \circ \tq) \circ (\wE \circ \tq) [I] &= \displaystyle \bigoplus_{\pi \vdash I}\, (\wE \circ \tq) [\pi] \otimes (\wE \circ \tq) (\pi) \\
&= \displaystyle \bigoplus_{\pi \vdash I}\, \Biggl(\displaystyle \bigotimes_{\tau \vdash \pi} \tq(\tau) \Biggr) \otimes \Biggl(\displaystyle \bigotimes_{\rho_{B} \vdash B \in \pi} \tq(\rho_{B})\Biggr)\\
&= \bigoplus_{\tau \le \pi \le \rho} \tq[\tau : \pi]\otimes \tq(\rho).
\end{align*}
We show now that the $I$-th component of the map $\gamma$ is of the form
\begin{equation}
\gamma_I: \bigoplus_{\tau \le \pi \le \rho} \tq[\tau : \pi]\otimes \tq(\rho) \longrightarrow  \bigoplus_{H \vdash I} \tq(H).    
\end{equation}
In particular, for three fixed partitions $\tau$, $\pi$, $\rho$ of $I$ such that $\tau \le \pi \le \rho$  the global map $\gamma$ send $\tq[\tau : \pi]\otimes \tq(\rho)$ to $\displaystyle \bigoplus_{H \vdash I} \tq(H)$. We will restrict to partitions $H$ such that $\tau \le H \le \rho$, more precisely to partitions $H_{\mathrm{S}}$ defined as follows.\\
    
Let $g_{\pi} : \rho  \twoheadrightarrow \pi$ be the canonical surjection that sends each block of $\rho$ for the single block of $\pi$ that contains it.
For each section $\mathrm{S}: \pi \to \rho$ of $g_\pi$, we construct the partition $H_{\mathrm{S}}$ of $\rho$ by assembling all the blocks of $\mathrm{S}(\pi)$, which are in the same block $B$ of $\tau$, into a single block $D_{B}^{S}$, leaving the other blocks of $\rho$ unchanged. We denote by $\mathrm{S}^{-1}$ the inverse of the bijection $\mathrm{S}: \pi \to \mathrm{S}(\pi) \subseteq\rho$, and by $\mathrm{S}^{-1}_{B}$ the inverse of $\mathrm{S}_{|B}: \pi_{|B} \to \mathrm{S}(\pi_{|B}) \subseteq \rho_{|B}$. The union  $\displaystyle \bigcup_{T \in \pi}\mathrm{S}(T)$ therefore coincides with $\bigcup_{B\in \tau}D^{\mathrm{S}}_{B}.$

\

\noindent A typical element in $\tq[\tau:\pi] \otimes \tq(\rho)$ , to which we apply $\mu_{\wE \circ \tq}$, is written $\alpha\otimes \beta$ with
$$\displaystyle \alpha=\bigotimes_{B \in \tau} \alpha_{B} \hbox{ and } \beta=\bigotimes_{C \in \rho}\beta_{C},$$ with $\alpha_{B} \in \tq[\pi_{|B}]$ and $\beta_{C}\in \tq[C]$. We have, using Notation \ref{notation-alpha}, the following result.

\medbreak

\begin{proposition}
The global NPL-operadic map $\gamma:(\wE \circ \tq) \circ (\wE \circ \tq) \to (\wE \circ \tq)$ has components
\[\bigoplus_{\tau \le \pi \le \rho} \tq[\tau : \pi]\otimes \tq(\rho) \longrightarrow  \bigoplus_{H \vdash I} \tq(H)\]
\begin{equation}\label{FormulaGamma}
\alpha \otimes \beta \mapsto \sum_{\substack{S: \pi \,\hookrightarrow \, \rho \\ g_{\pi}\circ S \,= \, \text{id}_{\pi} }}\gamma_{\tq}\left(\alpha_B^S\,\otimes \,\beta_{S\left(\pi\srestr{B}\right)}\right)\,\beta_{\widehat{S\left(\pi\srestr{B}\right)}}.
\end{equation}
Here, $\alpha_{B}^{\mathrm{S}}:= \tq[\mathrm{S}_{B}^{-1}](\alpha_{B}) \in \tq[\mathrm{S}(\pi_{\vert B})]$ and $\gamma_{\tq}\left(\alpha_B^S\otimes \beta_{S\left(\pi\srestr{B}\right)}\right)\in \tq[D^{\mathrm{S}}_B].$

\end{proposition}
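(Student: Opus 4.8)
The plan is to prove formula \eqref{FormulaGamma} by computing the global composition $\gamma$ as an iteration of the partial operations $\rhd_s$ defined in \eqref{square-partial}, and recognizing the combinatorial bookkeeping in terms of sections of the canonical surjection $g_\pi:\rho\twoheadrightarrow\pi$. Recall that, by the general theory of operads described via partial compositions (cf. Markl's formulation recalled in Section~\ref{CompAndOperads}), the global map $\gamma$ on a component $\tq[\tau:\pi]\otimes\tq(\rho)$ is obtained by inserting, for each block $C$ of $\rho$, the element $\beta_C$ into the appropriate slot of the ``outer'' structure indexed by $\tau$ and $\pi$; concretely, one lists the blocks of $\rho$ and successively applies $\rhd_c$ for an element $c$ in each block, starting from $\alpha\in\tq[\tau:\pi]=\bigotimes_{B\in\tau}\tq[\pi_{|B}]$, viewed inside $\wE\circ\tq$ after the identification $\pi\cong\hat1_{S}$ at the appropriate level. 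First I would set up this iteration carefully: the element $\alpha=\bigotimes_{B\in\tau}\alpha_B$ lives in $\wE\circ\tq$ over the ground set $\pi$ (its blocks are the elements of $\pi$), each $\alpha_B\in\tq[\pi_{|B}]$ occupying the block $B$ of $\tau$; the element $\beta=\bigotimes_{C\in\rho}\beta_C$ lives over the ground set obtained by partitioning $\pi$ according to $g_\pi$, with $\beta_C\in\tq[C]$.

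Next I would track what one application of $\rhd_c$ does. By \eqref{square-partial}, inserting $\beta_C$ (where $C\in\rho$ sits over the block $T=g_\pi(C)\in\pi$, and $T$ in turn lies in the block $B\in\tau$) into $\alpha_B$ at the ``slot'' $T$ replaces that slot by $\alpha_B\circ_T\beta_C$, while the sum over $C\in\tau$ in \eqref{square-partial} records the choice of which block of the inserted structure receives the insertion. Iterating over all blocks $C$ of $\rho$, the cumulative effect is: for each block $T$ of $\pi$ we must choose one block $C=\mathrm S(T)$ of $\rho$ lying over $T$ into whose slot the composition is performed — this is exactly a section $\mathrm S:\pi\hookrightarrow\rho$ of $g_\pi$ — and then all the remaining blocks $\beta_C$ with $C\notin\mathrm S(\pi)$ are simply concatenated (commutatively) as spectators, i.e. contribute the factor $\beta_{\widehat{\mathrm S(\pi)}}$. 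For a fixed section $\mathrm S$, the blocks $\mathrm S(T)$ with $T\in\pi_{|B}$ all get glued together with $\alpha_B$ through $\gamma_\tq$, producing an element of $\tq[D_B^{\mathrm S}]$, where $D_B^{\mathrm S}=\bigcup_{T\in\pi_{|B}}\mathrm S(T)$; the reindexing $\alpha_B^{\mathrm S}=\tq[\mathrm S_B^{-1}](\alpha_B)$ is forced by naturality, since $\alpha_B$ was indexed by $\pi_{|B}$ but must be fed to $\gamma_\tq$ indexed by $\mathrm S(\pi_{|B})\subseteq\rho_{|B}$. Collecting, the image is $H_{\mathrm S}$-homogeneous with $H_{\mathrm S}=\{D_B^{\mathrm S}:B\in\tau\}\cup(\rho\setminus\mathrm S(\pi))$, and the corresponding component of the output is $\bigotimes_{B\in\tau}\gamma_\tq(\alpha_B^{\mathrm S}\otimes\beta_{\mathrm S(\pi_{|B})})$ times the spectator monomial $\beta_{\widehat{\mathrm S(\pi)}}$ — which is precisely the right-hand side of \eqref{FormulaGamma}. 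The multiassociativity of $\gamma$ (axioms (A1) and the right-module/``nested'' behavior), guaranteed by Theorem~\ref{thm:main}, ensures that the result is independent of the chosen order in which the $\rhd_c$'s were applied, so that the iterated partial composition is well-defined and equals the claimed $\gamma_I$.

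The main obstacle, and the part that demands care rather than routine calculation, is the bookkeeping of the \emph{partial} insertions versus the section $\mathrm S$: a single application of $\rhd_s$ carries the sum $\sum_{C\in\tau}$ over blocks of the \emph{inserted} structure, and when one iterates over the $|\rho|$ blocks of $\rho$ one must check that the resulting (large) sum reorganizes exactly into a sum over sections $\mathrm S:\pi\hookrightarrow\rho$ of $g_\pi$, with no overcounting and with the spectator factors $\beta_{\widehat{\mathrm S(\pi)}}$ appearing with the correct multiplicity one. Equivalently — and this is probably the cleanest way to write it — one shows that the blocks of $\rho$ lying over a fixed block $T\in\pi$ contribute, after all insertions, a sum over which one of them is ``promoted'' to carry the $\gamma_\tq$-composition with $\alpha$ while the others are concatenated; crossing all blocks $T\in\pi$ then produces precisely the product of choices encoded by $\mathrm S$. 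I would organize this as: (1) reduce to the case where $\rho$ refines $\pi$ properly block-by-block, using naturality to localize; (2) prove the one-step formula describing $\alpha\rhd_c\beta_C$ on components; (3) induct on $|\rho\setminus\pi|$, at each step peeling off one block $C$ of $\rho$, applying (2), and matching the emerging sum with the stratification by sections; (4) identify the target block $D_B^{\mathrm S}$ and the reindexed input $\alpha_B^{\mathrm S}$, invoking naturality axiom (N1) for $\gamma_\tq$ to justify the appearance of $\tq[\mathrm S_B^{-1}]$. The independence of the iteration order — needed for step (3) to be coherent — is exactly what (A1) and the nested axioms of Theorem~\ref{thm:main} provide, so no new verification is required there.
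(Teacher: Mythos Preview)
Your overall strategy is right, and it is essentially what the paper intends (the paper does not give a formal proof here; it sets up the section--$H_{\mathrm S}$ notation, states the formula, and illustrates it with a picture). Unfolding $\gamma$ as an iteration of the partial operations \eqref{square-partial} and recognising the resulting sum as indexed by sections $\mathrm S:\pi\hookrightarrow\rho$ of $g_\pi$ is exactly the argument.

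There is, however, a genuine slip in how you set up the iteration. The global map $\gamma_\pi:(\wE\circ\tq)[\pi]\otimes(\wE\circ\tq)(\pi)\to(\wE\circ\tq)[I]$ is obtained by inserting, at each \emph{slot} $T\in\pi$, the element $\beta^{(T)}:=\bigotimes_{C\in\rho_{|T}}\beta_C\in(\wE\circ\tq)[T]$. So the iteration is over the blocks of $\pi$ (the inputs of $\alpha$), not over the blocks of $\rho$ as you write. Your phrase ``one lists the blocks of $\rho$ and successively applies $\rhd_c$ for an element $c$ in each block'' does not parse: after inserting one $\beta_C$ with $C\subset T$, the slot $T$ is gone and there is no place to insert the other blocks $\beta_{C'}$ with $C'\subset T$. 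What actually happens is: a single insertion $\alpha\rhd_T\beta^{(T)}$ already produces, by \eqref{square-partial}, the sum $\sum_{C\in\rho_{|T}}(\alpha_{B_T}\circ_T\beta_C)\,\alpha_{\widehat{B_T}}\,\beta^{(T)}_{\widehat C}$. Iterating over $T\in\pi$ turns the product of these sums into the sum over sections $\mathrm S$, with no further work. Consequently the right induction parameter is $|\pi|$, not $|\rho\setminus\pi|$.

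Two small further points. First, order--independence of the iteration needs only parallel associativity (A1) for $\rhd$ (the slots $T\in\pi$ are distinct inputs of $\alpha$), and inside each block $B\in\tau$ the iterated compositions $\alpha_B\circ_{T_1}\beta_{\mathrm S(T_1)}\circ_{T_2}\cdots$ again only use (A1) for the operad $\tq$; the nested/NPL axiom plays no role here, so your appeal to ``the nested axioms of Theorem~\ref{thm:main}'' is superfluous and potentially misleading. Second, your identification of the reindexing $\alpha_B^{\mathrm S}=\tq[\mathrm S_B^{-1}](\alpha_B)$ via naturality (N1) is correct and is the right justification for why the operadic composition lands in $\tq[D_B^{\mathrm S}]$.
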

\vskip 8mm
\tikzset{every picture/.style={line width=0.75pt}} 

\begin{tikzpicture}[x=0.75pt,y=0.75pt,yscale=-1,xscale=1]

\draw    (350,10) -- (350,493) ;
\draw    (1,251) -- (700,251) ;
\draw [line width=2.25]    (26,24) -- (49,51) ;
\draw [line width=2.25]    (49,51) -- (49,17) ;
\draw [line width=2.25]    (49,51) -- (72,24) ;
\draw [line width=2.25]    (126,24) -- (149,51) ;
\draw [line width=2.25]    (149,51) -- (149,17) ;
\draw [line width=2.25]    (149,51) -- (172,24) ;
\draw [line width=2.25]    (100,51) -- (100,17) ;
\draw [line width=2.25]    (77,125) -- (100,152) ;
\draw [line width=2.25]    (100,152) -- (123,125) ;
\draw    (49,51) -- (77,125) ;
\draw    (99,51) -- (77,125) ;
\draw    (149,51) -- (123,125) ;
\draw [line width=2.25]    (217,24) -- (240,51) ;
\draw [line width=2.25]    (240,51) -- (263,24) ;
\draw [line width=2.25]    (290,51) -- (290,17) ;
\draw [line width=2.25]    (264,125) -- (264,152) ;
\draw    (240,51) -- (264,125) ;
\draw    (290,51) -- (264,125) ;
\draw [line width=2.25]    (377,24) -- (400,51) ;
\draw [line width=2.25]    (400,51) -- (400,17) ;
\draw [line width=2.25]    (400,51) -- (423,24) ;
\draw [line width=2.25]    (477,24) -- (500,51) ;
\draw [line width=2.25]    (500,51) -- (500,17) ;
\draw [line width=2.25]    (500,51) -- (523,24) ;
\draw [line width=2.25]    (450,51) -- (450,17) ;
\draw [line width=2.25]    (424,125) -- (451,152) ;
\draw [line width=2.25]    (451,152) -- (477,125) ;
\draw    (400,51) -- (424,125) ;
\draw  [dash pattern={on 4.5pt off 4.5pt}]  (450,51) -- (424,125) ;
\draw    (500,51) -- (477,125) ;
\draw [line width=2.25]    (568,24) -- (591,51) ;
\draw [line width=2.25]    (591,51) -- (614,24) ;
\draw [line width=2.25]    (641,51) -- (641,17) ;
\draw [line width=2.25]    (615,125) -- (615,152) ;
\draw  [dash pattern={on 4.5pt off 4.5pt}]  (591,51) -- (615,125) ;
\draw    (641,51) -- (615,125) ;
\draw [line width=2.25]    (77,375) -- (100,402) ;
\draw [line width=2.25]    (100,402) -- (123,375) ;
\draw    (49,301.5) -- (77,375) ;
\draw    (99,299.5) -- (77,375) ;
\draw    (149,301.5) -- (123,375) ;
\draw [line width=2.25]    (264,375) -- (264,402) ;
\draw    (240,300.5) -- (264,375) ;
\draw    (290,298.5) -- (264,375) ;
\draw [line width=2.25]    (400.5,160) -- (423.5,186) ;
\draw [line width=2.25]    (423.5,186) -- (423.5,153) ;
\draw [line width=2.25]    (423.5,186) -- (446.5,160) ;
\draw [line width=2.25]    (453.5,160) -- (476.5,186) ;
\draw [line width=2.25]    (476.5,186) -- (476.5,153) ;
\draw [line width=2.25]    (476.5,186) -- (499.5,160) ;
\draw [line width=2.25]    (424,185) -- (451,212) ;
\draw [line width=2.25]    (451,212) -- (477,185) ;
\draw [line width=2.25]    (520,186) -- (520,153) ;
\draw [line width=2.25]    (520,186) -- (520,219) ;
\draw [line width=4]    (520,184) -- (520,188) ;
\draw [line width=2.25]    (580,186) -- (580,219) ;
\draw [line width=2.25]    (624,185) -- (651,212) ;
\draw [line width=2.25]    (651,212) -- (677,185) ;
\draw [line width=2.25]    (426,377) -- (452,402) ;
\draw [line width=2.25]    (452,402) -- (478,377) ;
\draw    (401,300) -- (426,377) ;
\draw  [dash pattern={on 4.5pt off 4.5pt}]  (451,300) -- (426,377) ;
\draw    (501,300) -- (478,377.5) ;
\draw [line width=2.25]    (616,368.5) -- (616,393.5) ;
\draw  [dash pattern={on 4.5pt off 4.5pt}]  (592,300.5) -- (616,368.5) ;
\draw    (642,298.5) -- (616,368.5) ;
\draw [line width=2.25]    (426,440) -- (452,465) ;
\draw [line width=2.25]    (452,465) -- (478,440) ;
\draw [line width=2.25]    (536,440) -- (536,465) ;
\draw (10,30) node [anchor=north west][inner sep=0.75pt]    {$\beta $};
\draw (11,135) node [anchor=north west][inner sep=0.75pt]    {$\alpha $};
\draw (314,17) node [anchor=north west][inner sep=0.75pt]    {$I$};
\draw (309,43) node [anchor=north west][inner sep=0.75pt]    {$\rho $};
\draw (304,114) node [anchor=north west][inner sep=0.75pt]    {$\pi $};
\draw (304,142) node [anchor=north west][inner sep=0.75pt]    {$\tau $};
\draw (361,30) node [anchor=north west][inner sep=0.75pt]    {$\beta $};
\draw (361,135) node [anchor=north west][inner sep=0.75pt]    {$\alpha $};
\draw (665,17) node [anchor=north west][inner sep=0.75pt]    {$I$};
\draw (660,43) node [anchor=north west][inner sep=0.75pt]    {$\rho $};
\draw (655,114) node [anchor=north west][inner sep=0.75pt]    {$\pi $};
\draw (655,142) node [anchor=north west][inner sep=0.75pt]    {$\tau $};
\draw (87,189) node [anchor=north west][inner sep=0.75pt]   [align=left] {An element of $\wE\circ \tq\circ \wE\circ \tq$};
\draw (440,228) node [anchor=north west][inner sep=0.75pt]   [align=left] {Composition with a choice of section};
\draw (10,278.4) node [anchor=north west][inner sep=0.75pt]    {$\beta $};
\draw (11,383.4) node [anchor=north west][inner sep=0.75pt]    {$\alpha $};
\draw (309,290) node [anchor=north west][inner sep=0.75pt]    {$\rho $};
\draw (304,360) node [anchor=north west][inner sep=0.75pt]    {$\pi $};
\draw (304,390) node [anchor=north west][inner sep=0.75pt]    {$\tau $};
\draw (21,439) node [anchor=north west][inner sep=0.75pt]   [align=left] {The same element with a simplified notation};
\draw (44,277) node [anchor=north west][inner sep=0.75pt]   [align=left] {1};
\draw (95,277) node [anchor=north west][inner sep=0.75pt]   [align=left] {2};
\draw (144,277) node [anchor=north west][inner sep=0.75pt]   [align=left] {3};
\draw (236,277) node [anchor=north west][inner sep=0.75pt]   [align=left] {4};
\draw (283,277) node [anchor=north west][inner sep=0.75pt]   [align=left] {5};
\draw (362,278.4) node [anchor=north west][inner sep=0.75pt]    {$\beta $};
\draw (363,383.4) node [anchor=north west][inner sep=0.75pt]    {$\alpha $};
\draw (661,290) node [anchor=north west][inner sep=0.75pt]    {$\rho $};
\draw (656,360) node [anchor=north west][inner sep=0.75pt]    {$\pi $};
\draw (656,390) node [anchor=north west][inner sep=0.75pt]    {$\tau $};
\draw (389,479) node [anchor=north west][inner sep=0.75pt]   [align=left] {Composition with the same choice of section};
\draw (396,277) node [anchor=north west][inner sep=0.75pt]   [align=left] {1};
\draw (447,277) node [anchor=north west][inner sep=0.75pt]   [align=left] {2};
\draw (496,277) node [anchor=north west][inner sep=0.75pt]   [align=left] {3};
\draw (588,277) node [anchor=north west][inner sep=0.75pt]   [align=left] {4};
\draw (635,277) node [anchor=north west][inner sep=0.75pt]   [align=left] {5};
\draw (418,425) node [anchor=north west][inner sep=0.75pt]   [align=left] {1};
\draw (471,425) node [anchor=north west][inner sep=0.75pt]   [align=left] {3};
\draw (530,425) node [anchor=north west][inner sep=0.75pt]   [align=left] {5};
\draw (570,455) node [anchor=north west][inner sep=0.75pt]   [align=left] {2};
\draw (610,455) node [anchor=north west][inner sep=0.75pt]   [align=left] {4};
\end{tikzpicture}

\begin{remark}\rm
The map $\gamma:(\wE \circ \tq) \circ (\wE \circ \tq) \to (\wE \circ \tq)$ can also be obtained from the free commutative twisted algebra on $\tq$ as follows. From an operad $(\tq, \gamma_\tq)$, consider the free left $\tq$-module on $\wE \circ \tq$, the species $\tq \circ \wE \circ \tq$. From here, let $\gamma_0:\tq \circ \wE \circ \tq \to \wE \circ \tq$ be the species map given by the rule \eqref{FormulaGamma} restricted to the elements of $\tq \circ \wE \circ \tq$. By the universal property (Theorem \eqref{PropUnivEq}) on the free twisted algebra on $\tq \circ \wE \circ \tq$, there is a unique map (of commutative twisted algebras) from $\wE \circ (\tq \circ \wE \circ \tq)$ to $\wE \circ \tq$, which is precisely $\gamma$.
\end{remark}

\medbreak

\subsection{NPL-operad structure on $\wE\circ \tq$ from a NPL-operad $\tq$}\label{nplTonpl}

Given species $\tp$ and $\tp_{\text{c}}$ related by the identity $\tp = \wE \circ \tp_\text{c}$, we say that $\tp_{\text{c}}$ is the \emph{species of connected $\tp$-structures}. By definition, $\tp_{\text{c}}[\emptyset]=(0)$.

\medbreak

The next result shows that the partial operation defined in \eqref{square-partial} allows to construct a NPL-operad structure on $\tp=\wE\circ \tp_{\text{c}}$ from a NPL-operad on its connected $\tp$-structure $\tp_{\text{c}}$.

\begin{prop}\label{PropnplTpnpl}
If $\tq$ is a NPL-operad with partial operations $\{\circ_s\}_s$, then the partial operations $\{\vartriangleright_s\}_s$ define a NPL-operad structure on $\wE\circ\tq$.
\end{prop}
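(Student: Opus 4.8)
The plan is to revisit the proof of Theorem \ref{thm:main} and check, line by line, which of its steps actually use the full associativity axiom (A2) of $\tq$ rather than only the parallel-associativity axiom (A1) and the nested pre-Lie axiom (NPL). The key observation is that the formula for $\rhd_s$ in \eqref{square-partial} does not refer to unitality, and that the verification of (A1) and of naturality $(N_1)$, $(N_2)$ for $\rhd_s$ in the proof of Theorem \ref{thm:main} only invoked axiom (A1) and the naturality of $\circ_s$ on $\tq$ — both of which are still available for an NPL-operad. So those parts of the argument carry over verbatim. The only place where the structure of $\tq$ was used more heavily is in the verification of the nested pre-Lie axiom (NPL) for $\rhd_s$, and that is where the work lies.

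First I would recall the two computations from the proof of Theorem \ref{thm:main}: expanding $(\alpha\rhd_s\beta)\rhd_t\gamma$ and $\alpha\rhd_s(\beta\rhd_t\gamma)$, where $s\in S$, $t\in T$, $\pi\vdash S$, $\tau\vdash T$, $\rho\vdash R$, and $B_s\in\pi$, $C_t\in\tau$ are the blocks containing $s$ and $t$. In the expansion of $\alpha\rhd_s(\beta\rhd_t\gamma)$ there is exactly one family of terms in which the inner composition $\beta_{C_t}\circ_t\gamma_D$ and the outer composition meet at the \emph{same} letter — namely the terms $\sum_D\big((\alpha_{B_s}\circ_s\beta_{C_t})\circ_t\gamma_D\big)\,\alpha_{\widehat{B_s}}\,\beta_{\widehat{C_t}}\,\gamma_{\widehat D}$. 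In the original proof this expression was rewritten using axiom (A2) to match a term coming from $(\alpha\rhd_s\beta)\rhd_t\gamma$. Now that (A2) is unavailable, I would instead keep this ``nested'' term as $((\alpha_{B_s}\circ_s\beta_{C_t})\circ_t\gamma_D)$ and symmetrically keep the corresponding nested term from the $\beta\rhd_t(\alpha\rhd_s\gamma)$ side as $((\beta_{C_t}\circ_t\alpha_{B_s})\circ_s\gamma_D)$; then the four-term combination appearing in (NPL) for $\wE\circ\tq$ reduces, block by block, to a sum over $D\in\rho$ of the quantity
\[
\big((\alpha_{B_s}\circ_s\beta_{C_t})\circ_t\gamma_D\big)-\alpha_{B_s}\circ_s(\beta_{C_t}\circ_t\gamma_D)-\Big(\big((\beta_{C_t}\circ_t\alpha_{B_s})\circ_s\gamma_D\big)-\beta_{C_t}\circ_t(\alpha_{B_s}\circ_s\gamma_D)\Big),
\]
each summand multiplied by the same monomial $\alpha_{\widehat{B_s}}\,\beta_{\widehat{C_t}}\,\gamma_{\widehat D}$. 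This is precisely (NPL) for $\tq$ with $(x,y,z)=(\alpha_{B_s},\beta_{C_t},\gamma_D)$ and hence vanishes. The remaining (``separated'') terms — those in which the two compositions land on different letters, including the double sum over $D\neq D'$ in $\rho$ — are handled exactly as in the proof of Theorem \ref{thm:main}: they already match term by term after swapping the order of summation and using the commutativity of the concatenation in $\wE\circ\tq$, with no associativity of $\tq$ needed.

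The main obstacle I anticipate is purely bookkeeping: one must be careful that the ``nested'' terms really do reorganize so that the fixed blocks $B_s$ and $C_t$ are genuinely distinct (which is automatic here since $B_s\subseteq S$, $C_t\subseteq T$ and $S\cap T=\emptyset$), and that in the four expansions the combinatorial indices $\widehat{B_s}$, $\widehat{C_t}$, $\widehat D$, $\widehat{D'}$ refer consistently to complements in $\pi$, $\tau$, $\rho$ respectively — so that applying (NPL) of $\tq$ termwise is legitimate. I would therefore structure the write-up as: (i) note (A1) and naturality for $\rhd_s$ are proved verbatim as in Theorem \ref{thm:main} since they only use (A1) and naturality of $\circ_s$; (ii) redo the two expansions of $(\alpha\rhd_s\beta)\rhd_t\gamma$ and $\alpha\rhd_s(\beta\rhd_t\gamma)$, isolate the nested terms, and show the (NPL)-combination collapses termwise to (NPL) of $\tq$ tensored with a common monomial; (iii) conclude. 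No unitality is claimed, consistent with the NPL-operad definition.
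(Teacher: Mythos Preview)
Your proposal is correct and follows essentially the same approach as the paper's own proof: the paper likewise notes that (A1), $(N_1)$ and $(N_2)$ carry over verbatim from Theorem \ref{thm:main}, then redoes the two expansions of $(\alpha\rhd_s\beta)\rhd_t\gamma$ and $\alpha\rhd_s(\beta\rhd_t\gamma)$, observes that the ``separated'' terms (those over $C\in\tau\setminus\{C_t\}$ and the double sum over $D\neq D'$) cancel or are symmetric exactly as before, and reduces the nested terms to the four-term (NPL) combination of $\tq$ applied to $(\alpha_{B_s},\beta_{C_t},\gamma_D)$ tensored with the common monomial $\alpha_{\widehat{B_s}}\beta_{\widehat{C_t}}\gamma_{\widehat D}$. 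The only cosmetic difference is that the paper phrases the conclusion as ``two-term difference equals two-term difference'' rather than ``four-term combination vanishes'', which is of course equivalent.
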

\begin{proof}
The proof of (A1), (N1) and (N2) are the same as for the case when $(\tq,\gamma_\tq)$ is an operad. It therefore remains to demonstrate (NPL). We have:
\begin{eqnarray*}
\left(\alpha_{B}  \rhd_{s} \beta \right) \rhd_{t} \gamma 
&=& \displaystyle \sum_{C \in \tau}\left(  ( \alpha_{B_{s}} \circ_{s} \beta_{C})\alpha_{\widehat{B_s}}\beta_{\widehat C}\Big) \right) \, \rhd_{t} \gamma\\
&=& \sum_{C\in \tau\setminus\{C_t\}, \,D\in \rho}
(\alpha_{B_s}\circ_s \beta_C)\alpha_{\widehat{B_s}}(\beta_{C_t}\circ\gamma_D)\beta_{\widehat C,\widehat{C_t}}\gamma_{\widehat D}\\
&& +\sum_{D\in \rho}\big((\alpha_{B_s}\circ_s\beta_{C_t})\circ_t\gamma_D\big)\alpha_{\widehat{B_s}}\beta_{\widehat C_t}\gamma_{\widehat D}.
     \end{eqnarray*}
     \noindent On the other hand,
     \begin{eqnarray*}
         \alpha\rhd_s(\beta\rhd_t\gamma)
&=&\sum_{D\in \rho}\alpha\rhd_s\big((\beta_{C_t}\circ_t\gamma_D)\beta_{\widehat{C_t}}\gamma_{\widehat D}\big)\\
&=&\sum_{C\in \tau\setminus\{C_t\}, \,D\in \rho}
(\alpha_{B_s}\circ_s \beta_C)\alpha_{\widehat{B_s}}(\beta_{C_t}\circ\gamma_D)\beta_{\widehat C,\widehat{C_t}}\gamma_{\widehat D}
\\
&&+\sum_{D\in \rho}\big(\alpha_{B_s}\circ_s(\beta_{C_t}\circ_t\gamma_D)\big)\alpha_{\widehat{B_s}}\beta_{\widehat C_t}\gamma_{\widehat D}\\
&& +\sum_{D,D'\in \rho,\, D\neq D'}(\alpha_{B_s}\circ_s\gamma_{D'})(\beta_{C_t}\circ_t\gamma_D)\alpha_{\widehat{B_s}}\beta_{\widehat{C_t}}\gamma_{\widehat D,\widehat{D'}}.
\end{eqnarray*}

\noindent Subtracting the result of the first computation from the result of the second one, we therefore get 
\begin{eqnarray*}
\alpha\rhd_s(\beta\rhd_t\gamma)-\left(\alpha \rhd_{s} \beta \right) \rhd_{t} \gamma
&=&\sum_{D\in \rho}\Big[\big(\alpha_{B_s}\circ_s(\beta_{C_t}\circ_t\gamma_D)\big)-\big((\alpha_{B_s}\circ_s\beta_{C_t})\circ_t\gamma_D\big)\Big]\alpha_{\widehat{B_s}}\beta_{\widehat C_t}\gamma_{\widehat D}\\
&& +\sum_{D,D'\in \rho,\, D\neq D'}(\alpha_{B_s}\circ_s\gamma_{D'})(\beta_{C_t}\circ_t\gamma_D)\alpha_{\widehat{B_s}}\beta_{\widehat{C_t}}\gamma_{\widehat D,\widehat{D'}}\\
&=&\sum_{D\in \rho}\Big[\big(\beta_{C_t}\circ_t(\alpha_{B_s}\circ_s\gamma_D)\big)-\big((\beta_{C_t}\circ_t\alpha_{B_s})\circ_s\gamma_D\big)\Big]\alpha_{\widehat{B_s}}\beta_{\widehat C_t}\gamma_{\widehat D}\\
&& +\sum_{D,D'\in \rho,\, D\neq D'}(\alpha_{B_s}\circ_s\gamma_{D'})(\beta_{C_t}\circ_t\gamma_D)\alpha_{\widehat{B_s}}\beta_{\widehat{C_t}}\gamma_{\widehat D,\widehat{D'}}.
\end{eqnarray*}

Then 
\begin{eqnarray*}
\alpha \rhd_s(\beta \rhd_t\gamma)-\left(\alpha  \rhd_{s} \beta \right) \rhd_{t} \gamma
&=&\beta\rhd_t(\alpha\rhd_s\gamma)-\left(\beta  \rhd_{t} \alpha  \right) \rhd_{s}\gamma,
\end{eqnarray*}

which ends up the proof.
\end{proof}

\

\subsection{A NPL-operad on connected structures}\label{NPLforCandS}
So far, we have explored the structure of NPL-operads on species of the form $\wE \circ \tq$, starting from an operad or a NPL-operad on $\tq$.  This leads to the question whether there exists a NPL-operad on a species of connected structures of certain kind. In the following, we present a non-trivial NPL-operad structure on the \emph{species of cycles}, which corresponds to the connected components of the \emph{species of permutations}.

\

Let $I$ be a finite set. A \emph{cycle} on $I$ is a bijection $c: I \xrightarrow[]{\cong} I$ such that, for every $i \in I$, $\min\{m: m \geq 1,  c^m(i) = i\}=|I|$. Equivalently, the action on $I$ of the subgroup generated by $c$ is free and transitive. If $c$ is a cycle of $I$, we write $c \circlearrowleft I$. Let
\[\tc[I]:=\mathbb{K}\{c: I \to I \, | \, c \circlearrowleft I\}.\]

Any bijection $I \xrightarrow{\sigma}J$ between finite sets defines a map $\tc[\sigma]: \tc[I] \to \tc[J]$ given by $\tc[\sigma](c):= \sigma \circ c \circ \sigma^{-1}$. This defines the species of cycles $\tc$.

\medbreak

If $c \circlearrowleft I$, we write $c = (i_1 \, \,  i_2 \, \, \cdots \, \, i_n)$ if 
\[c: i_1 \mapsto i_2 \mapsto \cdots \mapsto i_n \mapsto i_1,\]
where $\{i_1, i_2, \hdots, i_n\} = I$.

\medbreak

Consider the species of linear orders $\wL$ (see Example \eqref{TwAlgL}). We define two maps $\textsf{lin}: \tc \to \wL$ and $\textsf{cyc}: \wL \to \tc$ given by
\begin{align*}
\textsf{lin}_I: \tc[I] &\to \wL[I] &&,& \textsf{cyc}_I: \wL[I] &\to \tc[I] \\
c &\mapsto \sum_{i \in I}i|c(i)|\cdots|c^{n-1}(i) &&& i_{j_1}|i_{j_2}| \cdots | i_{j_n} &\mapsto (i_{j_1} \, \, i_{j_2} \, \, \cdots \, \, i_{j_n}),
\end{align*}
for all $I=\{i_1, \hdots, i_n\}$.

\medbreak 

If $c \in \tc[I]$, we have $\textsf{cyc}_I(\,i|c(i)|\cdots | c^{n-1}(i)\,)= c$, for every $i \in I$. Therefore, $\textsf{cyc}_I\circ \textsf{lin}_I (c) = |I|\, c$, for all $c \in \tc[I]$.

\

We now describe a NPL-operad structure on $\tc$. Consider the operad $\textsf{As}_+$ on $\wL_+$ (see Example \eqref{OperadAss}), with partial operations $\{\circ_s\}_s$. Given nonempty finite sets $S,T$ and a fixed element $s \in S$, we define a partial composition
\[\rhd_s: \tc[S] \otimes \tc[T] \to \tc[S \sqcup_s T]\]
\begin{equation}
c \rhd_s d:= \textsf{cyc}_{S \sqcup_s T}{\big(}\, s|c(s)|\cdots|c^{|S|-1}(s) \, \circ_s \, \textsf{lin}_T(d) \, {\big)}
\end{equation}
More precisely,
\[c \rhd_s d = \sum_{t \in T} {\big(} \, t \, \, d(t) \, \,\cdots \, \, d^{|T|-1}(t)\,\,c(s) \, \, \cdots \, \, c^{|S|-1}(s)\, {\big)}\]

\begin{proposition}
The species of cycles $\tc$ is a NPL-operad.
\end{proposition}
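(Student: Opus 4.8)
The plan is to deduce each NPL-operad axiom for $\tc$ from the corresponding property of the honest operad $\textsf{As}_+$ on $\wL_+$, transported along $\textsf{lin}$ and $\textsf{cyc}$. Two elementary facts will carry the bookkeeping. The first is the identity $\textsf{cyc}_I\circ\textsf{lin}_I=|I|\cdot\text{id}$ recorded above. The second is a \emph{cut-independence} property: for a cycle $c\circlearrowleft S$, an element $s\in S$, and a linear order $\ell$ on a set $T$ disjoint from $S$, the cycle $\textsf{cyc}_{S\sqcup_s T}(\lambda_i(c)\circ_s\ell)$ is the same for every $i\in S$, where $\lambda_i(c):=i|c(i)|\cdots|c^{|S|-1}(i)$; indeed $\lambda_i(c)\circ_s\ell$ and $\lambda_j(c)\circ_s\ell$ are cyclic rotations of one another, both encoding ``the cyclic word of $c$ with the letter $s$ replaced by the word $\ell$''. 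Granting this, I can evaluate $c\,\rhd_s(-)$ on a cycle by cutting $c$ at whichever point is most convenient, keeping in mind that $c\,\rhd_s d=\textsf{cyc}\big(\lambda_s(c)\circ_s\textsf{lin}_T(d)\big)$.

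Naturality $(N1)$, $(N2)$ will be immediate, since $\rhd_s$ is assembled from the natural maps $\textsf{lin}$, $\textsf{cyc}$ and from $\circ_s$ of $\textsf{As}_+$ (which satisfies $(N1)$, $(N2)$), and $\tc[\{s\}]$ is one-dimensional. For parallel associativity $(A1)$, with distinct $s,s'\in S$, I evaluate the iterated composition on single linear orders $\ell_d\in\textsf{lin}_T(d)$, $\ell_e\in\textsf{lin}_R(e)$ by cutting $c$ at $s$; cut-independence turns $(c\,\rhd_s\ell_d)\,\rhd_{s'}\ell_e$ into $\textsf{cyc}\big((\lambda_s(c)\circ_s\ell_d)\circ_{s'}\ell_e\big)$, and since $s,s'$ lie at distinct positions of $\lambda_s(c)$ the two insertions commute by $(A1)$ of $\textsf{As}_+$; applying $\textsf{cyc}$ and summing over $\ell_d,\ell_e$ gives $(c\,\rhd_s d)\,\rhd_{s'}e=(c\,\rhd_{s'}e)\,\rhd_s d$.

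The core of the proof is $(NPL)$. Fix $s\in S$, $t\in T$, cycles $c\circlearrowleft S$, $d\circlearrowleft T$, $e\circlearrowleft R$, and write $\bar c:=c(s)|\cdots|c^{|S|-1}(s)$, $\bar d:=d(t)|\cdots|d^{|T|-1}(t)$, and $e_{r'}:=r'|e(r')|\cdots|e^{|R|-1}(r')$ for $r'\in R$. Using cut-independence repeatedly one expands
\[(c\,\rhd_s d)\,\rhd_t e=\sum_{t'\in T,\ r'\in R}\textsf{cyc}\big((\lambda_{t'}(d)\circ_t e_{r'})\,|\,\bar c\big),\qquad
c\,\rhd_s(d\,\rhd_t e)=\sum_{r'\in R,\ x\in T\sqcup_t R}\textsf{cyc}\big(\lambda_s(c)\circ_s\lambda_x(e_{r'}|\bar d)\big),\]
where $\lambda_x(e_{r'}|\bar d)$ is the cyclic rotation of the word $e_{r'}|\bar d$ beginning at $x$. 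A matching of cut points then shows that the terms of the second sum with $x\in(T\setminus\{t\})\cup\{r'\}$ reproduce the whole of the first sum (this is where the ordinary associativity of $\wL$-insertion is used), so the difference collapses onto the terms with $x\in R\setminus\{r'\}$. Writing $x=e^i(r')$ and $r'':=e^i(r')$, these terms rearrange into
\[c\,\rhd_s(d\,\rhd_t e)-(c\,\rhd_s d)\,\rhd_t e=\sum_{(A,B)}\textsf{cyc}\big(A\,|\,\bar d\,|\,B\,|\,\bar c\big),\]
the sum running over ordered pairs $(A,B)$ of nonempty ``arcs'' with $A|B$ one of the $|R|$ linear words representing the cyclic word $e$. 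Running the same computation with $(c,s)$ and $(d,t)$ exchanged gives $\sum_{(A,B)}\textsf{cyc}(A\,|\,\bar c\,|\,B\,|\,\bar d)$; reindexing $(A,B)\mapsto(B,A)$ and using the rotation identity $\textsf{cyc}(B\,|\,\bar d\,|\,A\,|\,\bar c)=\textsf{cyc}(A\,|\,\bar c\,|\,B\,|\,\bar d)$ identifies the two remainders, which is precisely $(NPL)$.

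This is the cycle analogue of the mechanism in the proof of Theorem \ref{thm:main}, where the obstruction to nested associativity is the manifestly $(\alpha,s)\leftrightarrow(\beta,t)$-symmetric term $\sum_{D\ne D'}(\alpha_{B_s}\circ_s\gamma_{D'})(\beta_{C_t}\circ_t\gamma_D)\,\alpha_{\widehat{B_s}}\beta_{\widehat{C_t}}\gamma_{\widehat D,\widehat{D'}}$: the role of ``two distinct blocks $D,D'$ of $\rho$'' is now played by ``two distinct cuts of the cycle $e$''. The one genuinely delicate point is this last bookkeeping — matching the iterated-composition terms correctly and recognizing the surviving sum as symmetric under $(c,s)\leftrightarrow(d,t)$; the remaining verifications are routine naturality and word combinatorics.
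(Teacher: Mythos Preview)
Your proof is correct and follows essentially the same route as the paper: both arguments compute $(c\rhd_s d)\rhd_t e$ and $c\rhd_s(d\rhd_t e)$ explicitly, subtract, and recognize the remainder as a sum over ordered pairs of nonempty complementary arcs of the cycle $e$, which is manifestly symmetric under $(c,s)\leftrightarrow(d,t)$ after the reindexing $(A,B)\mapsto(B,A)$ and a cyclic rotation. The organizational difference is that you isolate the ``cut-independence'' observation as a lemma and then systematically reduce every computation to the $\textsf{As}_+$ partial compositions on $\wL_+$, whereas the paper carries out the same manipulations by direct element-chasing with explicit cyclic words; your packaging makes the role of $\textsf{As}_+$ (and hence the analogy with Theorem~\ref{thm:main}) more transparent, while the paper's version is more self-contained.
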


\begin{proof}


Let $I= S \sqcup T \sqcup R$ and fix distinct elements $s,s' \in S$ and $t \in T$. Consider cycles $c \in \tc[S]$, $d \in \tc[T]$ and $e \in \tc[R]$.

\begin{itemize}
\item Horizontal associativity (A1).
\begin{align*}
(c \rhd_s d)\rhd_{s'}e&= \sum_{t \in T}  {\big(} \, t \, \, d(t) \, \,\cdots \, \, d^{|T|-1}(t)\,\,c(s) \, \, \cdots \, \, c^{|S|-1}(s)\, {\big)} \rhd_{s'} e\\
&= \sum_{t \in T}  {\big(} \, t \, \, d(t) \, \,\cdots \, \, d^{|T|-1}(t)\,\,c(s) \, \, \cdots \, \, s' \, \, \cdots \, \, c^{|S|-1}(s)\, {\big)} \rhd_{s'} e\\
&= \sum_{t \in T}\sum_{r \in R}  {\big(} \, t \, \, d(t) \, \,\cdots \, \, d^{|T|-1}(t)\,\,c(s) \, \, \cdots \, \, r \, \, e(r) \, \, \cdots \, \, e^{|R|-1}(r) \, \, \cdots \, \, c^{|S|-1}(s)\, {\big)}\\
&= \sum_{r \in R}  {\big(} \, s\,\,c(s) \, \, \cdots \, \, r \, \, e(r) \, \, \cdots \, \, e^{|R|-1}(r) \, \, \cdots \, \, c^{|S|-1}(s)\, {\big)} \rhd_s d\\
&= (c \rhd_{s'} e)\rhd_s d.
\end{align*}
\medbreak
\item Nested Pre-Lie axiom (NPL). 

We calculate $(c \rhd_s d) \rhd_t e$ and $c \rhd_s(d \rhd_t e)$ separately: 
\begin{align}
c \rhd_s (d \rhd_t e)  &=  \sum_{r \in R} c \rhd_s  {\big(} \, r \, \, e(r) \, \,\cdots \, \, e^{|R|-1}(r)\,\,d(t) \, \, \cdots \, \, d^{|T|-1}(t)\, {\big)} \nonumber
\end{align}

\[= \sum_{r \in R} \textsf{cyc}_{S \sqcup_s T\sqcup_t R}\left(\textsf{lin}_{T\sqcup_t R}\left( \, r \, \, e(r) \, \,\cdots \, \, e^{|R|-1}(r)\,\,d(t) \, \, \cdots \, \, d^{|T|-1}(t)\, \right)|c(s)|\cdots |c^{|S|-1}(s) \right) \]

\begin{align}
= \sum_{r \in R} \textsf{cyc}_{S \sqcup_s T\sqcup_t R}\left( \, c(s)|\cdots |c^{|S|-1}(s)|\,\textsf{lin}_{T\sqcup_t R}\left( \, r \, \, e(r) \, \,\cdots \, \, e^{|R|-1}(r)\,\,d(t) \, \, \cdots \, \, d^{|T|-1}(t)\, \right) \right). \label{equ1}
\end{align}

\

Also,
\begin{align}
(c \rhd_s d) \rhd_t e  &= \sum_{i \in T}  {\big(} \, i \, \, d(i) \, \,\cdots \, \, d^{|T|-1}(i)\,\,c(s) \, \, \cdots \, \, c^{|S|-1}(s)\, {\big)} \rhd_{t} e \nonumber\\
&= \sum_{i \in T}  {\big(} \, i \, \, d(i) \, \,\cdots \, \, d^{p_i-1}(i) \, \, t \, \, d^{p_i+1}(i) \, \, \cdots \, \, d^{|T|-1}(i)\,\,c(s) \, \, \cdots \, \, c^{|S|-1}(s)\, {\big)} \rhd_{t} e \nonumber\\
&= \sum_{i \in T}  {\big(} \, t \, \, d^{p_i+1}(i) \, \,\cdots \, \, d^{|T|-1}(i)\,\,c(s) \, \, \cdots \, \, c^{|S|-1}(s)\, \, i \, \, d(i) \, \, \cdots \, \, d^{p_i -1}(i) \,{\big)} \rhd_{t} e \nonumber\\
&= \sum_{i \in T} \sum_{r \in R} {\big(} \, r \, \, e(r) \, \, \cdots \, \, e^{|R|-1}(r)  \, \, d^{p_i+1}(i) \, \,\cdots \, \, d^{|T|-1}(i)\,\,c(s) \, \, \cdots \, \, c^{|S|-1}(s)\, \, i \, \, d(i) \, \, \cdots \, \, d^{p_i -1}(i) \,{\big)} \nonumber\\
&=\sum_{i \in T}\textsf{cyc}_{S \sqcup_s T \sqcup_t R} \left(c(s)| \cdots | c^{|S|-1}(s) |\,i\,|d(i)|\cdots | d^{p_i-1}(i)| \, \textsf{lin}_R(e) \,| d^{p_i+1}(i)|\cdots|d^{|T|-1}(i)\,\right), \label{equ2}
\end{align}

where, for every $i \in I$, the integer $0 \leq p_i\leq |T|-1$ is such that $d^{p_i}(i)=t$. Subtracting \eqref{equ1} with \eqref{equ2}, we obtain the following expression for $c \rhd_s (d \rhd_t e) - (c \rhd_s d) \rhd_t e$:
\[ \sum_{r \in R}\sum_{m=0}^{|R|-1} \textsf{cyc}_{S \sqcup_s T \sqcup_t R} \left(c(s)| \cdots | c^{|S|-1}(s) | \, r \, | e(r)| \cdots |e^m(r)| d(t)|\cdots | d^{|T|-1}(t)| e^{m+1}(r)| \cdots | e^{|R|-1}(r) \right).\]

Notice that the expression above is symmetric on $c$ and $d$. Therefore,
\[c \rhd_s (d \rhd_t e) - (c \rhd_s d) \rhd_t e = d \rhd_t (c \rhd_s e) - (d \rhd_t c) \rhd_s e.\]
\end{itemize}
\end{proof}

\

Consider now the species $\mathfrak{S}:= \wE \circ \tc_+$. This is the \emph{species of permutations}. If $I$ is a finite set, then
\[\mathfrak{S}[I]=\mathbb{K}\{f: I \xrightarrow{\cong} I\},\]
where the transport of structure is defined as for $\tc$. If $f: I \xrightarrow{\cong} I$ is a permutation of $I$, then there exists a unique partition $\pi = \{B_1, \hdots, B_k\}\vdash I$ such that $f$ factorizes as a commutative product of cycles $f=c_1\cdots c_k$, with $c_i \in \tc[B_i]$. Given a block $B \in \pi$ and a cycle $c \in \tc[B]$ of $f$, we write $f_{\widehat{c}} \in \mathfrak{S}[I\setminus B]$ for the permutation of $I \setminus B$ obtained by removing the cycle $c$ from the factorization of $f$.

\medbreak

Using Proposition \eqref{PropnplTpnpl}, the species $\mathfrak{S}$ possesses a NPL-operad structure, inherited from the NPL-operad structure $\{\rhd_s\}_s$ of $\tc$. Let $S,T$ be nonempty finite sets, with $s \in S$, and $f \in \mathfrak{S}[S]$, $g \in \mathfrak{S}[T]$. Let $c_s$ be the cycle of $f$ containing the element $s$. By Proposition \eqref{PropnplTpnpl}, the partial composition $\sq_s$ given by
\[\mathfrak{S}[S] \otimes \mathfrak{S}[T] \to \mathfrak{S}[S \sqcup_sT]\]
\begin{equation}
f \,\sq_s \,g:= \sum_{d \text{ cycle of } g}(c_s \rhd_sd)\,f_{\widehat{c_s}}\,g_{\widehat{d}}
\end{equation}
defines a NPL-operad structure on $\mathfrak{S}$.

\
\section{Algebras over a NPL-operad }\label{sect:pol}
Let $V$ be a finite-dimensional vector space (over a field $\mathbb{K}$). For any finite set $A$, note
\begin{equation}\label{calE}
\mathcal{E}(V)[A]:= \mathcal{P}ol (V^{\otimes A}, V)    
\end{equation}

as the space of polynomial functions on $V^{\otimes A}$ with values in $V$.
A function $f: V^{\otimes A} \to V \in \mathcal{E}(V)$[A] can be written as $f=\displaystyle \sum_{i=1}^d f_i e_i$, where $(e_1, \ldots, e_d)$ is a basis of $V$ and $f_i= e_i^* \circ f$, where $(e_1^*, \ldots, e_d^*)$ is the dual basis. We can identify $\mathcal{E}(V)[\{*\}]:= \mathcal{P}ol (V, V)$ with the set of vector fields on $V$, by identifying $V$ with its tangent space at each point, and thus identifying a basis vector $e_i$ with the constant vector field $\partial_i=\frac{\partial}{\partial x_i}$. Any $f\in \mathcal{E}(V)[\{*\}]$ is therefore written as
\[f=\sum_{i=1}^d f_i\partial_i.\]\\

\noindent Let us recall  that $\big(\mathcal{P}ol (V, V), \vartriangleright\!\!\big)$  is pre-Lie algebra, where  $\vartriangleright$ is given by:
\begin{equation}\label{NPLpoly}
\left(\displaystyle \sum_{i=1}^d f_i \partial_i\right)\vartriangleright \left(\displaystyle \sum_{j=1}^d g_j \partial_j\right) =  \displaystyle \sum_{j=1}^d \left(\displaystyle \sum_{i=1}^d f_i (\partial_i g_j)\right) \partial_j,
\end{equation}
for $f=\displaystyle \sum_{i=1}^d f_i e_i \in \mathcal{P}ol (V, V)$ and $g=\displaystyle \sum_{j=1}^d g_j e_j \in \mathcal{P}ol (V, V)$.

\begin{definition}\label{partialNPL}
    Let $B$ and $C$ be two finite sets and $b \in B$. We define a partial composition \[\rhd_{b}:\mathcal{E}(V)[B]\otimes \mathcal{E}(V)[C]\longrightarrow \mathcal{E}(V)[B \sqcup_{b}C]\] 
 as
 \[g \rhd_b f (vw):= \frac 1{|C|}\displaystyle \sum_{k \in C} \left(g_v  \vartriangleright f_{w_{\widehat{k}}}\right)(w_k),\]
 where $w=(w_{k^{'}})_{k^{'}\in C} \in V^{\otimes C }$ and $w_{\widehat{k}}=(w_{k^{'}})_{k^{'}\in C\setminus{\{k\}}}$, and where $g_v \in \mathcal{E}(V)[\{b\}]$ (resp. $f_{w_{\widehat{k}}}\in \mathcal E(V)[\{*\}]$) is defined by $g_v(x)=g(vx)$ for any $v=(v_j)_{j\in B\setminus\{b\}}\in V^{\otimes B\setminus\{b\}}$ (resp. by $f_{w_{\widehat{k}}}(x)=f(w_{\widehat{k}}x)$). 
\end{definition}
\begin{theorem}
   $ \big(\mathcal{E}(V), \rhd\big)$ is an NPL-operad.
\end{theorem}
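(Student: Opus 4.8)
The plan is to check directly that the partial compositions $\rhd_{b}$ of Definition~\ref{partialNPL} satisfy the axioms retained by an NPL-operad --- naturality (N1)--(N2), horizontal associativity (A1), and the nested pre-Lie axiom (NPL); there is no unit condition to verify. I would first record that $\mathcal{E}(V)$ is a (contravariant) species: a bijection $\sigma\colon A\to A'$ induces the relabelling isomorphism $V^{\otimes\sigma}\colon V^{\otimes A}\to V^{\otimes A'}$, hence an isomorphism $\mathcal{E}(V)[\sigma]\colon\mathcal{E}(V)[A]\to\mathcal{E}(V)[A']$ by $f\mapsto f\circ(V^{\otimes\sigma})^{-1}$, functorially in $\sigma$. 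The defining formula for $\rhd_{b}$ only refers to evaluation, the partial-evaluation maps $g\mapsto g_{v}$ and $f\mapsto f_{w_{\widehat k}}$, the flat partial derivatives $\partial_{i}$, and the pre-Lie product $\vartriangleright$ of \eqref{NPLpoly} on $\mathcal{P}ol(V,V)$, all of which are equivariant under relabelling of tensor factors; so (N1) and (N2) reduce to routine bookkeeping, to be dispatched first, exactly as in the naturality part of the proof of Theorem~\ref{thm:main}.

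For (A1) one composes $g\in\mathcal{E}(V)[B]$ at two distinct elements $s,s'\in B$, which index two independent tensor slots of $g$. Unwinding $(g\rhd_{s}d)\rhd_{s'}e$ and $(g\rhd_{s'}e)\rhd_{s}d$ should express both as the same sum, over the slots of $C$ and of $R$, of a partial evaluation of $g$ (its $s$-slot fed from the $d$-data and its $s'$-slot from the $e$-data) carrying one derivative in the $s$-direction paired with $d$ and one in the $s'$-direction paired with $e$; since $d$ and $e$ are not differentiated at this stage, the two derivatives of $g$ combine into a mixed second partial, and their commutation (Schwarz's theorem for polynomial maps) yields (A1). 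This is the analogue, for $\mathcal{E}(V)$, of the parallel-associativity computation in Theorem~\ref{thm:main}.

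The substantive step is (NPL), and I would model it on the proof of Proposition~\ref{PropnplTpnpl}. With $t\in C$, expanding $(g\rhd_{s}d)\rhd_{t}e$ differentiates the components of $g\rhd_{s}d$ once in the $t$-direction; as $g\rhd_{s}d$ is, up to the $1/|C|$ normalization, a sum of products of components of $d$ with first $s$-derivatives of components of $g$ (evaluated with the $s$-slot read off the $C$-data), the Leibniz rule splits this into a term where the derivative hits $d$ and a term where it hits $g$ again, the latter carrying a second $s$-derivative of a component of $g$. Expanding $g\rhd_{s}(d\rhd_{t}e)$ instead, the pre-Lie product of \eqref{NPLpoly} together with Leibniz produces the matching ``derivative hits $d$'' family, an analogous second-$s$-derivative-of-$g$ family, and a family where $g$ is paired with the remaining copies of $e$. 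Subtracting, the ``derivative hits $d$'' families cancel, the second-derivative-of-$g$ contributions assemble into the associator of the pre-Lie algebra $\mathcal{P}ol(V,V)$ --- which \eqref{NPLpoly} makes symmetric in its first two arguments --- and the last family is already symmetric under exchanging $(g,s)$ with $(d,t)$; hence $g\rhd_{s}(d\rhd_{t}e)-(g\rhd_{s}d)\rhd_{t}e=d\rhd_{t}(g\rhd_{s}e)-(d\rhd_{t}g)\rhd_{s}e$, which is (NPL). I expect the main obstacle to be exactly the bookkeeping here: keeping straight which tensor slot is fed which argument through the iterated partial compositions, and verifying that the normalizations $1/|C|$, $1/|R|$, $1/|C\sqcup_{t}R|$ conspire so that the unwanted terms cancel and the residue is manifestly symmetric in $(g,s)\leftrightarrow(d,t)$; once this is in place, nothing beyond Schwarz's theorem, the Leibniz rule, and the identity \eqref{NPLpoly} is needed.
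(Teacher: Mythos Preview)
Your overall strategy---reduce (A1) and (NPL) for $\rhd$ to properties of the pre-Lie product $\vartriangleright$ on $\mathcal{P}ol(V,V)$---is the paper's approach as well, and your warning that the normalizing factors must be tracked carefully is apt. But you have the direction of differentiation in $\rhd_b$ reversed, and this makes both your (A1) and (NPL) sketches mis-aimed.

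In the definition, $g_v\vartriangleright f_{w_{\widehat k}}$ uses $g_v$ as the vector field and differentiates $f_{w_{\widehat k}}$; see \eqref{NPLpoly}. So in $(h\rhd_a g)\rhd_{a'}f$ (the paper's notation for your $(g\rhd_s d)\rhd_{s'}e$), the outermost element $h$ is \emph{never} differentiated: it only contributes the evaluated coefficients $h_u^r(v_jw_k)$, while $g$ and $f$ each receive exactly one derivative. The paper's verification of (A1) then amounts to observing that the resulting expression
\[
\frac{1}{|B||C|}\sum_{j,k}\sum_{p,q,r}h_u^r(v_jw_k)\,(\partial_r g_{v_{\widehat j}}^{\,p})(v_j)\,(\partial_p f_{w_{\widehat k}}^{\,q})(w_k)\,\partial_q
\]
is symmetric under interchanging the roles of $g$ and $f$; no mixed second partial of $h$ and no Schwarz lemma enter.

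For (NPL) the paper's computation shows, after collapsing the sums over the slots of $B\sqcup_b C$ and over the slots of $B$, that
\[
h\rhd_a(g\rhd_b f)(uvw)=\frac{1}{|C|}\sum_{k\in C}\bigl(h_u\vartriangleright(g_v\vartriangleright f_{w_{\widehat k}})\bigr)(w_k),
\qquad
(h\rhd_a g)\rhd_b f(uvw)=\frac{1}{|C|}\sum_{k\in C}\bigl((h_u\vartriangleright g_v)\vartriangleright f_{w_{\widehat k}}\bigr)(w_k),
\]
so the NPL-defect is exactly $\frac{1}{|C|}\sum_k$ of the pre-Lie associator of $h_u,g_v,f_{w_{\widehat k}}$ in $\mathcal{P}ol(V,V)$, symmetric in $h_u\leftrightarrow g_v$ by \eqref{NPLpoly}. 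Your outline instead hunts for second derivatives of the outer element and a residual ``already symmetric'' family; with the correct direction of $\vartriangleright$ those phenomena do not occur, and the argument is shorter than you anticipate. The genuine work is in justifying the two displayed identities above (this is where the factors $\tfrac{1}{|B|+|C|-1}$ and $\tfrac{1}{|B|}$ get absorbed), after which (NPL) is immediate.
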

\begin{proof}
\phantom{a}
\begin{itemize}
    \item Parallel associativity:  let $A$, $B$, and $C$ be three finite sets and let $h\in \mathcal{E}(V)[A]$,  $g \in \mathcal{E}(V)[B]$ and $f \in \mathcal{E}(V)[C]$. Let $a, a' \in A$, let $u \in V^{\otimes A \setminus{\{a, a'\}}}$, let $v \in V^{\otimes B }$, and let $w\in V^{\otimes C }$.
    \begin{eqnarray*}
        (h \rhd_a g) \rhd_{a'} f(uvw)&=&\frac{1}{|C|} \displaystyle \sum_{k \in C}\left((h_u \rhd_a  g)_{uv} \vartriangleright f_{w_{\widehat{k}}}\right)(w_k)\\
        &=& \frac{1}{|C|}\displaystyle \sum_{k \in C} \Bigg( \big( x \mapsto h \rhd_a g (xuv)\big)\vartriangleright f_{w_{\widehat{k}}}\Bigg)(w_k)\\
        &=& \frac{1}{|B||C|}\displaystyle \sum_{k \in C, \,j \in B } \Bigg( \left( x \mapsto h_{xu} \vartriangleright g_{v_{\widehat{j}}} (v_j)\right)\vartriangleright f_{w_{\widehat{k}}}\Bigg)(w_k).\\
        & & \qquad \qquad \qquad 
    \end{eqnarray*}
    From
    \begin{eqnarray*}
        h_{xu} \vartriangleright g_{v_{\widehat{j}}} (v_j)&=& \displaystyle \sum_{p}\displaystyle \sum_{r} h_{xu}^r \partial_r g_{v_{\widehat{j}}}^p \partial_p (v_j)\\
        &=& \displaystyle \sum_{p}\underbrace{\displaystyle \sum_{r} h_{xu}^r(v_j) \left( \partial_r g_{v_{\widehat{j}}}^p \right) (v_j)}_{H_p}\partial_p \\
        &:=& \displaystyle \sum_{p} H^p \partial_p.
    \end{eqnarray*}
    we get
    \begin{eqnarray*}
        (h \rhd_a g) \rhd_{a'} f(uvw)&=&\frac{1}{|B||C|}\displaystyle \sum_{k \in C, \,j \in B } \Bigg(\displaystyle \sum_{p} H^p \partial_p \Bigg) \vartriangleright \Bigg(\displaystyle \sum_{q} f_{w_{\widehat{k}}}^q \partial_q \Bigg) (w_k)\\
        &=& \frac{1}{|B||C|}\displaystyle \sum_{k \in C, \,j \in B } \Bigg(\displaystyle \sum_{q}\displaystyle \sum_{p} H^p \left(\partial_p f_{w_{\widehat{k}}}^q \right)\partial_q \Bigg) (w_k)\\
        &=& \frac{1}{|B||C|}\displaystyle \sum_{k \in C, \,j \in B }\displaystyle \sum_{q}\displaystyle \sum_{p} \displaystyle \sum_{r} h_{u}^r(v_j w_k) \left( \partial_r g_{v_{\widehat{j}}}^p \right) (v_j)\left(\partial_p f_{w_{\widehat{k}}}^q \right)(w_k)\partial_q.
    \end{eqnarray*}
    Here it is clear that $g_{v_{\widehat{j}}}$ and $f_{w_{\widehat{k}}}$ play a symmetric role, which implies 
    \[(h \rhd_a g) \rhd_{a'} f(uvw)=(h \rhd_{a'} f) \rhd_{a} g(uwv),\]
    hence (A1) is proved.\\
    
    \item  The nested pre-Lie axiom (NPL) is proven as follows: let $A$, $B$, and $C$ be three finite sets and let $h\in \mathcal{E}(V)[A]$,  $g \in \mathcal{E}(V)[B]$ and $f \in \mathcal{E}(V)[C]$. Let $u \in V^{\otimes A \setminus{\{a\}}}$, $v \in V^{\otimes B \setminus{\{b\}}}$ and $w\in V^{\otimes C }$. We can compute
    \begin{align*}
        h \rhd_a(g \rhd_b f)(uvw)&=\frac{1}{|B|+|C|-1} \left(\displaystyle \sum_{j \in B \setminus{\{b\}}}\left(h_u \vartriangleright (g \rhd_b f)_{v_{\widehat{j}}w}\right)(v_j)+\displaystyle \sum_{k \in C} \left(h_u \vartriangleright (g \rhd_b f)_{vw_{\widehat{k}}}\right)(w_k)\right)\\
        &= \frac{1}{|B|+|C|-1}\Bigg\{\displaystyle \sum_{j \in B \setminus{\{b\}}}\Big(h_u \vartriangleright \left(x \mapsto g \rhd_b f\right)(xv_{\widehat{j}}w)\Big)(v_j)\\
        &\phantom{=}+\displaystyle \sum_{k \in C}\Big(h_u \vartriangleright \left(x \mapsto g \rhd_b f\right)(vw_{\widehat{k}}x)\Big)(w_k)\Bigg\}\\
        &= \frac{1}{(|B|+|C|-1)|C|}\Bigg\{\displaystyle \sum_{j \in B \setminus{\{b\}},k\in C}\bigg(h_u \vartriangleright \left(x \mapsto g_{xv_{\widehat{j}}} \vartriangleright f_{w_{\widehat{k}}}\right)(w_k)\bigg)(v_j)\\
        & \phantom{=}+\displaystyle \sum_{k, k^{'} \in C, k \ne k^{'}}\bigg(h_u \vartriangleright \Big(x \mapsto g_v \rhd f_{w_{\widehat{k},\widehat{k^{'}} }x}\Big)(w_{k^{'}})\bigg)(w_k) \\
        & \phantom{=} + \displaystyle \sum_{k \in C}\bigg(h_u \vartriangleright \Big(x \mapsto g_v \vartriangleright f_{w_{\widehat{k}}}\Big)(x)\bigg)(w_k)\Bigg\} \\
        &=\frac{1}{|C|}\displaystyle \sum_{k \in C}\bigg(h_u \vartriangleright \left(g_v \vartriangleright f_{w_{\widehat{k}}}\right)\bigg)(w_k).
    \end{align*}
    \noindent On the other hand,
    \begin{eqnarray*}
        (h \rhd_a g) \rhd_b f(uvw)&=& \frac{1}{|C|}\displaystyle \sum_{k \in C}\left((h \rhd_a  g)_{uv} \vartriangleright f_{w_{\widehat{k}}}\right)(w_k)\\
        &=& \frac{1}{|C|}\displaystyle \sum_{k \in C} \bigg( \left( x \mapsto h \rhd_a g (uvx)\right)\vartriangleright f_{w_{\widehat{k}}}\bigg)(w_k)\\
        &=&\frac{1}{|B||C|} \Bigg\{\displaystyle \sum_{k \in C, \,j \in B \setminus{\{b\}} } \bigg( \left( x \mapsto h_u \vartriangleright g_{v_{\widehat{j}}x} (v_j)\right)\vartriangleright f_{w_{\widehat{k}}}\bigg)(w_k)\\
        &&+ \displaystyle \sum_{k \in C}\bigg(\left(h_u \vartriangleright g_v\right) \vartriangleright f_{w_{\widehat{k}}}\bigg)(w_k)\Bigg\}\\
        &=&\frac{1}{|C|}\sum_{k \in C}\bigg(\left(h_u \vartriangleright g_v\right) \vartriangleright f_{w_{\widehat{k}}}\bigg)(w_k).
    \end{eqnarray*}
    We therefore have
    \begin{eqnarray*}
    &&\Big(h \rhd_a(g \rhd_b f)-(h \rhd_a g) \rhd_b f-g \rhd_b(h \rhd_a f)+(g \rhd_b h) \rhd_a f\Big)(uvw)\\
    &&=\frac{1}{|C|}\displaystyle \sum_{k \in C}\bigg(h_u \vartriangleright \left(g_v \vartriangleright f_{w_{\widehat{k}}}\right)-\left(h_u \vartriangleright g_v \right)\vartriangleright f_{w_{\widehat{k}}}-g_v \vartriangleright \left(h_u \vartriangleright f_{w_{\widehat{k}}}\right)+\left(g_v \vartriangleright h_u \right)\vartriangleright f_{w_{\widehat{k}}}\bigg)(w_k)\\
    &&=0
    \end{eqnarray*}
    \noindent from the left pre-Lie identity for $\rhd$.
    \end{itemize}
    \end{proof}
    
    \ignore{
    Now we need to detail the expression \textbf{(*)}, so we'll start by calculating $\left(x \mapsto g_{xv_{\widehat{j}}} \vartriangleright f_{w_{\widehat{k}}}\right)(w_k)$.
    \begin{eqnarray*}
         g_{xv_{\widehat{j}}} \vartriangleright f_{w_{\widehat{k}}} (w_k)&=& \left(\displaystyle \sum_{p}  g_{xv_{\widehat{j}}}^{p} \partial_{p} \right) \vartriangleright  \left(\displaystyle \sum_{q} f_{w_{\widehat{k}}}^{q} \partial_{q}\right)(w_k)\\
         &=& \displaystyle \sum_{q}  \underbrace{\left(\displaystyle \sum_{p}  g_{xv_{\widehat{j}}}^{p} (w_k) \left(\partial_{p} ^{k}f_{w_{\widehat{k}}}^{q} \right)(w_k)\right) }\partial_{q}.\\
         &  & \qquad \qquad \qquad \quad F^{q}
    \end{eqnarray*}
    Then, we have 
    \begin{eqnarray*}
       \text{\textbf{(*)}} &=&  \left(\displaystyle \sum_{r} h_u^r \partial_r\right) \vartriangleright \left( \displaystyle \sum_{q} F^{q}\partial_{q}  \right)(v_j)\\
       &=& \displaystyle \sum_{q} \displaystyle \sum_{r} h_u^r (v_j)  \left(\partial_r^{x} F^{q} \right) (v_j) \partial_{q}\\
       &=& \displaystyle \sum_{q} \displaystyle \sum_{r} \displaystyle \sum_{p} h_u^r (v_j) \partial_r^{j} g_{v}^{p} (w_k) \left(\partial_{p} ^{k}f_{w_{\widehat{k}}}^{q} \right)(w_k) \partial_{q}.
    \end{eqnarray*}
    Let's now attack \textbf{($*^{'}$)}, so we'll start by calculating $\left( x \mapsto h_u \vartriangleright g_{v_{\widehat{j}}x} (v_j)\right)$
    \begin{eqnarray*}
         h_u \vartriangleright g_{v_{\widehat{j}}x} (v_j)&=& \left(\displaystyle \sum_{r} h_u^r \partial_r\right) \vartriangleright \left( \displaystyle \sum_{p} g_{v_{\widehat{j}}x}^{p}\partial_{p}  \right)(v_j)\\
         &=& \displaystyle \sum_{p} \displaystyle \sum_{r} h_u^r (v_j)\left(\partial_r^{x} g_{v_{\widehat{j}}x}^{p} \right) (v_j) \partial_{p}\\
         &=& \displaystyle \sum_{p} \underbrace{\displaystyle \sum_{r} h_u^r (v_j)\partial_r^{j} g_{v}^{p}}\partial_{p}.\\
         & & \qquad \qquad   G^{p}
    \end{eqnarray*}
    Then , we have 
    \begin{eqnarray*}
        \text{\textbf{($*^{'}$)}} &=&  \left( \displaystyle \sum_{p} G^{p} \partial_{p}  \right) \vartriangleright \left(\displaystyle \sum_{q} f_{w_{\widehat{k}}}^q\partial_q \right) (w_k)\\
        &=& \displaystyle \sum_{q} \displaystyle \sum_{p} G^{p} (w_k)\left(\partial_{p}f_{w_{\widehat{k}}}^q\right)(w_k)\partial_q \\
        &=& \displaystyle \sum_{q} \displaystyle \sum_{p} \displaystyle \sum_{r} h_u^r (v_j)\partial_r^{j} g_{v}^{p}(w_k)\left(\partial_{p}f_{w_{\widehat{k}}}^q\right)(w_k)\partial_q.
    \end{eqnarray*}
    We note that \textbf{(*)}=\textbf{($*^{'}$)}, which implies 
    \begin{eqnarray*}
        \Bigg(\left(h \rhd_a(g \rhd_b f)\right) - \left((h \rhd_a g) \rhd_b f\right) \Bigg)(uvw)&=& \displaystyle \sum_{k, k^{'} \in C, k \ne k^{'}}\Bigg(h_u \vartriangleright \left(x \mapsto g_v \rhd_b f_{w_{\widehat{k},\widehat{k^{'}} }x}\right)(w_{k^{'}})\Bigg)(w_k)\\
        & & + \displaystyle \sum_{k \in C}\Bigg(h_u \vartriangleright \left(g_v \vartriangleright f_{w_{\widehat{k}}}\right)\Bigg)(w_k) - \displaystyle \sum_{k \in C}\Bigg(\left(h_u \vartriangleright g_v\right) \vartriangleright f_{w_{\widehat{k}}}\Bigg)(w_k)\\
        &=& \displaystyle \sum_{k, k^{'} \in C, k \ne k^{'}}\Bigg(h_u \vartriangleright \left(x \mapsto g_v \rhd_b f_{w_{\widehat{k},\widehat{k^{'}} }x}\right)(w_{k^{'}})\Bigg)(w_k)\\
        & & + \displaystyle \sum_{k \in C}\left[\Bigg(h_u \vartriangleright \left(g_v \vartriangleright f_{w_{\widehat{k}}}\right)\Bigg)- \Bigg(\left(h_u \vartriangleright g_v\right) \vartriangleright f_{w_{\widehat{k}}}\Bigg)\right](w_k).
    \end{eqnarray*}
    We still need to better understand the term 
    \[S:=\sum_{k, k^{'} \in C, k \ne k^{'}}\Bigg(h_u \vartriangleright \left(x \mapsto g_v \rhd_b f_{w_{\widehat{k},\widehat{k^{'}} }x}\right)(w_{k^{'}})\Bigg)(w_k).\]
    Let us start with $\left(x \mapsto g_v \rhd_b f_{w_{\widehat{k},\widehat{k^{'}} }x}\right)(w_{k^{'}})$:
    \begin{eqnarray*}
      g_v \rhd_b f_{w_{\widehat{k},\widehat{k^{'}} }x} (w_{k^{'}})&=& \left(\displaystyle \sum_{p}g_{v}^{p}\partial_{p}\right) \vartriangleright \left(\displaystyle \sum_{q}f_{w_{\widehat{k},\widehat{k^{'}} }x} \partial_{q}\right) (w_{k^{'}})\\
      &=&\displaystyle \sum_{q}\displaystyle \sum_{p}g_{v}^{p}(w_{k^{'}})\left(\partial_{p}^{k^{'}}f_{w_{\widehat{k},\widehat{k^{'}} }x}\right)(w_{k^{'}}) \partial_{q}\\
      &=&\displaystyle \sum_{q}\underbrace{\displaystyle \sum_{p}g_{v}^{p}(w_{k^{'}})\left(\partial_{p}^{k^{'}}f_{w_{\widehat{k}}x}\right)} \partial_{q},\\
      & & \qquad \qquad \quad \text{$H^{q}$}
    \end{eqnarray*}
    which implies 
    \begin{eqnarray*}
        S&=&\displaystyle \sum_{k, k^{'} \in C, k \ne k^{'}} \left[\left(\displaystyle \sum_{r}h_u^{r}\partial_r\right)\vartriangleright \left(\displaystyle \sum_{q}H^{q}\partial_q\right) \right](w_k)\\
        &=& \displaystyle \sum_{k, k^{'} \in C, k \ne k^{'}} \left[\displaystyle \sum_{q}\displaystyle \sum_{r}h_u^{r}\left(\partial_rH^{q}\right)\partial_q\right](w_k)\\
        &=&\displaystyle \sum_{k, k^{'} \in C, k \ne k^{'}} \left[\displaystyle \sum_{q}\displaystyle \sum_{r}h_u^{r}\left(\partial_r\Bigg(\displaystyle \sum_{p}g_{v}^{p}(w_{k^{'}})\left(\partial_{p}^{k^{'}}f_{w_{\widehat{k}}}\right)\Bigg)\right)\partial_q\right](w_k)\\
        &=& \displaystyle \sum_{k, k^{'} \in C, k \ne k^{'}} \left[\displaystyle \sum_{q}\displaystyle \sum_{r}\displaystyle \sum_{p}h_u^{r}(w_k)g_{v}^{p}(w_{k^{'}})\partial_r^x\left(\partial_{p}^{k^{'}}f_{w_{\widehat{k}}x}\right) \partial_{q}\right].
    \end{eqnarray*}
    Here it is clear that $h_u$ and $g_v$ play a symmetric role in $S$. Moreover, as $\vartriangleright$ is left pre-Lie we have 
    \[\Bigg(h_u \vartriangleright \left(g_v \vartriangleright f_{w_{\widehat{k}}}\right)\Bigg)- \Bigg(\left(h_u \vartriangleright g_v\right) \vartriangleright f_{w_{\widehat{k}}}\Bigg)= \Bigg(g_v \vartriangleright \left(h_u \vartriangleright f_{w_{\widehat{k}}}\right)\Bigg)- \Bigg(\left(g_v \vartriangleright h_u\right) \vartriangleright f_{w_{\widehat{k}}}\Bigg),\]
    which yields
    \begin{eqnarray*}
        \Bigg(\left(h \rhd_a(g \rhd_b f)\right) - \left((h \rhd_a g) \rhd_b f\right) \Bigg)(uvw)&=& \displaystyle \sum_{k, k^{'} \in C, k \ne k^{'}} \left[\displaystyle \sum_{q}\displaystyle \sum_{r}\displaystyle \sum_{p}h_u^{r}(w_k)g_{v}^{p}(w_{k^{'}})\partial_r^x\left(\partial_{p}^{k^{'}}f_{w_{\widehat{k}}x}\right) \partial_{q}\right]\\
        & & + \displaystyle \sum_{k \in C}\left[\Bigg(h_u \vartriangleright \left(g_v \vartriangleright f_{w_{\widehat{k}}}\right)\Bigg)- \Bigg(\left(h_u \vartriangleright g_v\right) \vartriangleright f_{w_{\widehat{k}}}\Bigg)\right](w_k)\\
        &=&\displaystyle \sum_{k, k^{'} \in C, k \ne k^{'}} \left[\displaystyle \sum_{q}\displaystyle \sum_{r}\displaystyle \sum_{p}h_u^{r}(w_k)g_{v}^{p}(w_{k^{'}})\partial_r^x\left(\partial_{p}^{k^{'}}f_{w_{\widehat{k}}x}\right) \partial_{q}\right]\\
        & & + \displaystyle \sum_{k \in C}\left[ \Bigg(g_v \vartriangleright \left(h_u \vartriangleright f_{w_{\widehat{k}}}\right)\Bigg)- \Bigg(\left(g_v \vartriangleright h_u\right) \vartriangleright f_{w_{\widehat{k}}}\Bigg)\right](w_k)\\
        &=& \Bigg(\left(g \rhd_b(h \rhd_a f)\right) - \left((g \rhd_b h) \rhd_a f\right) \Bigg)(uvw),
    \end{eqnarray*}
    which ends up the proof of (NPL).
\end{itemize}
\end{proof}}

Recall that the subspecies $\mop{End}(V)\subseteq \mathcal E(V)$, defined by $\mop{End}(V)[B]:=\mathcal L(V^{\otimes B},V)$ is an operad. The partial compositions are defined for any finite sets $B,C$ and any $b\in B$ by
\[g\circ_b f(vw):=g\big(vf(w)\big)\]
for any $g\in\mathcal L(V^{\otimes B},V)$ and $f\in\mathcal L(V^{\otimes C},V)$. Here $v\in V^{\otimes B\setminus \{b\}}$ and $w\in V^{\otimes C}$, so that the word $vf(w)$ belongs to $V^{\otimes (B\setminus\{b\})\sqcup \{*\}}\simeq V^{\otimes B}$.

\begin{lem}\label{NPLinclusion}
The inclusion $\mop{End}(V) \hookrightarrow \mathcal{E}(V)$ is a morphism of NPL-operads.
\end{lem}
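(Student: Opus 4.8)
The plan is to verify directly that the inclusion $\iota\colon\mop{End}(V)\hookrightarrow\mathcal E(V)$ commutes with the partial compositions; since NPL-operads carry no unitality axiom and the naturality squares for $\iota$ are trivially commutative ($\iota$ is an inclusion of subspecies), this is all that a morphism of NPL-operads requires. So I must show, for every pair of finite sets $B,C$, every $b\in B$, and all $g\in\mathcal L(V^{\otimes B},V)$, $f\in\mathcal L(V^{\otimes C},V)$, that
\[\iota(g\circ_b f)=\iota(g)\,\rhd_b\,\iota(f)\qquad\text{in }\mathcal E(V)[B\sqcup_b C].\]
Both sides are polynomial (indeed multilinear) maps $V^{\otimes B\sqcup_b C}\to V$, so it is enough to compare their values on a generic element $vw$ with $v\in V^{\otimes B\setminus\{b\}}$ and $w=(w_k)_{k\in C}\in V^{\otimes C}$.

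By definition of the endomorphism operad the left-hand side evaluated at $vw$ equals $g\big(v\,f(w)\big)$. For the right-hand side I would expand using Definition~\ref{partialNPL}:
\[\iota(g)\,\rhd_b\,\iota(f)\,(vw)=\frac1{|C|}\sum_{k\in C}\bigl(g_v\vartriangleright f_{w_{\widehat k}}\bigr)(w_k),\]
with $g_v(x)=g(vx)$ and $f_{w_{\widehat k}}(x)=f(w_{\widehat k}x)$. The key observation is that $f$ and $g$ being multilinear forces $g_v$ and each $f_{w_{\widehat k}}$ to be \emph{linear} endomorphisms of $V$; consequently their Jacobians are constant, and the pre-Lie product~\eqref{NPLpoly} of two such linear maps reduces to an honest composition of linear maps (the derivative occurring in $\vartriangleright$ simply returns the linear map itself). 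Together with the tautology $f_{w_{\widehat k}}(w_k)=f(w)$ — substituting the $k$-th letter of $w$ back into the slot that was freed recovers $f(w)$, for every $k$ — each of the $|C|$ summands is seen to equal the single value $g\big(v\,f(w)\big)$, independently of $k$. Hence the factor $\frac1{|C|}$ cancels the sum and the right-hand side is again $g\big(v\,f(w)\big)=\iota(g\circ_b f)(vw)$, which proves the lemma.

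The step I expect to be delicate is this last collapse: one must keep very careful track of which tensor factor of $V$ is playing the role of the $b$-slot of $g$, the $k$-slot of $f$, and the ``ghost'' slot $*$ used to form $g_v$ and $f_{w_{\widehat k}}$, and then check that, once the pre-Lie product has been turned into a composition of linear maps, the resulting composite is exactly the substitution $g\circ_b f$ and not a transposed or partially summed variant. This bookkeeping is precisely the content of the lemma: on multilinear arguments the averaged, ``differentiated'' operation $\rhd_b$ of $\mathcal E(V)$ degenerates to the plain operadic substitution $\circ_b$ of $\mop{End}(V)$; naturality and the absence of a unit condition make everything else automatic.
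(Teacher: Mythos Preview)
Your argument is exactly the paper's, only made explicit: the paper first remarks that for linear $\alpha,\beta\in\mathcal L(V,V)$ one has $\alpha\vartriangleright\beta=\alpha\circ\beta$, and then declares the equality $g\rhd_b f=g\circ_b f$ ``immediate from Definition~\ref{partialNPL}''. You have unpacked precisely this immediacy---the linearity of $g_v$ and $f_{w_{\widehat k}}$, the tautology $f_{w_{\widehat k}}(w_k)=f(w)$, and the cancellation of $|C|$ identical summands against the prefactor $\tfrac1{|C|}$---so the two proofs coincide.
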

\begin{proof}
Let $A,B$ be two finite sets and let $a\in A$. It is well-known, and easily seen, that for any $\alpha,\beta\in\mathcal L(V,V)$ the equality $\alpha\rhd \beta=\alpha\circ \beta$ holds. It is therefore immediate from Definition \ref{partialNPL} that, for any $g\in\mathcal L(V^{\otimes B},V)$ and $f\in\mathcal L(V^{\otimes C},V)$, the equality
\[g\rhd_b f=g\circ_b f\]
holds, where $\circ_b$ stands for the usual partial composition in $\mop{End}(V)$.
\end{proof}

\begin{definition}
    Let $V$ be a vector space and $\mathcal{P}$ be an NPL-operad.
    We say that $V$ is a $\mathcal{P}$-algebra if there exists an NPL operad morphism  $\Psi: \mathcal{P}\to \mathcal{E}(V)$.
\end{definition}
\begin{proposition}\label{NPLalgebra}
    An algebra $V$ over an operad $\tp$ is also an algebra over $\tp$ seen as an NPL operad.
\end{proposition}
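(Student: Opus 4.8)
The plan is to build the required NPL-operad morphism as a composite of two morphisms that are already at hand. Recall that, by definition, an algebra $V$ over the operad $\tp$ amounts to a morphism of operads $\phi\colon \tp\to \mathrm{End}(V)$, and recall that $\mathrm{End}(V)$ is an operad (as noted just before Lemma \ref{NPLinclusion}). Since any operad is tautologically an NPL-operad — with the very same partial compositions $\{\circ_s\}_s$, the nested pre-Lie axiom (NPL) being a consequence of nested associativity (A2) — we may view both $\tp$ and $\mathrm{End}(V)$ as NPL-operads without changing anything.

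First I would observe that $\phi$ is automatically a morphism of NPL-operads from $\tp$ to $\mathrm{End}(V)$: a morphism of operads is a morphism of species that commutes with all partial compositions $\circ_s$ (and with the units), and since the NPL-operad structures on $\tp$ and on $\mathrm{End}(V)$ have, by construction, exactly the same underlying partial compositions, the only axiom required of an NPL-operad morphism — naturality together with compatibility with $\{\circ_s\}_s$ — is already part of the data of $\phi$. No unitality needs to be checked here, since NPL-operad morphisms are not required to preserve units.

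Next I would invoke Lemma \ref{NPLinclusion}, which states that the inclusion $\iota\colon \mathrm{End}(V)\hookrightarrow \mathcal{E}(V)$ is a morphism of NPL-operads. Composing, the map
\[
\Psi:=\iota\circ\phi\colon \tp\longrightarrow \mathcal{E}(V)
\]
is a morphism of NPL-operads, being a composite of two such morphisms (composition of NPL-operad morphisms is again an NPL-operad morphism, as is immediate from the definition). By the definition of a $\mathcal P$-algebra stated just above, the existence of $\Psi$ exhibits $V$ as an algebra over $\tp$ regarded as an NPL-operad, which is exactly the claim.

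There is essentially no obstacle in this argument; the one point worth stating explicitly is that the NPL-operad structure attached to a classical operad keeps that operad's partial compositions unchanged, so that every morphism of operads is a fortiori a morphism of NPL-operads. Once this is spelled out, and with Lemma \ref{NPLinclusion} available, the passage from the operad-algebra morphism $\phi$ to the NPL-algebra morphism $\Psi$ is purely formal.
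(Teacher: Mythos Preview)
Your proof is correct and follows essentially the same approach as the paper: both define $\Psi$ as the composite of the given operad morphism $\Phi\colon \tp\to \mathrm{End}(V)$ with the inclusion $\mathrm{End}(V)\hookrightarrow \mathcal{E}(V)$ provided by Lemma~\ref{NPLinclusion}. Your version is slightly more explicit in justifying why an operad morphism is automatically an NPL-operad morphism, which the paper leaves implicit.
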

\begin{proof}
Let us recall that $V$ is an algebra over an operad $\tp$ if and only if there exists an operad morphism  $\Phi: \tp\to \text{End}(V)$. Proposition \ref{NPLalgebra} is therefore obtained from Lemma \ref{NPLinclusion} by composing this operad morphism with the canonical injection
\[\xymatrix{
\tp \ar[r]^-{\Phi} \ar[rd]_-{\Psi} &
\text{End}(V) \ar@{^{(}->}[d]\\
&\mathcal{E}(V)}\]
\end{proof}

\

\bibliographystyle{siam}
\bibliography{Bibliography}

\end{document}